\newcommand{\vf}{\varphi}
\newcommand{\sm}{\setminus}
\newcommand{\sub}{\subset}
\def\cA{{{\mathcal A}}}
\def\cF{{{\mathcal F}}}
\def\cG{{{\mathcal G}}}
\def\cH{{{\mathcal H}}}
\def\cY{{{\mathcal Y}}}
\def\cZ{{{\mathcal Z}}}
\newcommand{\graph}{\rm gr}
\newcommand{\qu}{\mathbb{Q}}
\newcommand{\btu}{\bigtriangleup}
\newcommand{\er}{\mathbb R}
\newcommand{\la}{\langle}
\newcommand{\ra}{\rangle}
\newcommand{\clop}{\protect{\rm Clop} }
\newtheorem{theo}{Theorem}[section]
\newtheorem{lem}[theo]{Lemma}%[section]
\newtheorem{cor}[theo]{Corollary}%[theo]
\newtheorem{thm}[theo]{Theorem}
\newtheorem{prop}[theo]{Proposition}%[section]
\newtheorem{defn}[theo]{Definition}%[section]
\newtheorem{prob}[theo]{Problem}
\newtheorem{lemma}[theo]{Lemma}
\theoremstyle{definition}
\newtheorem{example}[theo]{Example}
\theoremstyle{remark}
\newtheorem{remark}[theo]{Remark}
\numberwithin{equation}{section}
\def\epsilon{\varepsilon}
\newcommand{\To}{\longrightarrow}
\providecommand{\MR}{\relax\ifhmode\unskip\space\fi MR }
\providecommand{\href}[2]{#2}
\title[Disconnected images of connected spaces]{On disconnected  images of connected spaces}
\author[A.\ Avil\'es]{ Antonio Avil\'{e}s}
\address{Departamento de Matem\'{a}ticas\\
Facultad de Matem\'{a}ticas\\ Universidad de Murcia\\ 30100 Espinardo (Murcia)\\
Spain} \email{avileslo@um.es}
\author[G.\ Plebanek]{Grzegorz Plebanek}
\address{Instytut Matematyczny\\ Uniwersytet Wroc\l awski\\ Pl.\ Grunwaldzki 2/4\\
50-384 Wroc\-\l aw\\ Poland} \email{grzes@math.uni.wroc.pl}
\thanks{The first author supported by MINECO and FEDER (MTM2014-54182-P). Both authors supported by Fundaci\'{o}n S\'{e}neca - Regi\'{o}n de Murcia (19275/PI/14). The second author supported  by the Polish National Science Center research grant NCN grant 2013/11/B/ST1/03596 (2014-2017).}
\subjclass[2010]{Primary 03G05, 06E15, 54D05.}
\begin{document}

\begin{abstract}
We introduce the notion that a zero-dimensional compact space $L$ is a \emph{Boolean image} of an arbitrary compact space $K$. When $K$ is also zero-dimensional, this just means that $L$ is a continuous image of $K$. However,  a number of interesting questions arise when we consider connected compacta $K$.
\end{abstract}

\maketitle

\section{Introduction}

A {\em continuous} image of a connected space is clearly connected but we shall consider here the concept of a {\em Boolean image} that allows to loose connectedness.
 A pseudoclopen in a compact space $K$ is a pair $a = (a^-,a^+)$ such that $a^-$ is closed subset of $K$, $a^+$ is an open subset of $K$ and $a^-\subset a^+$. Pseudoclopens are also called \emph{regular pairs} and serve as a substitute of clopen sets when we work in spaces which are not totally disconnected. If $c$ is a clopen set, then $(c,c)$ is a pseudoclopen, and when we are in a zero-dimensional compact space this is, in a sense, the end of the story because given any pseudoclopen $a$ there exists a clopen $c$ such that $a^- \subset c \subset a^+$. We may think of a a pseudoclopen $a$ as an approximation to an imaginary clopen $c$ for which the only thing we know is that $a^- \subset c \subset a^+$. Even if we are not sure what these imaginary clopens are, we can be sure about some statements that we can make about them. Given pseudoclopens $a_1,\ldots,a_n$ and a logical formula $F(a_1,\ldots,a_n)$, the statement
 $${}^\star [F(a_1,\ldots,a_n)]$$
 means that the statement $F(c_1,\ldots,c_n)$ would hold for any arbitrary sets such that $a_i^- \subset c_i \subset a_i^+$. For example, ${}^\star [a\subset b]$ means that $a^+ \subset b^-$, while ${}^\star [a\neq b]$ means that either $a^-\setminus b^+ \neq\emptyset$ or $b^-\setminus a^+ \neq \emptyset$.  Our work has been motivated by the following result of Brech and Koszmider, which was used in \cite{BreKos} as  an instrumental tool to provide some examples in the theory of Banach spaces of continuous functions.

\begin{thm}[Brech and Koszmider] \label{BK}
If $K$ is  compact convex subset of $\mathbb{R}^\Gamma$, then it is impossible to find pseudoclopens $\{a_\alpha : \alpha<\omega_1\}$ in $K$ such that $^\star[a_\alpha \subset a_\beta]$ whenever $\alpha<\beta$.
\end{thm}

Another result of Koszmider \cite{Kos} can be interpreted in this language:

\begin{thm}[Koszmider] \label{nouncountable}
It is relatively consistent that  there exists a nonmetrizable compact space $K$ which contains no uncountable set of pseudoclopens satifying $^\star [a\neq b]$ whenever $a\neq b$.
\end{thm}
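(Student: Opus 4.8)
The plan is to build $K$ under Jensen's $\diamondsuit$ (so that the consistency relative to $\mathrm{ZFC}$ follows from the constructible universe), as the limit $K=\varprojlim_{\alpha<\omega_1}K_\alpha$ of a continuous inverse system of metrizable compacta with surjective bonding maps $\pi_{\beta\alpha}\colon K_\beta\to K_\alpha$, with limit projections $\pi_\alpha\colon K\to K_\alpha$; then $w(K)\le\omega_1$ automatically. During the recursion one keeps the system from stabilising --- cofinally often $\pi_{\alpha+1,\alpha}$ is chosen so as to put a norm-one function into $C(K_{\alpha+1})$ at distance at least $\tfrac12$ from everything pulled back from earlier stages --- so that $C(K)$ is nonseparable and $K$ nonmetrizable.

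The reduction driving the construction is a reflection phenomenon. Any pseudoclopen $a=(a^-,a^+)$ of $K$ reflects to a bounded stage: $a^-$ and $K\setminus a^+$ are disjoint compacta, so there is $\alpha$ with $\pi_\alpha(a^-)\cap\pi_\alpha(K\setminus a^+)=\emptyset$, and then $b=(\pi_\alpha(a^-),\,K_\alpha\setminus\pi_\alpha(K\setminus a^+))$ is a pseudoclopen of $K_\alpha$ with $a^-\subseteq\pi_\alpha^{-1}(b^-)$ and $\pi_\alpha^{-1}(b^+)\subseteq a^+$; if two pseudoclopens reflect at a common stage to the same $b$ they are not $^\star$different, since $a_1^-\subseteq\pi_\alpha^{-1}(b^-)\subseteq\pi_\alpha^{-1}(b^+)\subseteq a_2^+$ and symmetrically. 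A metrizable compactum carries no uncountable $^\star$different family (its second countability provides a countable supply of such $b$'s, and one applies the pigeonhole principle), so by Fodor's lemma any uncountable $^\star$different family in $K$ must have unbounded reflection stages, and in fact stationarily many members reflect no earlier than their own index. Thus it suffices to build $K$ so that, at each stage, the pseudoclopens that genuinely ``live'' there remain controlled by countably much data.

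The recursion uses $\diamondsuit$ to anticipate threats: at stage $\alpha$ it proposes closed sets $F,F'\subseteq K_\alpha$ to be read as a candidate for $(\pi_\alpha(a^-),\pi_\alpha(K\setminus a^+))$ of a pseudoclopen $a$ of $K$ that would escape control. If the candidate is honest at stage $\alpha$ (i.e.\ $F\cap F'=\emptyset$) there is nothing to do; if $Z:=F\cap F'\neq\emptyset$ we \emph{freeze} $Z$: all later bonding maps $\pi_{\beta\alpha}$, $\beta>\alpha$, are required to be injective over $Z$. Then no pseudoclopen of $K$ can have $Z\subseteq\pi_\alpha(a^-)$ and $Z\subseteq\pi_\alpha(K\setminus a^+)$, because over the frozen $Z$ the images of $a^-$ and of $K\setminus a^+$ are both forced to be all of $Z$, so $a^-\cap(K\setminus a^+)\neq\emptyset$ --- contradicting that $a$ is a pseudoclopen. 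The standard reflection argument (every pseudoclopen, and every $\omega_1$-indexed family of them, is described correctly at club-many stages, a $^\star$different family reflects to a $^\star$different one) ensures that any genuine uncountable $^\star$different family contains a member caught in this way, so the family is destroyed; at all other stages one performs a weight-increasing extension over a part of $K_\alpha$ not yet frozen.

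The step I expect to be the real obstacle is the remaining case together with the weight requirement: freezing as above only handles members that reflect \emph{strictly later} than their index, so one must also arrange that the new pseudoclopens appearing at each step are themselves tightly controlled --- equivalently, that a member reflecting \emph{exactly} at its index can still be caught --- and this must be done while still being free to increase the weight cofinally often. The naive weight-increasing extensions (multiplying by $[0,1]$, or blowing up a point to an arc) at once create pseudoclopens cutting through a fibre, whose reflection data is fresh and uncontrolled, which would reinstate an uncountable $^\star$different family. So the construction needs a carefully engineered class of bonding maps with non-degenerate fibres through which no pseudoclopen of the inverse limit can squeeze, and a verification that the accumulated frozen regions never exhaust the space; this is the technical heart, requiring a delicate choice of the continua glued in and bookkeeping that keeps successive freezings and weight-increasing steps on disjoint parts of the coordinate structure. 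Everything else --- metrizability of the $K_{\alpha+1}$, continuity at limits, the $\diamondsuit$-bookkeeping --- is routine.
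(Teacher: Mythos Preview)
The paper does not prove this theorem; it merely quotes it from Koszmider \cite{Kos} as background motivation, so there is no ``paper's own proof'' to compare your attempt against.

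As for your sketch itself: the general architecture --- an $\omega_1$-length inverse system of metrizable compacta, reflection of pseudoclopens to a bounded stage, and a $\diamondsuit$-guided sealing argument --- is the right genre for such a result, and your reflection lemma (two pseudoclopens reflecting to the same $b$ at a common stage are not ${}^\star$different) is correct. However, the sketch is, as you yourself flag, not a proof. Two points deserve emphasis. First, the ``freezing'' step as written kills one prospective pseudoclopen $a$ whose $\alpha$-level data was guessed; this does not by itself kill an uncountable ${}^\star$different family, since removing a single member (or even stationarily many) from an $\omega_1$-family leaves an $\omega_1$-family. What one actually needs at the guessed stage is an amalgamation move that makes \emph{two} of the predicted pseudoclopens not ${}^\star$different in the limit, and the $\diamondsuit$-coding has to capture the whole family, not a single member. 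Second, the tension you identify --- increasing weight cofinally often while preventing any bonding map from creating a fresh supply of pseudoclopens cutting through the new fibres --- is exactly the substantive difficulty, and nothing in the outline resolves it; the phrase ``carefully engineered class of bonding maps'' is a placeholder, not an argument. In Koszmider's construction the fibres are engineered (connected, with specific shape) so that the Urysohn-type functions witnessing ${}^\star$difference are forced to agree with earlier-stage data, but making this precise is the content of the theorem.
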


These results suggest a deeper analysis of what one can say about the structure of pseudoclopens in connected compact spaces, that we shall try to initiate in this paper. First, remember that a Boolean polynomial $P(x_1,\ldots,x_n)$ is any algebraic expression of the variables $x_1,\ldots,x_n$ using the Boolean set operations of union, intersection and complementation.

\begin{lem}
	If $a_1,\ldots,a_n$ are pseudoclopens of $K$ and $P(x_1,\ldots,x_n)$ is a Boolean polynomial then $^\star\left[ P(a_1,\ldots,a_n) = \emptyset \right]$ holds if and only if $P(a_1^\pm,\ldots,a_n^\pm) = \emptyset$ holds for any choice of signs.
\end{lem}

\begin{proof}
	For the nonobvious implication, suppose that $p\in P(c_1,\ldots,c_n)$ with $a^-_i\subset c_i \subset a_i^+$. For each $i$ choose sign $\varepsilon_i = +$ if $p\in c_i$ and $\varepsilon_i = -$ if $p\not\in c_i$. Then $p\in P(a_1^{\varepsilon_1},\ldots,a_n^{\varepsilon_n})$, because, the restriction of $c_i$ and $a_i^{\varepsilon_i}$ to the algebra $\{\emptyset,\{p\}\}$ is the same.
\end{proof}

 An isomorphism between two families of pseudoclopens $\cG$ and $\cG'$ is a bijection $\phi:\cG\To \cG'$ such that, for any Boolean polynomial $P(x_1,\ldots,x_n)$ we have that
$${}^\star\left[ P(a_1,\ldots,a_n) = \emptyset\right] \iff {}^\star\left[ P(\phi(a_1),\ldots,\phi(a_n))=\emptyset\right],$$
for any distinct $a_1,\ldots, a_n\in\cG$. For this definition we can restrict just to atomic polynomials $P$ (those who involve only intersection and complementation) because any Boolean polynomial can be written as union of atomic ones.

If $L$ is a compact zero-dimensional space then we write $\clop(L)$ for the algebra of clopen subsets of $L$. We are primarily interested in investigating which families $\cG\sub\clop(L)$ of clopen sets are isomorphic to families of pseudoclopens in another
compact space $K$, which is connected or has some stronger properties.  Our definition of isomorphism in this particular case reads as follows:

\begin{defn}
Given a family of sets $\cG$ and a bijection $\phi$ from $\cG$ onto a family of pseudoclopens of a compact space $K$, we say that $\phi$ is an isomorphism if
for every $m,n$ and any distinct $a_1,\ldots, a_n, b_1,\ldots b_m\in\cG$
$$\mbox{ if } \bigcap_{i\le n} a_i\sm \bigcup_{j\le m} b_j=\emptyset \mbox{ then } \bigcap_{i\le n} \phi(a_i)^+\sm \bigcup_{j\le m} \phi(b_j)^-=\emptyset;$$
$$\mbox{ if }   \bigcap_{i\le n} \phi(a_i)^-\sm \bigcup_{j\le m} \phi(b_j)^+=\emptyset \mbox{ then }
\bigcap_{i\le n} a_i\sm \bigcup_{j\le m} b_j=\emptyset.$$
\end{defn}

If $P(x_1,\ldots, x_n)$ is a Boolean polynomial and $a_1,\ldots, a_n$ are pseudoclopens then we can define a pseudoclopen
$$P(a_1,\ldots, a_n)=\big( P(a_1,\ldots, a_n)^-, P(a_1,\ldots, a_n)^+\big),$$
where $P(a_1,\ldots,a_n)^- = \bigcap P(a_1^\pm,\ldots,a_n^\pm)$ and $P(a_1,\ldots,a_n) = \bigcup P(a_1^\pm,\ldots,a_n^\pm)$ are, respectivelye, the intersection and union of all evaluations for all possible choices of signs. These are the least and the largest value that we could obtain when applying the polyonomial $P$ to our imaginary clopens. We can also compute this through the recursive formulas $(P\wedge Q)^\varepsilon = P^\varepsilon \wedge Q^\varepsilon$, $(P\vee Q)^\varepsilon = P^\varepsilon \vee Q^\varepsilon$, and $(P^c)^\varepsilon = (P^{-\varepsilon})^c$. With this notation at hand, we can express that
$\phi$ in the definition above is an isomorphism if the equivalences
$$P(a_1,\ldots, a_n)=\emptyset \iff P(\phi(a_1),\ldots, \phi(a_n))^+=\emptyset \iff P(\phi(a_1),\ldots, \phi(a_n))^-=\emptyset.$$
hold for any distinct $a_1,\ldots, a_n\in\cG$.

Note that here it is not essential that $\cG$ is a family of clopen sets in a compact space. We can as well consider any family  $\cG$ in some Boolean algebra $\mathfrak A$,
and treat $a\in\cG$ as a corresponding clopen set in the Stone space of $\mathfrak A$.

A subfamily $\mathcal{G}$ of $\clop(L)$  separates the points of $L$ if for every two different points $x,y\in L$ there exist  $a\in L$ such that $|\{x,y\}\cap a| = 1$.
The next definition introduces the concept that is basic for our considerations.

\begin{defn}
We say that a compact zero-dimensional space $L$ is a Boolean image of a compact space $K$
if there is a family of clopens of $L$ that separates the points of $L$ and which is isomorphic
to a family of pseudoclopens of $K$.
\end{defn}

In the sequel, $K$ and $L$ (with possible indices) always denote compact Hausdorff spaces.
Notice that the concept of a Boolean image generalizes the usual notion of a continuous image.

\begin{remark}\label{basic_remark}
Let  the compact space $K$ be zero-dimensional.  Then $L$ is a Boolean image of $K$ if and only if $L$ is a continuous image of $K$.
\end{remark}

\begin{proof}
If $g:K\to L$ is a continuous surjection then clearly the operation $c\to g^{-1}(c)$ is an isomorphism between $\clop(L)$ and a subalgebra of $\clop(K)$.

If $\cG\sub\clop(L)$ separates the points of $L$ and $\vf:\cG\to \cG'$ establishes   an isomorphism with a family $\cG'$ of pseudoclopens in $K$ then, since  $K$ is zero-dimensional,
we can assume that in fact $\cG'\sub\clop(K)$. Given $x\in K$, the family $\{c\in \cG: x\in\vf(c)\}$ is centred and its intersection must be a singleton, say $\{g(x)\}$.
It is easy to check that this defines a continuous surjection $g:K\to L$.
\end{proof}

In a sense, we form the notion of a Boolean image
by twisting the definition of continuous image, so as to possibly have disconnected images of connected spaces.
For instance, Theorem~\ref{nouncountable} can be now restated by saying that, consistently, there exists a compact nonmetrizable space such that all its Boolean images are metrizable. Another result from the literature that can be read in this manner is a theorem on the space
$$B(\Gamma) = \{x\in \mathbb{R}^\Gamma : \sum_{\gamma\in \Gamma}|x_\gamma|\leq 1\},$$
 stating that {\em no product of more than one nonmetrizable spaces can be a Boolean image of $B(\Gamma)$}, see \cite{donotmap}.

One of our main results presented here offers  an essential improvement of Theorem~\ref{BK} and reads as follows.

\begin{thm}\label{alternative}
If $K$ is separably connected and $L$ is a Boolean image of $K$, then either $L$ is Corson compact or $L$ maps continuously onto $2^{\omega_1}$.
\end{thm}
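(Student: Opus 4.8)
The plan is to assume that $L$ is not Corson compact and produce a continuous surjection $L\to 2^{\omega_1}$. Since $L$ is zero-dimensional, non-Corsonness should be detected through combinatorial behaviour of the clopen algebra $\clop(L)$. The classical Argyros--Mercourakis--Negrepontis / Todor\v{c}evi\'c characterization tells us that a compact space fails to be Corson exactly when it carries (in some equivalent form) an uncountable ``free sequence''-like structure; in the zero-dimensional setting the cleanest incarnation is that $\clop(L)$ contains either an uncountable independent family or an uncountable chain $\{c_\alpha:\alpha<\omega_1\}$ of clopen sets (more precisely, by the standard dichotomy for Boolean algebras one reduces the general case to these two). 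If $\clop(L)$ contains an uncountable independent family, then $L$ already maps onto $2^{\omega_1}$ by the usual evaluation map, and we are done; so the substantive case is that of an uncountable chain. Thus I would first isolate the statement: \emph{if $L$ is a zero-dimensional Boolean image of a separably connected $K$ and $\clop(L)$ contains an uncountable chain, then $L$ maps onto $2^{\omega_1}$}, and show that this, together with the independent-family case and the Argyros--Todor\v{c}evi\'c dichotomy, yields the theorem.

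Now let $\cG\sub\clop(L)$ be a point-separating family and $\vf:\cG\to\cG'$ an isomorphism onto a family of pseudoclopens of $K$, and suppose $\{c_\alpha:\alpha<\omega_1\}\sub\clop(L)$ is a strictly increasing chain. Replacing $\cG$ by $\cG$ together with finitely-many Boolean operations does not affect being a Boolean image (Boolean combinations of pseudoclopens are pseudoclopens, and the isomorphism condition is stated for all Boolean polynomials), so we may assume each $c_\alpha\in\cG$; write $a_\alpha=\vf(c_\alpha)$, a pseudoclopen in $K$. The inclusions $c_\alpha\subsetneq c_\beta$ for $\alpha<\beta$ translate, via the isomorphism, into $a_\alpha^+\sub a_\beta^-$ and $a_\beta^-\sm a_\alpha^+\neq\emptyset$ — that is, $\{a_\alpha\}$ is an $\omega_1$-${}^\star$chain of pseudoclopens in $K$. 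Here is where separable connectedness must be used to derive a contradiction with the chain being ``too long and too strict'' in a way that is incompatible with $L$ \emph{not} mapping onto $2^{\omega_1}$: the key point is to locate, for uncountably many $\alpha$, a separably connected (in particular connected) subset $S_\alpha\sub K$ which meets both $a_\alpha^-$ and $K\sm a_{\alpha}^+$, hence is not contained in and does not avoid the pseudoclopen; a connected set cannot be split by a clopen set, and pseudoclopens are exactly the obstruction-free version of this, so controlling how $S_\alpha$ sits relative to the whole chain forces the ${}^\star$chain to ``stabilize'' on a tail unless the witnessing points spread out enough to build $2^{\omega_1}$. Concretely, I expect the argument to run: by separable connectedness pick a countable dense-connected witness through a carefully chosen pair of points $x_\alpha\in a_\alpha^-$, $y_\alpha\in K\sm a_{\alpha+1}^+$; a $\Delta$-system / Ramsey argument on these countable sets and on the finite ``traces'' of the chain produces an uncountable subfamily in general position, and the general-position configuration is exactly the hypothesis of the evaluation map onto $2^{\omega_1}$ pulled back through $\vf$ to $L$.

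The main obstacle, and the heart of the proof, is the middle step: extracting from an $\omega_1$-${}^\star$chain in a \emph{separably connected} $K$ the uncountable ``independent-like'' configuration that maps onto $2^{\omega_1}$ — equivalently, showing that separable connectedness upgrades Theorem~\ref{BK}'s non-existence conclusion into the dichotomy. Brech--Koszmider rules out such chains in compact convex subsets of $\mathbb{R}^\Gamma$ by a barycenter/averaging argument; separably connected spaces have no linear structure, so that argument is unavailable and must be replaced by a purely topological one. I anticipate needing: (i) a lemma that in a separably connected space, a strictly-increasing $\omega_1$-chain of pseudoclopens has, on an uncountable subset, a ``spreading'' family of connecting arcs (countable connected sets) whose combinatorics, by a pressing-down argument on the club of countable elementary submodels, is independent; (ii) translating this back through the isomorphism to get an independent family in $\clop(L)$ on the relevant index set — note this step is delicate because the isomorphism only preserves Boolean \emph{emptiness} relations, so I must phrase the independence purely in terms of non-empty Boolean combinations, which is fine; (iii) concluding $L\twoheadrightarrow 2^{\omega_1}$ from an uncountable independent family in $\clop(L)$. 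Steps (ii)--(iii) are routine; step (i), the separably-connected geometry, is where the real work lies and where I would spend most of the effort.
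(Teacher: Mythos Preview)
Your approach has two real gaps. First, the dichotomy you invoke at the outset --- that failure of Corson compactness for a zero-dimensional $L$ forces either an uncountable independent family or an uncountable chain in $\clop(L)$ --- is not a theorem of Argyros--Mercourakis--Negrepontis or Todor\v{c}evi\'c, and you give no argument for it; without it the entire reduction to the chain case collapses. Second, and more structurally, you cannot simply assume the chain $\{c_\alpha\}$ lies in $\cG$. The isomorphism $\vf$ is defined only on the given point-separating family $\cG$, and the natural extension to Boolean combinations does \emph{not} preserve the isomorphism property: if $a_1,a_2\in\cG$ are disjoint and $c=a_1\cup a_2$, then $a_1\sm c=\emptyset$ would force $\vf(a_1)^+\sub\vf(c)^-=\vf(a_1)^-\cup\vf(a_2)^-$; since $\vf(a_1)^+\cap\vf(a_2)^-\sub\vf(a_1)^+\cap\vf(a_2)^+=\emptyset$, this yields $\vf(a_1)^+\sub\vf(a_1)^-$, which fails whenever $\vf(a_1)$ is a genuine pseudoclopen. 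So an uncountable chain in $\clop(L)$ does not hand you an $\omega_1$-${}^\star$chain of pseudoclopens in $K$, and the Brech--Koszmider style argument cannot start.

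The paper's proof avoids both issues by never leaving $\cG$ and by producing the dichotomy internally rather than importing one. The key lemma (Theorem~\ref{decxy}) shows that for \emph{any} pair $x,y\in L$, the family $\cG(x,y)=\{a\in\cG:|a\cap\{x,y\}|=1\}$ splits into countably many pieces each of which is independent; separable connectedness is used exactly once, to pass a separable connected set $S\sub K$ through the $K$-side witnesses of $x$ and $y$ and let a countable dense subset of $S$ index the pieces. The theorem then follows in two lines: if every $\cG(x,y)$ is countable, the map $z\mapsto(1_a(z)-1_a(x))_{a\in\cG}$ embeds $L$ into $\Sigma(\er^{\cG})$ and $L$ is Corson; if some $\cG(x,y)$ is uncountable, one of its countably many independent pieces is uncountable and evaluation sends $L$ onto $2^{\omega_1}$. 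No external characterization of Corson compacta, no chains, no $\Delta$-systems, and no pressing-down are needed.
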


Here we say  that $K$ is {\em separably connected} if every two points of $K$ are contained in a connected separable subspace of $K$.
Note that every path-connected space and hence every convex set in a topological vector space is separably connected.
A compact space $L$ is Corson compact if, for some $\Gamma$, it is homeomorphic to a subset of the $\Sigma$-product
$$\Sigma(\mathbb{R}^\Gamma) = \{x\in\mathbb{R}^\Gamma : |\{\gamma : x_\gamma\neq 0\}| \leq \aleph_0\}.$$
The ordinal interval compact space $[0,\omega_1]$ is neither Corson compact nor does it map onto $2^{\omega_1}$; on the other hand,
 it has an $\omega_1$-chain of clopens that separates its points. Hence, Theorem~\ref{BK} follows from Theorem~\ref{alternative}. This result as well as some more technical results and interesting corollaries about the structure of pseudoclopens in separably connected spaces are presented in Section~\ref{secsepcon}.

In Section~\ref{secconnected} we try to understand which zero-dimensional compact spaces are Boolean images of compact connected spaces.
 The main result is that a family of sets $\cG$  happens to be isomorphic to a family of pseudoclopens of a connected space if and only if for every finite subfamily of $\cG$, the graph of nonempty atomic intersections is connected.
 Consequently, such a family $\cG$ must be strongly irredundant; in particular, no $a\in\cG$ can be generated from $\cG\sm\{a\}$.
 It follows that $F$-spaces and nonmetrizable scattered spaces of height 3 are not Boolean images of connected compacta.
 On the positive side, we prove that every zero-dimensional compact line is a Boolean image of a connected compactum.
 This yields an example of zero-dimensional compact space  that is a Boolean  image of a connected compactum
 but is not a Boolean image of a separably connected compact space.

Finally, in Section~\ref{secconvex} we study Boolean images of convex compacta. Theorem \ref{decconvex} shows that if $L$ is such an image then
$\clop(L)$ has a generating family with special structural properties. We give an example of a zero-dimensional space that is a Boolean image of a path-connected compactum but not of a compact convex set.

We are very grateful to the referee for his/her careful reading and several critical comments.

\section{Some basic facts}

We consider throughout this section  a compact zero-dimensional space $L$ and  an arbitrary compact  space $K$.

\begin{prop}\label{preimage}
If $L$ is a Boolean image of $K$, and $K$ is a continuous image of $K_0$, then $L$ is a Boolean image of $K_0$. In particular, if $L$ is a Boolean image of $K$, then $L$ is a continuous image of every zero-dimensional compact space that maps continuously onto the space $K$.
\end{prop}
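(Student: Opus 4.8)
The plan is to unwind the definitions. Suppose $\phi:\cG\to\cG'$ is an isomorphism, where $\cG\sub\clop(L)$ separates the points of $L$ and $\cG'$ is a family of pseudoclopens of $K$. Let $h:K_0\to K$ be a continuous surjection. The obvious candidate for witnessing that $L$ is a Boolean image of $K_0$ is the composition $\psi:\cG\to \{\text{pseudoclopens of }K_0\}$ given by
$$\psi(a)=\big(h^{-1}(\phi(a)^-),\,h^{-1}(\phi(a)^+)\big).$$
First I would check that each $\psi(a)$ really is a pseudoclopen of $K_0$: since $h$ is continuous, $h^{-1}(\phi(a)^-)$ is closed, $h^{-1}(\phi(a)^+)$ is open, and the inclusion $\phi(a)^-\sub\phi(a)^+$ is preserved by $h^{-1}$. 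So $\psi$ takes values in pseudoclopens of $K_0$, and it is injective because $\phi$ is (and $h$ is surjective, so $h^{-1}$ is injective on subsets). The family $\cG$ separating the points of $L$ is unchanged, so all that remains is to verify that $\psi$ is again an isomorphism.

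Next I would verify the two implications in the definition of isomorphism for $\psi$. The key observation is that preimage commutes with finite intersections and unions, so for distinct $a_1,\dots,a_n,b_1,\dots,b_m\in\cG$,
$$\bigcap_{i\le n}\psi(a_i)^+\sm\bigcup_{j\le m}\psi(b_j)^-
= h^{-1}\Big(\bigcap_{i\le n}\phi(a_i)^+\sm\bigcup_{j\le m}\phi(b_j)^-\Big),$$
and similarly with all the $+$ and $-$ superscripts swapped. Since $h$ is onto, $h^{-1}(S)=\emptyset$ if and only if $S=\emptyset$; hence the emptiness of the left-hand set is equivalent to the emptiness of the set $\bigcap_{i\le n}\phi(a_i)^+\sm\bigcup_{j\le m}\phi(b_j)^-$. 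The two required implications for $\psi$ therefore reduce verbatim to the corresponding implications for $\phi$, which hold by hypothesis. This proves that $L$ is a Boolean image of $K_0$.

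For the second sentence of the statement, apply the first part with $K_0$ an arbitrary zero-dimensional compact space mapping continuously onto $K$: then $L$ is a Boolean image of $K_0$, and since $K_0$ is zero-dimensional, Remark~\ref{basic_remark} gives that $L$ is a continuous image of $K_0$. I do not expect any genuine obstacle here; the only point requiring a little care is the bookkeeping that preimage under $h$ commutes with the Boolean operations and, crucially, that surjectivity of $h$ is what lets one pull the emptiness conditions back and forth — without surjectivity the forward direction ($S=\emptyset\Rightarrow h^{-1}(S)=\emptyset$) would still hold but the reverse could fail, breaking the symmetry between the two clauses of the definition.
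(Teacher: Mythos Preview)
Your proof is correct and follows exactly the same approach as the paper's own argument: pull back the pseudoclopens along the continuous surjection $h$, use that preimage commutes with finite Boolean operations and that surjectivity makes $h^{-1}(S)=\emptyset$ equivalent to $S=\emptyset$, and then invoke Remark~\ref{basic_remark} for the second sentence. The paper's proof is simply more terse, omitting the verification of the isomorphism conditions that you spell out.
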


\begin{proof}
If $f:K_0\To K$ is the continuous onto map and $\{(a_i^-,a_i^+) : i\in I\}$ is a family of pseudoclopens of $K$ that is isomorphic to a point-separating subfamily of
$\clop(L)$,  then simply consider the family $\{(f^{-1}[a_i^-],f^{-1}[a_i^+]) : i\in I\}$
of pseudoclopens in $K_0$. The second statement follows from Remark \ref{basic_remark}.
\end{proof}

We do not know if the converse of the last assertion holds true, see
Problem \ref{bf:problems}.

\begin{prop}\label{cios}
If $L$ is a Boolean image of $K$, then $L$ is a continuous image of a closed subspace of $K$.
\end{prop}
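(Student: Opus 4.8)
The plan is to carve out of $K$ the closed subspace on which every pseudoclopen from the image family becomes genuinely clopen, and then recover a surjection onto $L$ from the hypothesis that the family separates points. So let $\cG\sub\clop(L)$ be point-separating and let $\vf$ be an isomorphism of $\cG$ onto a family of pseudoclopens of $K$; write $\vf(c)=(a_c^-,a_c^+)$ for $c\in\cG$. Put
$$K'=\bigcap_{c\in\cG}\bigl(a_c^-\cup(K\sm a_c^+)\bigr),$$
a closed, hence compact, subset of $K$. The point of the definition is that for $y\in K'$ one has $y\in a_c^-\iff y\in a_c^+$ for every $c$, so each $a_c$ restricts to the clopen set $a_c^-\cap K'=a_c^+\cap K'$ of $K'$. (Note that zero-dimensionality of $K$ is not used anywhere.)

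I would then define $g\colon K'\to L$ by letting, for $y\in K'$, $g(y)$ be the point determined by the requirement $g(y)\in c$ for all $c$ with $y\in a_c^-$ and $g(y)\notin c$ for all $c$ with $y\notin a_c^-$. There is at most one such point because $\cG$ separates the points of $L$. There is at least one by a compactness argument in $L$: it suffices to check that for finite families $c_1,\ldots,c_n$ with $y\in a_{c_i}^-$ and $d_1,\ldots,d_m$ with $y\notin a_{d_j}^-$ one has $\bigcap_i c_i\sm\bigcup_j d_j\neq\emptyset$, and if this set were empty, the first part of the definition of isomorphism would force $\bigcap_i a_{c_i}^+\sm\bigcup_j a_{d_j}^-=\emptyset$ --- impossible, since $y$ lies in every $a_{c_i}^+$ (as $a_{c_i}^-\sub a_{c_i}^+$) and, being in $K'$, in none of the $a_{d_j}^-$.

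For surjectivity, fix $x\in L$ and set $W=\{c\in\cG:x\in c\}$. The set $\bigcap_{c\in W}a_c^-\cap\bigcap_{d\notin W}(K\sm a_d^+)$ is an intersection of closed subsets of $K$, and it is nonempty: by compactness of $K$ it is enough that each finite subintersection $\bigcap_i a_{c_i}^-\sm\bigcup_j a_{d_j}^+$ (with $c_1,\ldots,c_n\in W$, $d_1,\ldots,d_m\notin W$) be nonempty, and were such a set empty, the second part of the definition of isomorphism would yield $\bigcap_i c_i\sm\bigcup_j d_j=\emptyset$ in $L$, contradicting the fact that $x$ lies in every $c_i$ and in no $d_j$. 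Any $y$ in this intersection belongs to $K'$ and satisfies $\{c:y\in a_c^-\}=W$, so $g(y)=x$.

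Finally, $g$ is continuous: for $c\in\cG$ one checks $g(y)\in c\iff y\in a_c^-$, whence $g^{-1}(c)=K'\cap a_c^-=K'\cap a_c^+$ is clopen in $K'$; therefore $y\mapsto(\chi_{a_c^-}(y))_{c\in\cG}$ is a continuous map $K'\to\{0,1\}^{\cG}$, and since it factors as $g$ followed by the topological embedding $L\hra\{0,1\}^{\cG}$, $x\mapsto(\chi_c(x))_{c\in\cG}$ (an embedding precisely because $\cG$ separates points and $L$ is compact), the map $g$ is continuous. Then $L=g(K')$ exhibits $L$ as a continuous image of the closed subspace $K'$ of $K$. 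I do not foresee a real obstacle: the substance lies in the two applications of compactness together with the isomorphism relations, and the only thing one has to spot is the correct subspace $K'$, obtained by discarding exactly the ambiguous zone $a_c^+\sm a_c^-$ of each pseudoclopen.
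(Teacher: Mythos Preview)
Your proof is correct and follows essentially the same route as the paper's: the closed subspace $K'$ is exactly the paper's $K_0=\bigcap_{a\in\cG}(\phi(a)^-\cup(K\sm\phi(a)^+))$, your map $g$ coincides with the paper's $f$, and the well-definedness, continuity and surjectivity arguments invoke compactness together with the two halves of the isomorphism definition in the same way. The only cosmetic difference is that you verify continuity by factoring through the embedding $L\hra\{0,1\}^{\cG}$, whereas the paper observes that $\cG$ generates $\clop(L)$ and hence every clopen pulls back to a clopen; these are equivalent observations.
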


\begin{proof}
Let $\phi:\mathcal{G}\To \mathcal{F}$ be an isomorphism of a point-separating family $\mathcal{G}\sub\clop(L)$ with a family $\cF$ of pseudoclopens of $K$. Consider $$K_0 = \bigcap_{a\in \mathcal{G}}\left( \phi(a)^- \cup (K\setminus \phi(a)^+)\right),$$
and define $f:K_0\To L$ by declaring $f(x)$ to be the only point in
$$L(x)=\bigcap\{a\in \mathcal{G} : x\in \phi(a)^-\}\setminus \bigcup \{a\in \mathcal{G} : x\not\in \phi(a)^+\}.$$
To check that $f$ is well-defined note first that  we cannot have two different elements in $L(x)$;
indeed, take any $y_1,y_2\in L$, $y_1\neq y_2$. Because the family $\mathcal{G}$ separates the points of $L$
we can assume, by symmetry,  that $y_1\in a$ and $y_2\notin a$ for some $a\in\cG$. Since $x\in K_0$ we have
either $x\in \phi(a)^-$ giving $L(x)\sub a$ and $y_2\notin L(x)$, or $x\notin\phi(a)^+$ giving $L(x)\cap a=\emptyset$ and $y_1\notin L(x)$.

To see that  the  set $L(x)$  must be nonempty, note that the family
$$ \{a\in\cG: x\in \phi(a)^- \mbox{ or } x\notin\phi (a)^+\},$$
has the finite intersection property, as  $\phi$ is an isomorphism. Therefore, by compactness, such a family has a nonempty intersection.

In the same manner we check that for every $y\in L$
$$K(y)=\bigcap\{\phi(a)^- : a\in \mathcal{G}, y\in a\}\setminus \bigcup\{\phi(a)^+ : a\in \mathcal{G}, y\not\in a\}\neq\emptyset.$$
taking $x\in K(y)$ we have $f(x)=y$, so $f$ is a surjection.

Finally, note that if  $a\in \mathcal{G}$ then
 $$f^{-1}[a] = \phi(a)^-\cap K_0=\phi^+(a)\cap K_0$$
 is a clopen subset of $K_0$.
 Since $\cG$ separates the points of $L$, $\cG$ generates $\clop(L)$ and therefore $f^{-1}[b]$ is clopen for every $b\in \clop(L)$.
 Hence $f$ is continuous.
\end{proof}

The way   of reasoning presented above, combining compactness and the notion of isomorphism will be repeated several times along this paper.
It will become clear later that the implication in Proposition \ref{cios} cannot be reversed: every $L$ is a continuous image of a closed subspace
of $[0,1]^\Gamma$ but, as we shall see, some $L$ are not Boolean images of the Tikhonov cubes.

\begin{prop}
If $L_i$ is a Boolean image of $K_i$ for $i\in I$, then $L=\prod_{i\in I} L_i$ is a Boolean image of $K=\prod_{i\in I} K_i$.
\end{prop}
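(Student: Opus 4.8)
The plan is to transport each witness along coordinate projections. Write $\pi_i\colon L\to L_i$ and $\rho_i\colon K\to K_i$ for the projections. First I would dispose of the degenerate case in which some $K_i=\emptyset$: then by Proposition~\ref{cios} the space $L_i$ is a continuous image of a closed subspace of $\emptyset$, hence $L_i=\emptyset$, so $L=\emptyset$ and there is nothing to prove. Thus assume all the $K_i$, and hence all the $L_i$, are nonempty. For each $i$ fix a point-separating family $\mathcal{G}_i\sub\clop(L_i)$ and an isomorphism $\phi_i$ of $\mathcal{G}_i$ onto a family of pseudoclopens of $K_i$; deleting $\emptyset$ and $L_i$ from $\mathcal{G}_i$ affects neither point-separation (a set separating two points is nonempty and proper, hence is neither $\emptyset$ nor $L_i$) nor the fact that the restriction of $\phi_i$ is still an isomorphism, so I assume $\emptyset,L_i\notin\mathcal{G}_i$. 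Put
$$\mathcal{G}=\{\pi_i^{-1}(a):i\in I,\ a\in\mathcal{G}_i\}\sub\clop(L).$$
The map $(i,a)\mapsto\pi_i^{-1}(a)$ is injective on the index set: for a fixed $i$ because $\pi_i$ is onto, and for $i\neq j$ because, after the deletion, $\pi_i^{-1}(a)$ is a nonempty proper subset of $L$ depending only on coordinate $i$, and such a set cannot depend only on coordinate $j$ as well. Moreover $\mathcal{G}$ separates the points of $L$: if $x\neq y$ then $x_t\neq y_t$ for some $t$, some $a\in\mathcal{G}_t$ separates $x_t$ from $y_t$, and $\pi_t^{-1}(a)$ then separates $x$ from $y$.

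Next I would define $\phi$ on $\mathcal{G}$ by sending $\pi_i^{-1}(a)$ to the pseudoclopen $\bigl(\rho_i^{-1}(\phi_i(a)^-),\ \rho_i^{-1}(\phi_i(a)^+)\bigr)$ of $K$; this is well defined by the injectivity just noted and is a pseudoclopen because $\rho_i^{-1}$ preserves inclusion, closedness and openness. It remains to verify, for arbitrary distinct $a_1,\dots,a_n,b_1,\dots,b_m\in\mathcal{G}$, the two implications in the definition of an isomorphism, and I would argue coordinate by coordinate. Write $a_i=\pi_{p_i}^{-1}(\alpha_i)$ and $b_j=\pi_{q_j}^{-1}(\beta_j)$, and for each coordinate $t$ set $A_t=\{\alpha_i:p_i=t\}$ and $B_t=\{\beta_j:q_j=t\}$; since the $a_i,b_j$ are distinct and $\pi_t^{-1}$ is injective, $A_t$ and $B_t$ together form a set of distinct members of $\mathcal{G}_t$, and $A_t=B_t=\emptyset$ for all but finitely many $t$.

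The computation rests on the fact that the relevant sets are ``boxes''. Since $\pi_t^{-1}$ commutes with the Boolean operations,
$$\bigcap_{i\le n}a_i\sm\bigcup_{j\le m}b_j=\prod_t C_t,\quad C_t=\bigcap A_t\sm\bigcup B_t\sub L_t$$
(with the convention that an empty intersection is $L_t$), and $C_t=L_t$ for all but finitely many $t$; as each $L_t$ is nonempty, this box is empty iff $C_{t_0}=\emptyset$ for some $t_0$. Likewise, as $\rho_t^{-1}$ commutes with the Boolean operations,
$$\bigcap_{i\le n}\phi(a_i)^+\sm\bigcup_{j\le m}\phi(b_j)^-=\prod_t D_t,\quad D_t=\bigcap_{\alpha\in A_t}\phi_t(\alpha)^+\sm\bigcup_{\beta\in B_t}\phi_t(\beta)^-,$$
and symmetrically $\bigcap_i\phi(a_i)^-\sm\bigcup_j\phi(b_j)^+=\prod_t E_t$ with $E_t=\bigcap_{\alpha\in A_t}\phi_t(\alpha)^-\sm\bigcup_{\beta\in B_t}\phi_t(\beta)^+$; since each $K_t$ is nonempty, these boxes are empty iff $D_{t_0}=\emptyset$, respectively $E_{t_0}=\emptyset$, for some $t_0$. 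Now, if $\bigcap a_i\sm\bigcup b_j=\emptyset$ then $C_{t_0}=\emptyset$ for some $t_0$, so $D_{t_0}=\emptyset$ because $\phi_{t_0}$ is an isomorphism, so $\bigcap\phi(a_i)^+\sm\bigcup\phi(b_j)^-=\emptyset$; and if $\bigcap\phi(a_i)^-\sm\bigcup\phi(b_j)^+=\emptyset$ then $E_{t_0}=\emptyset$ for some $t_0$, so $C_{t_0}=\emptyset$ because $\phi_{t_0}$ is an isomorphism, so $\bigcap a_i\sm\bigcup b_j=\emptyset$. This yields both required implications; and $\phi$ is injective (hence a bijection onto $\{\phi(c):c\in\mathcal{G}\}$) because distinct clopen subsets of $L$ have nonempty symmetric difference, to which these implications apply. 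I do not expect a real obstacle; the points needing a little care are the well-definedness of the indexing of $\mathcal{G}$ — whence the preliminary deletion of $\emptyset$ and $L_i$ — and the bookkeeping that realizes the relevant sets as boxes over the coordinates so that emptiness can be checked factor by factor.
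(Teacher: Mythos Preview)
Your proof is correct and follows exactly the approach the paper takes: pull back the point-separating families and the pseudoclopens along the coordinate projections and check that the resulting map is an isomorphism. The paper's proof is two sentences long and dismisses the verification as ``routine''; you have simply written out that routine in full, and in doing so you have taken care of two small edge cases (empty factors and the possible collision $\pi_i^{-1}(\emptyset)=\pi_j^{-1}(\emptyset)$) that the paper silently ignores.
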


\begin{proof}
For every $i$, consider a point-separating family $\mathcal{G}_i\sub\clop(L_i)$ and a family $\mathcal{F}_i$ of pseudoclopens of $K_i$ that is  isomorphic to
$\cG_i$. Write, for every $i\in I$, $\pi_i:L\To L_i$ and $p_i:K\to K_i$ for the projections, and put
$$\cG=\bigcup_{i\in I} \left\{\pi_i^{-1}[a]: a\in \cG_i\right\},
\quad \cF=\bigcup_{i\in I} \left\{ (p_i^{-1}[b^-], p_i^{-1}[b^+]) : b\in \cF_i\right\}. $$
It is routine  to check that the families are isomorphic.
\end{proof}

\begin{cor}
For every infinite set $\Gamma$ the space $2^\Gamma$ is a Boolean image of $[0,1]^\Gamma$.
\end{cor}

Let us remark that $[0,1]^\Gamma$ is, in a sense, the smallest compact connected space having $2^\Gamma$ as a Boolean image:
If $2^\Gamma$ is a Boolean image of a compact space $K$ then $K$ can be continuously mapped onto
$[0,1]^\Gamma$. This follows from the fact that $2^\omega$ maps continuously onto $[0,1]^\omega$.

There are two natural questions concerning Boolean and continuous images we have not been able to settle.

\begin{prob}\label{bf:problems}
%$\mbox{ }$
\begin{enumerate}[(i)]
\item Suppose that $L$ is a continuous image of every zero-dimensional space that maps continuously onto $K$.  Does this imply that $L$ is a Boolean image of $K$?
\item Let $L$ be a Boolean image of $K$ and suppose that $L'$ is a continuous image of $L$. Is $L'$  a Boolean image of $K$?
\end{enumerate}
\end{prob}

\section{Boolean images of connected spaces}\label{secconnected}

Given a finite family $\mathcal{F}$ of sets, we consider the graph $\graph(\mathcal{F})$ which is defined as follows.
The set of its vertices consists  of  the subsets $\cY\subset \mathcal{F}$  such that
$$\bigcap  \cY \setminus \bigcup \big( \mathcal{F}\setminus \cY\big) \neq\emptyset.$$
We declare that vertices   $\cY,\cZ$ are joined by an edge if and only if their symmetric difference
$\cY \bigtriangleup \cZ = (\cY\setminus \cZ)\cup (\cZ\setminus \cY)$ is exactly one point.

\begin{lem}\label{connected:finite}
Let $I$ be a finite set and let $L\sub 2^I$; write $F_i=\{x\in L: x_i=1\}$ for $i\in I$ and  $\cF=\{F_i:i\in I\}$.
Let $K\sub [0,1]^I$ be set of all $y$ such that for some $k\in I$ there are $x,x'\in L$ such that
$y_j=x_j=x_j'$ for $j\in I\sm\{k\}$ and $x_k'\le y_k\le x_k$.

Then $K$ is a compact subspace of $[0,1]^I$ and $K$ is connected whenever $\graph(\cF)$ is a connected graph. Moreover,  the mapping
$$F_i \mapsto (F_i,V_i ), \mbox{ where } V_i=\{x\in K : x_i>0\},$$
establishes an isomorphism between the family $\cF$ and a family of pseudoclopens of $K$.
\end{lem}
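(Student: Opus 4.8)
The plan is to recognize $K$ as the geometric realization, inside $[0,1]^I$, of the graph $\graph(\cF)$. Assuming (as is tacit in the statement) that $i\mapsto F_i$ is injective, the vertices of $\graph(\cF)$ are in natural bijection with the points of $L$: a subset $\{F_i:i\in J\}$ of $\cF$ satisfies $\bigcap\{F_i:i\in J\}\sm\bigcup\{F_i:i\notin J\}=\{x\in L:x_i=1\mbox{ iff }i\in J\}$, so it is a vertex precisely when $L$ contains the characteristic vector $\chi_J$ of $J$, and two vertices are adjacent precisely when the corresponding points of $L$ differ in exactly one coordinate. Under this dictionary, $K$ consists of $L$ together with, for each edge of $\graph(\cF)$ --- say $x,x'\in L$ differing only in coordinate $k$ --- the closed unit segment $[x,x']=\{z\in[0,1]^I:z_j=x_j\mbox{ for }j\neq k,\ z_k\in[0,1]\}$. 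In particular $K$ is a finite union of singletons and segments, hence a compact subspace of $[0,1]^I$.

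Connectedness then follows routinely. Assume $\graph(\cF)$ is connected; we may take $L\neq\emptyset$ (otherwise $K=\emptyset$) and fix $x_0\in L$. For any $x\in L$, an edge-path from $x_0$ to $x$ in $\graph(\cF)$ gives, on concatenating the corresponding segments, a connected subset of $K$ containing $x_0$ and $x$. Since every point of $K$ lies on one of these segments or is itself a point of $L$, $K$ is a union of connected sets all containing $x_0$, hence connected.

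It remains to verify the isomorphism. To $F_i$ we assign the pseudoclopen $(F_i,V_i)$, where $V_i=\{x\in K:x_i>0\}$ is open and the closed component ``$F_i$'' is understood as its natural extension $\{x\in K:x_i=1\}=\pi_i^{-1}(1)\cap K$, so that $F_i\sub V_i$. To check that $\phi\colon F_i\mapsto(F_i,V_i)$ is an isomorphism we verify the two implications in the definition: fix disjoint finite $S,T\sub I$ indexing the distinct $a_i$'s and $b_j$'s, and put $E=\{x\in L:x_i=1\ (i\in S),\ x_i=0\ (i\in T)\}$, so that $\bigcap_{i\in S}a_i\sm\bigcup_{j\in T}b_j=E$ inside $L$. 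One implication is immediate: $\bigcap_{i\in S}\phi(a_i)^-\sm\bigcup_{j\in T}\phi(b_j)^+=\{y\in K:y_i=1\ (i\in S),\ y_j=0\ (j\in T)\}$ contains $E$, so its emptiness forces $E=\emptyset$. For the other implication, suppose $E=\emptyset$ and let $y$ lie in $\bigcap_{i\in S}\phi(a_i)^+\sm\bigcup_{j\in T}\phi(b_j)^-=\{y\in K:y_i>0\ (i\in S),\ y_j<1\ (j\in T)\}$. If $y\in L$ then $y\in E$, which is impossible. Otherwise $y$ lies in the interior of a segment $[x,x']$, with $x,x'\in L$ differing only in coordinate $k$, say $x_k=1$ and $x'_k=0$; comparing the coordinates $\neq k$, where $y$, $x$ and $x'$ agree, we get $x_i=1$ for all $i\in S\sm\{k\}$ and $x_j=0$ for all $j\in T\sm\{k\}$. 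If $k\notin T$ this gives $x\in E$; if $k\in T$ (hence $k\notin S$) it gives $x'\in E$. Either way $E\neq\emptyset$, a contradiction, so the set is empty and $\phi$ is an isomorphism.

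The one genuinely delicate point, I expect, is this final case distinction: one must notice that when the ``free'' coordinate $k$ of the segment carrying $y$ belongs to $T$, it is the opposite endpoint $x'$, not $x$, that lands in $E$. A related trap is that the closed half of the pseudoclopen attached to $F_i$ has to be the extension $\{x\in K:x_i=1\}$ rather than the finite set $F_i\sub L$ --- with the latter choice $\phi$ fails to be an isomorphism.
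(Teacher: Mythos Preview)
Your proof is correct and follows essentially the same line as the paper's: identify $K$ with $L$ together with the segments joining adjacent vertices of $\graph(\cF)$, deduce compactness and (path-)connectedness from this description, and verify the isomorphism by the case distinction on whether the ``free'' coordinate $k$ of the segment carrying $y$ lies in the subtracted index set. Your remark that the closed part of the pseudoclopen must be read as $\{y\in K:y_i=1\}$ rather than the finite set $F_i\subset L$ is a worthwhile clarification --- the paper uses the same symbol for both and its argument only goes through under that reading.
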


\begin{proof}
Clearly $K$ is compact as it  is a union of $L$ and a finite number of segments contained in $[0,1]^I$.
Note that every $x\in L$ corresponds to the vertex $\cY\sub\cF$, where $\cY=\{F_i\in\cF: x_i=1\}$ and $K$ contains
a segment joining two vertices that are joined by an edge in $\graph(\cF)$. Therefore, if $\graph(\cF)$ is a connected graph then
$K$ is path-connected.

Suppose that $y\in \bigcap_{i\in J}  V_i\sm \bigcup_{i\in I\sm J} F_i\neq\emptyset$ for some $J\sub I$ and consider
the set $$H=\bigcap_{j\in J}  F_i\sm \bigcup_{i\in I\sm J} F_i.$$
Then either $y\in L$ and $y\in H$ or there is $k\in I$ and $x,x'\in L$ with $x_k'=0$, $x_k=1$ and $y_j=x_j=x_j'$ for $i\neq k$.
Then $x\in  H$ if $k\in J$ and $x'\in  H$
otherwise. Hence, in each case, $H\neq\emptyset$.
Since $L\sub K$, this shows that the mapping $F_i\mapsto (F_i,V_i )$ is indeed an isomorphism.
\end{proof}

\begin{thm}\label{connected}
For a family $\mathcal{G}$ of sets  the following are equivalent
\begin{enumerate}[(i)]
\item the family $\mathcal{G}$ is isomorphic to a family of pseudoclopens in a compact connected space;
\item for every nonempty finite $\mathcal{F}\subset\mathcal{G}$, $\graph(\mathcal{F})$ is a connected graph.
\end{enumerate}
\end{thm}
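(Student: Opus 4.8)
The plan is to prove the two implications separately, using Lemma~\ref{connected:finite} as the engine for the hard direction $(ii)\Rightarrow(i)$.

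For $(i)\Rightarrow(ii)$, suppose $\phi:\cG\To\cF$ is an isomorphism onto a family of pseudoclopens of a compact \emph{connected} space $K$, and fix a finite $\mathcal F_0\sub\cG$, say enumerated as $\{a_i:i\in I\}$ with corresponding pseudoclopens $\phi(a_i)=(b_i^-,b_i^+)$. I would argue that if $\graph(\mathcal F_0)$ were disconnected, then $K$ would be disconnected. The idea: a partition of the vertex set of $\graph(\mathcal F_0)$ into two clopen-in-the-graph pieces $\mathcal A\sqcup \mathcal B$ that are not joined by any edge should lift, via the atomic sets $b^-$'s and $b^+$'s, to a clopen partition of $K$. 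Concretely, for a vertex $\cY\sub\mathcal F_0$ the corresponding ``atom in $K$'' is $\bigcap_{i\in\cY}b_i^{\pm}\sm\bigcup_{i\notin\cY}b_i^{\pm}$; the isomorphism (used in the form with the $P(\phi(\bar a))^\pm=\emptyset$ equivalences stated in the excerpt) guarantees that the atoms corresponding to non-vertices are empty on \emph{both} the ``$-$'' and ``$+$'' sides, while those corresponding to vertices are nonempty. The only subtlety is that distinct pseudoclopens need not have disjoint closures, so the ``atoms'' overlap on their boundaries; the point of the edge relation in $\graph(\mathcal F_0)$ is precisely that two vertex-atoms can have intersecting closures only when their symmetric difference is a singleton, i.e.\ only when they are joined by an edge. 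Hence if $\mathcal A,\mathcal B$ are not edge-connected, the union of closed atoms over $\mathcal A$ and the union over $\mathcal B$ are disjoint closed sets covering $K$, contradicting connectedness. I expect this is the step requiring the most care, because it forces one to track exactly how much the closures of the atoms can overlap; this is where the combinatorial definition of $\graph$ is reverse-engineered.

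For $(ii)\Rightarrow(i)$, the strategy is an inverse-limit (or ``amalgamation'') argument over finite subfamilies, feeding each finite piece through Lemma~\ref{connected:finite}. Enumerate the finite nonempty subfamilies of $\cG$; for each such $\mathcal F$, regarded as living on a finite power $2^{\mathcal F}$ via the natural point-separated quotient $L_{\mathcal F}$ of the Stone space, Lemma~\ref{connected:finite} produces a compact connected $K_{\mathcal F}\sub[0,1]^{\mathcal F}$ together with pseudoclopens $(F_i,V_i)$ realizing $\mathcal F$. When $\mathcal F\sub\mathcal F'$ there is a natural coordinate projection $[0,1]^{\mathcal F'}\to[0,1]^{\mathcal F}$, and I would check that it carries $K_{\mathcal F'}$ onto $K_{\mathcal F}$ and respects the assigned pseudoclopens (the segments added in the construction for $\mathcal F'$ project to segments or points compatible with those for $\mathcal F$). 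Taking the inverse limit $K=\varprojlim K_{\mathcal F}$ over this directed system yields a compact space which is connected, being an inverse limit of compact connected spaces; and the pseudoclopens $(\pi_{\mathcal F}^{-1}[F_i],\pi_{\mathcal F}^{-1}[V_i])$ pulled back from the finite stages assemble into a family of pseudoclopens of $K$ isomorphic to all of $\cG$, since isomorphism of families of pseudoclopens is a property witnessed by finite subfamilies (the displayed equivalences in the definition quantify over finitely many members at a time). One must be slightly careful that the pulled-back pairs are genuine pseudoclopens of the limit, i.e.\ that $\pi_{\mathcal F}^{-1}[F_i]$ is closed and $\pi_{\mathcal F}^{-1}[V_i]$ is open with the former inside the latter — all immediate from continuity of $\pi_{\mathcal F}$ and the corresponding facts at the finite stage.

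The main obstacle, as indicated, is the forward direction: translating ``$\graph(\mathcal F)$ disconnected'' into ``$K$ disconnected'' requires showing that the closed atoms $\overline{\text{atom}(\cY)}$ over two graph-components are disjoint, which is not a formal consequence of the isomorphism alone but needs the observation that $\overline{\text{atom}(\cY)}\cap\overline{\text{atom}(\cZ)}\ne\emptyset$ already forces, via the isomorphism applied to the Boolean polynomial separating $\cY$ from $\cZ$, that $\cY$ and $\cZ$ differ in a controlled way — ultimately in exactly one coordinate, i.e.\ that they are $\graph$-adjacent. Once that lemma-within-the-proof is isolated and proved, both implications close routinely.
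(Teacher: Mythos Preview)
Your $(ii)\Rightarrow(i)$ direction is correct and is exactly the paper's argument: pass to the Stone picture $L\sub 2^\Gamma$, build $K_I\sub[0,1]^I$ for each finite $I$ via Lemma~\ref{connected:finite}, verify that the coordinate projections carry $K_{I'}$ onto $K_I$ for $I\sub I'$, and take the inverse limit of compact connected spaces.

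For $(i)\Rightarrow(ii)$ your overall strategy matches the paper's, but the key lemma you isolate is \emph{false}: it is not true that two atoms whose closures meet must come from $\graph$-adjacent vertices. Take $K=[0,1]^2$ with pseudoclopens $\phi(a_i)=(\{x:x_i\le 1/3\},\{x:x_i<2/3\})$ for $i=1,2$ (isomorphic to a pair of independent sets); the open atoms for $\cY=\{a_1,a_2\}$ and $\cZ=\emptyset$ overlap on $(1/3,2/3)^2$, yet $|\cY\triangle\cZ|=2$. The paper works with the \emph{open} atoms
\[
V(\cY)=\bigcap_{a\in\cY}\phi(a)^+\sm\bigcup_{a\in\cF\sm\cY}\phi(a)^-,
\]
which cover $K$ (each $y$ lies in $V(\cY)$ for $\cY=\{a:y\in\phi(a)^+\}$, and the isomorphism forces this $\cY$ to be a vertex). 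The correct claim is weaker than yours: if $V(\cY)\cap V(\cZ)\ne\emptyset$ then $\cY,\cZ$ lie in the same graph \emph{component}. This is proved by induction on $|\cY\triangle\cZ|$: for $y\in V(\cY)\cap V(\cZ)$ and $a\in\cY\sm\cZ$, one checks $y\in V(\cY\sm\{a\})\cap V(\cZ)$ and that $\cY'=\cY\sm\{a\}$ is again a vertex by the isomorphism; hence $\cY$ is edge-joined to $\cY'$, which by induction is path-connected to $\cZ$. With this in hand, a disconnection of $\graph(\cF)$ into $\cA\sqcup\cB$ gives the nontrivial open partition $K=\bigcup_{\cY\in\cA}V(\cY)\cup\bigcup_{\cZ\in\cB}V(\cZ)$, contradicting connectedness. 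Your ``closed atoms'' route runs into the further difficulty that the small closed versions $\bigcap_{a\in\cY}\phi(a)^-\sm\bigcup_{a\notin\cY}\phi(a)^+$ need not cover $K$, while the closures $\overline{V(\cY)}$ do cover but fail the adjacency claim as above.
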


\begin{proof}
We first prove the implication $(ii)\to (i)$. Without loss of generality we can assume that $\cG\sub \clop(L)$, where $L$ is compact and zerodimensional and $\cG$ separates the points of $L$.
In turn,  an application of the embedding $L\ni x\mapsto (1_G(x))_{G\in \cG}\in 2^\cG$ reduces the situation to the case
when $L\sub 2^\Gamma$ and $\cG=\{F_\gamma: \gamma\in\Gamma\}$, where $F_\gamma=\{x\in L: x_\gamma=1\}$.

 For every finite $I\sub \Gamma$, we consider the projection $L_I=\pi_I[L]$
and the family $\cF_I=\{\pi_I[F_\gamma]:\gamma\in I\}$. By Lemma \ref{connected:finite}, $\cF_I$ is isomorphic to a family of pseudoclopens
of a compact connected space $K_I\sub [0,1]^I$. It follows from the construction of $K_I$ that for $I_1\sub I_2\sub \Gamma$,
$K_{I_1}$ is a projection of $K_{I_2}$.

We have defined in this way an inverse system $(K_I)_{I\in [\Gamma]^{<\omega}}$ of compact connected spaces (with projections acting as bonding maps).
It follows that the limit $K$ of that system is compact and connected, see eg,\ \cite{En}, 6.1.18.
It follows easily from the basic properties of inverse systems that  the mapping $F_\gamma\mapsto (F_\gamma, V_\gamma)$, where
$V_\gamma=\{y\in K: y_\gamma>0\}$, gives the required  isomorphism
between $\cF$ and a family of pseudoclopens of $K$.

To verify $(i)\to (ii)$ consider a finite family $\cF$ of sets and an isomorphism $\phi$ from $\cF$  onto a family of pseudoclopens in a compact connected
space $K$.  For any vertex $\cY$ in $\graph(\cF)$ consider the open set $V(\cY)\sub K$, where
$$ V(\cY)=\bigcap_{a\in \cY} \phi(a)^+\sm\bigcup_{a\in\cF\sm\cY} \phi(a)^-.$$

\noindent {\sc Claim.} If $V(\cY)\cap V(\cZ)\neq\emptyset$ then the vertices $\cY,\cZ$ are connected by a path in $\graph(\cF)$.
\medskip

Consider $a\in \cY\sm\cZ$ and set $\cY'=\cY\sm\{a\}$. Take $y\in V(\cY)\cap V(\cZ)$. Then $y\in \phi(b)^+$ for $b\in \cY'$ and
$y\notin \phi(b)^-$ for $b\notin\cY'$, because $a\notin \cZ$.
Therefore $y\in V(\cY')\cap V(\cZ)$.

Since $\phi$ is an isomorphism, $\cY'$ is a vertex in $\graph(\cF)$,  and
$\left| \cY'\btu \cZ\right| =\left| \cY\btu \cZ\right|-1$. Hence, by induction, there is a path in $\graph(\cF)$ leading from $\cY$ to $\cZ$.
\medskip

The space $K$  is a union of $V(\cY)$ over all the vertices $\cY$; indeed, for a given $y\in K$ consider $\cY=\{s\in\cF: y\in\phi(a)^+\}$ which is a vertex
since $\phi$ is an isomorphism.
Therefore, the claim shows that  $\graph(\cF)$ cannot be split into two nonempty sets of vertices that are not connected; otherwise  $K$ would be a disjoint union
of nonempty open sets, which is impossible as $K$ is a connected topological space.
 \end{proof}

The referee has remarked that the connected space $K$ constructed in the first part of Theorem \ref{connected} is of covering dimension one.

\begin{cor}\label{wn:1}
 A family of sets $\mathcal{G}$ is isomorphic to a family of pseudoclopens in a compact connected space if and only if
 every finite $\cF\sub\cG$ has such a property.
\end{cor}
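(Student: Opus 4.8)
The plan is to deduce Corollary~\ref{wn:1} directly from Theorem~\ref{connected}. Recall that Theorem~\ref{connected} asserts the equivalence, for a family $\cG$ of sets, between (i) $\cG$ being isomorphic to a family of pseudoclopens in a compact connected space, and (ii) the condition that $\graph(\cF)$ be connected for every nonempty finite $\cF\sub\cG$. The point of the corollary is that condition (ii) is, by its very form, a condition on finite subfamilies, so it trivially ``localises''.

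First I would spell out what needs to be shown. Write $(\ast)$ for the property ``$\cG$ is isomorphic to a family of pseudoclopens in a compact connected space''. We must prove $\cG$ has $(\ast)$ if and only if every finite $\cF\sub\cG$ has $(\ast)$. For the forward direction, suppose $\cG$ has $(\ast)$; by Theorem~\ref{connected}, condition (ii) holds for $\cG$, i.e.\ $\graph(\cF)$ is connected for every finite $\cF\sub\cG$. Now fix a finite $\cF\sub\cG$; its finite subfamilies are exactly the finite subfamilies of $\cG$ contained in $\cF$, so condition (ii) holds for $\cF$ as well, and applying Theorem~\ref{connected} again (to $\cF$ in place of $\cG$) gives that $\cF$ has $(\ast)$. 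For the converse, suppose every finite $\cF\sub\cG$ has $(\ast)$. Fix a nonempty finite $\cF\sub\cG$; by hypothesis $\cF$ has $(\ast)$, so by Theorem~\ref{connected} applied to $\cF$, condition (ii) holds for $\cF$ — in particular $\graph(\cF)$ itself is connected. Since $\cF$ was an arbitrary nonempty finite subfamily of $\cG$, condition (ii) holds for $\cG$, and Theorem~\ref{connected} applied to $\cG$ yields that $\cG$ has $(\ast)$.

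There is essentially no obstacle here: the content is entirely in Theorem~\ref{connected}, and the corollary is the observation that the characterising condition (ii) is ``finitary'' — it only ever quantifies over nonempty finite subfamilies, and the collection of nonempty finite subfamilies of a finite set $\cF$ coincides with the collection of those nonempty finite subfamilies of $\cG$ that happen to lie inside $\cF$. The only thing one should be mildly careful about is the empty/nonempty bookkeeping: condition (ii) quantifies over \emph{nonempty} finite subfamilies, and when we pass to a finite $\cF\sub\cG$ we are quantifying over its nonempty finite subfamilies, so the two ranges match up exactly; this causes no difficulty. Thus the proof is a two-line invocation of Theorem~\ref{connected} in each direction.

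\begin{proof}
Both implications follow immediately from Theorem~\ref{connected}. Suppose $\cG$ is isomorphic to a family of pseudoclopens in a compact connected space; then, by Theorem~\ref{connected}, $\graph(\cF)$ is connected for every nonempty finite $\cF\sub\cG$. If $\cF\sub\cG$ is finite, its nonempty finite subfamilies are among the nonempty finite subfamilies of $\cG$, so $\graph(\cF')$ is connected for every nonempty finite $\cF'\sub\cF$, and Theorem~\ref{connected}, now applied to $\cF$, shows that $\cF$ is isomorphic to a family of pseudoclopens in a compact connected space. Conversely, assume every finite $\cF\sub\cG$ has this property. For a nonempty finite $\cF\sub\cG$, Theorem~\ref{connected} applied to $\cF$ gives in particular that $\graph(\cF)$ is connected. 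Hence $\graph(\cF)$ is connected for every nonempty finite $\cF\sub\cG$, and a final application of Theorem~\ref{connected} shows that $\cG$ is isomorphic to a family of pseudoclopens in a compact connected space.
\end{proof}
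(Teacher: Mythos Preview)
Your proof is correct and matches the paper's intent: the corollary is stated without proof there, being an immediate consequence of Theorem~\ref{connected}, and your argument spells out exactly the obvious deduction---condition~(ii) is finitary, so it holds for $\cG$ iff it holds for every finite $\cF\sub\cG$.
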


\begin{remark}
Let $A,B,C$ be subsets of a space $X$ such that
$$D=A\cap B=B\cap C=A\cap C.$$
If $D\notin\{\emptyset, X\}$ then the family $\cF=\{A,B,C\}$ is not isomorphic to pseudoclopens in a connected space; indeed,
in $\graph(\cF)$ the vertex $\cF$ is isolated.
\end{remark}

For a family $\cG$ contained in some Boolean algebra we write $\la\cG\ra$ for the subalgebra generated by $\cG$.

\begin{cor}\label{wn:2}
Let $\cG$ be a family of sets such that for every nonempty finite $\cF\sub\cG$ there is $a\in\cF$ containing no nonempty set
from $\la\cF\sm\{a\}\ra$. Then $\cG$ is isomorphic to a family of pseudoclopens in a compact connected space.
\end{cor}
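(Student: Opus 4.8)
The plan is to derive the corollary from Theorem~\ref{connected}: it suffices to check that $\graph(\cF)$ is a connected graph for every nonempty finite $\cF\sub\cG$, and I will do this by induction on $|\cF|$, treating the hypothesis of the corollary as a standing assumption so that it is available for every subfamily that comes up. The base case $\cF=\{a\}$ is immediate: applied to $\{a\}$, the hypothesis says that $a$ contains no nonempty member of $\la\emptyset\ra$, that is, $a$ is not the whole underlying set, so $\emptyset$ is a vertex of $\graph(\{a\})$; and $\{a\}$ is a vertex exactly when $a\neq\emptyset$, in which case it is joined to $\emptyset$ by an edge. Hence $\graph(\{a\})$ is connected.

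For the inductive step take $|\cF|\ge 2$ and let $a\in\cF$ be as supplied by the hypothesis for $\cF$, so that no nonempty set from $\la\cF'\ra$ is contained in $a$, where $\cF':=\cF\sm\{a\}$. For $\cY\sub\cF'$ put $A(\cY)=\bigcap\cY\sm\bigcup(\cF'\sm\cY)$; the property of $a$ says precisely that $A(\cY)\sm a\neq\emptyset$ whenever $A(\cY)\neq\emptyset$. The point is that this single fact does two jobs. First, the vertices of $\graph(\cF)$ that do not contain $a$ are exactly the vertices of $\graph(\cF')$, and $\graph(\cF)$ induces $\graph(\cF')$ on them: if $\cY$ is a vertex of $\graph(\cF')$, then, since $\cF\sm\cY=(\cF'\sm\cY)\cup\{a\}$, we get $\bigcap\cY\sm\bigcup(\cF\sm\cY)=A(\cY)\sm a\neq\emptyset$, so $\cY$ is a vertex of $\graph(\cF)$; conversely a vertex $\cY$ of $\graph(\cF)$ with $a\notin\cY$ has $\bigcap\cY\sm\bigcup(\cF'\sm\cY)\supseteq\bigcap\cY\sm\bigcup(\cF\sm\cY)\neq\emptyset$, so it is a vertex of $\graph(\cF')$; and the adjacency relation agrees because all symmetric differences involved are subsets of $\cF'$. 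Second, every vertex $\cZ$ of $\graph(\cF)$ with $a\in\cZ$ is joined by an edge to $\cZ\sm\{a\}$: from $\bigcap\cZ\sm\bigcup(\cF\sm\cZ)\neq\emptyset$ one reads off $A(\cZ\sm\{a\})\neq\emptyset$, so $\cZ\sm\{a\}$ is a vertex of $\graph(\cF')$, hence of $\graph(\cF)$ by the first point, and $\cZ\btu(\cZ\sm\{a\})=\{a\}$. Since $\cF'$ is a nonempty finite subfamily of $\cG$, the induction hypothesis gives that $\graph(\cF')$ is connected; by the first point the $a$-free vertices of $\graph(\cF)$ then span a connected subgraph, and by the second point every remaining vertex is adjacent to it. Therefore $\graph(\cF)$ is connected, completing the induction, and the corollary follows from Theorem~\ref{connected}.

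I do not anticipate a real obstacle. The only substantive step is the translation of the algebraic hypothesis on $a$ into the combinatorial statement ``every atom of $\la\cF\sm\{a\}\ra$ meets the complement of $a$'', together with the observation that this one statement simultaneously yields that $\graph(\cF\sm\{a\})$ sits inside $\graph(\cF)$ as an induced subgraph on the $a$-free vertices and that each $a$-vertex has a neighbour among them; some mild care with the degenerate vertex $\emptyset$ and with the top element is the reason the case $|\cF|=1$ is dispatched on its own.
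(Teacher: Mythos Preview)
Your argument is correct and follows essentially the same route as the paper: both proofs verify condition~(ii) of Theorem~\ref{connected} by induction on $|\cF|$, using the distinguished element $a$ to reduce to $\cF'=\cF\sm\{a\}$. The only cosmetic difference is presentation---the paper argues connectedness by a three-case analysis on a pair of vertices $\cY,\cZ$ according to whether $a$ lies in each, whereas you first establish the two structural facts (the $a$-free vertices carry an induced copy of $\graph(\cF')$, and each $a$-vertex is adjacent to its $a$-free partner) and then conclude; the underlying ideas are identical.
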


\begin{proof}
We shall check that the graph $\graph(\cF)$ is connected for every nonempty finite $\cF\sub\cG$. This can be done by induction on
$|\cF|$.  Take $a\in\cF$ that contains no nonempty element from $\la\cF\sm\{a\}\ra$; let $\cF'=\cF\sm\{a\}$.

Consider any vertices $\cY,\cZ$ in $\graph(\cF)$ and the following three cases.

\begin{enumerate}[(i)]
\item $a\notin\cY$, $a\notin\cZ$. Then $\cY,\cZ$ may be seen as verices in $\graph(\cF')$; moreover every vertex in $\graph(\cF')$ is also a vertex in
$\graph(\cF)$ (by the property of $a$) so $\cY,\cZ$ are connected by a path by induction.
\item $a\in\cY$, $a\in\cZ$. Put $\cY'=\cY\sm\{a\}$ and  $\cZ'=\cZ\sm\{a\}$. Then $\cY'$ is a vertex in $\graph(\cF)$ joined with $\cY$ by an edge. Moreover,
there is a path form $\cY'$ to $\cZ'$ in $\graph(\cF')$ so there is a path from $\cY$ to $\cZ$ in $\graph(\cF)$.
\item $a\in\cY$, $a\notin\cZ$. Consider $\cY'$ and $\cZ$ and argue as above.
\end{enumerate}

Now we finish the proof applying  Theorem \ref{connected}.
\end{proof}

\begin{cor}\label{wn:3}
Let $\cG$ be a family of sets such that every $a,b\in\cG$ are either comparable by inclusion or disjoint.
Then there  is $\cG'\sub\cG$ such that $\la\cG'\ra=\la\cG\ra$ and
$\cG'$ is isomorphic to a family of pseudoclopens in a compact connected space.
\end{cor}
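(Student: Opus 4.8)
\section*{Proof proposal for Corollary~\ref{wn:3}}

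The plan is to reduce everything to Theorem~\ref{connected}: it suffices to produce $\cG'\sub\cG$ with $\la\cG'\ra=\la\cG\ra$ such that $\graph(\cF)$ is connected for every finite nonempty $\cF\sub\cG'$. Since this requirement on $\cG'$ depends only on its finite subfamilies, it is of finite character, so Zorn's Lemma (Teichm\"uller--Tukey) gives a $\cG'\sub\cG$ that is maximal with respect to it. It then remains to check that $\la\cG'\ra=\la\cG\ra$, i.e.\ that $a\in\la\cG'\ra$ for every $a\in\cG\sm\cG'$. Fix such an $a$; by maximality there is a finite $\cF\sub\cG'\cup\{a\}$ with $a\in\cF$ for which $\graph(\cF)$ is disconnected, while $\graph(\cH)$ is connected for $\cH:=\cF\sm\{a\}\sub\cG'$. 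So the whole statement follows once we prove the following claim.

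\medskip
\noindent\emph{Claim.} Let $\cH$ be a finite family of sets, any two of which are comparable by inclusion or disjoint, with $\graph(\cH)$ connected. If $a$ is a set such that $\cH\cup\{a\}$ is again laminar and $\graph(\cH\cup\{a\})$ is disconnected, then $a\in\la\cH\ra$.
\medskip

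I would prove the Claim by reading $\graph$ off the tree structure. Adjoin a formal top $X$ (the ambient set) and order $T:=\cH\cup\{X\}$ by $\supseteq$, obtaining a finite tree rooted at $X$; for $h\in T$ put $\hat h=h\sm\bigcup\{h'\in\cH:h'\subsetneq h\}$. One checks that $\graph(\cH)$ is isomorphic to the subgraph of $T$, with its parent--child edges, spanned by the nodes $h$ with $\hat h\neq\emptyset$ (the vertex $\emptyset$ corresponding to $X$). Now let $q$ be the $\subseteq$-smallest member of $\cH\cup\{X\}$ strictly containing $a$; by laminarity the children of $a$ in $\cH\cup\{a\}$ are exactly the children of $q$ in $\cH$ that are contained in $a$, and passing from $\cH$ to $\cH\cup\{a\}$ changes nothing except that the atom of $q$ becomes $\hat q\sm a$ and a new atom $\hat a$ appears. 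Hence, if both $\hat q\sm a$ and $\hat a$ are nonempty, then $q$ and $a$ are vertices of $\graph(\cH\cup\{a\})$ and $q$ is a vertex of $\graph(\cH)$, and the new graph is obtained from the old one by rerouting the edges from $q$ to those of its children that are contained in $a$ so that they pass through the new vertex $a$ (together with the edge $a$--$q$); this operation preserves connectedness, contradicting our hypothesis. Therefore $\hat a=\emptyset$ or $\hat q\sm a=\emptyset$. In the first case $a$ equals the union of its children, a finite union of members of $\cH$. In the second case $\hat q\subseteq a$, which forces $q=a\cup\bigcup\{c:c\text{ a child of }q\text{ in }\cH,\ c\not\sub a\}$ with the two pieces disjoint, hence $a=q\sm\bigcup\{c:c\text{ a child of }q,\ c\not\sub a\}$; and when $\hat q=\emptyset$ this degenerates to $a=\bigcup\{c:c\text{ a child of }q,\ c\sub a\}$. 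In every case $a$ is a finite Boolean combination of members of $\cH$ --- using $X\in\la\cH\ra$ when $q=X$ --- so $a\in\la\cH\ra$, proving the Claim.

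Granting the Claim, the reduction above gives $\la\cG'\ra=\la\cG\ra$; since $\graph(\cF)$ is connected for every finite $\cF\sub\cG'$, Theorem~\ref{connected} shows that $\cG'$ is isomorphic to a family of pseudoclopens of a compact connected space, which is exactly the assertion of the corollary. The part I expect to be most delicate is the case analysis inside the Claim: one must keep track of whether the atom of $q$ or of $a$ becomes empty (so that a vertex of $\graph$ disappears) and verify that these are the only mechanisms that can destroy connectedness, each of them exhibiting $a$ as a finite combination of the members of $\cH$ lying immediately around it. One could instead run the same Zorn argument against the finitary condition of Corollary~\ref{wn:2}.
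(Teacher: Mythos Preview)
Your argument is correct. The Tukey/Zorn step and the tree reading of $\graph(\cH)$ are sound: for a laminar $\cH$ the vertices of $\graph(\cH)$ are exactly the nodes $h\in\cH\cup\{X\}$ with $\hat h\neq\emptyset$, the edges are parent--child, and inserting $a$ below $q$ only splits $\hat q$ into $\hat q\sm a$ and $\hat a$; when both pieces survive, you have merely subdivided some edges through the new vertex $a$ and added the edge $q$--$a$, which preserves connectedness. The remaining two cases immediately express $a$ in $\la\cH\ra$.

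The paper proceeds differently. Instead of maximizing the condition ``$\graph(\cF)$ is connected for every finite $\cF$'' and then proving your Claim, it maximizes the much more concrete laminar condition
\[
\text{for all }a,b_1,\ldots,b_n\in\cG'\text{ with }\bigcup_i b_i\subseteq a,\quad a\sm\bigcup_i b_i\notin\cG',
\]
checks directly that any $x\in\cG\sm\cG'$ falls into $\la\cG'\ra$, and then observes that in such a $\cG'$ every finite $\cF$ has a \emph{minimal} member containing no nonempty element of $\la\cF\sm\{a\}\ra$; Corollary~\ref{wn:2} then finishes. So the paper routes the graph-connectedness verification through Corollary~\ref{wn:2} (where the inductive ``peel off a minimal element'' argument lives), while you do that verification by hand via the tree picture. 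Your route is more self-contained---it bypasses Corollary~\ref{wn:2} entirely---at the cost of the explicit vertex/edge bookkeeping in the Claim; the paper's route is shorter here because the work has already been packaged into \ref{wn:2}. Your closing remark (``run the same Zorn argument against the condition of Corollary~\ref{wn:2}'') is close in spirit to what the paper actually does, though the paper's maximality condition is a still simpler, purely laminar one tailored so that the minimal element of each finite $\cF$ witnesses the hypothesis of \ref{wn:2}.
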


\begin{proof}
We may suppose that every $a\in\cG$ is nonempty.
Let $\cG'$ be a subfamily of $\cG$ that is maximal with respect to the property
$$\mbox{ for every } a,b_1,\ldots, b_n\in  \cG', \mbox{ if } b_1\cup\ldots b_n\sub a \mbox{ then } a\sm\bigcup_{i\le n} b_i\notin\cG'.$$

Note first that $\la\cG'\ra=\la \cG\ra$. Indeed if $x\in\cG\sm\cG'$ then $\cG'\cup\{x\}$ does not have the above property so
there are $a,b_1,\ldots, b_n\in\cG'\cup \{x\}$, $a$ containing all $b_i$, such that
$$a\sm\bigcup_{i\le n} b_i=g\in\cG'\cup\{x\}.$$
If $a=x$ then necessarily $g\neq x$ and $b_i\neq x$ (because $\emptyset\notin\cG$) so $b_i\in\cG'$ for every $i$;  this gives $x=\bigcup_i b_i \cup g\in \la\cG'\ra$.
The remaining case, when $x=b_i$ for some $i$, is similar.

If $\cF\sub\cG'$ is finite and nonempty then take $a\in\cF$ which is minimal in $\cF$ with respect to inclusion. Then from the property of $\cG'$ it follows that $a$ contains no nonempty element from
$\la\cF\sm\{a\}\ra$ and we may apply Corollary \ref{wn:2}
\end{proof}

\begin{cor}\label{wn:4}
Let $L$ be a zero-dimensional compact space which is a continuous image of a zero-dimensional compact line $L^*$. Then
$L$ is a Boolean image of a compact connected space.
\end{cor}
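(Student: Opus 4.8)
The plan is to use Corollary~\ref{wn:3} together with Proposition~\ref{preimage}. First I would exploit the structure of a compact line: if $L^*$ is a zero-dimensional compact line (a compact linearly ordered topological space that is totally disconnected), then the clopen subsets of $L^*$ form a nice family. More precisely, recall that a base for the topology of $L^*$ is given by the (closed-open) order intervals, and in the zero-dimensional case the clopen sets are exactly finite unions of clopen intervals. I would take $\cG$ to be the family of all clopen initial segments of $L^*$ (sets of the form $\{x\in L^*: x\le c\}$ that happen to be clopen, equivalently $\{x : x<c\}$ when $c$ is not a right limit, etc.); any two such initial segments are comparable by inclusion, so in particular $\cG$ is a family in which any two members are comparable or disjoint. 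Moreover $\cG$ separates the points of $L^*$ and generates $\clop(L^*)$, since every clopen interval is a difference of two clopen initial segments. Hence by Corollary~\ref{wn:3} there is $\cG'\sub\cG$ with $\la\cG'\ra=\la\cG\ra=\clop(L^*)$ which is isomorphic to a family of pseudoclopens in a compact connected space $K$.

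Next I would observe that this says precisely that $L^*$ is a Boolean image of $K$: $\cG'$ is a point-separating subfamily of $\clop(L^*)$ (it generates $\clop(L^*)$ and separates points, which holds because it generates the point-separating family $\cG$) and it is isomorphic to a family of pseudoclopens of the connected compactum $K$. So $L^*$ is a Boolean image of $K$. Finally, since $L$ is a continuous image of $L^*$, Problem~\ref{bf:problems}(ii) is exactly what we would want but is open; instead I would argue directly. Given a continuous surjection $h:L^*\to L$, the map $c\mapsto h^{-1}(c)$ embeds $\clop(L)$ isomorphically into $\clop(L^*)$; let $\cH=\{h^{-1}(c): c\in\clop(L)\}$, a point-separating subfamily of $\clop(L^*)$. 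Composing the isomorphism $\varphi:\cG'\to$ (pseudoclopens of $K$) requires that $\cH\sub\la\cG'\ra=\clop(L^*)$, which holds trivially; but $\varphi$ is only defined on $\cG'$, so I need to extend it to all of $\la\cG'\ra$, or rather transfer the isomorphism. The clean way: since $\la\cG'\ra=\clop(L^*)\supset\cH$, and the notion of Boolean image only asks for \emph{some} point-separating family isomorphic to pseudoclopens, I would instead go back to Lemma~\ref{connected:finite} / Theorem~\ref{connected} applied not to $\cG'$ but directly to the family $\cH$ — checking that $\graph(\cF)$ is connected for every finite $\cF\sub\cH$. Since $\cH\sub\clop(L^*)$ and every pair of members of $\clop(L^*)$ is a finite Boolean combination of comparable-or-disjoint pieces, the same inductive argument as in Corollary~\ref{wn:3} (peeling off a minimal clopen interval) shows each $\graph(\cF)$ is connected. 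Then Theorem~\ref{connected} gives a compact connected $K'$ with $\cH$ isomorphic to pseudoclopens of $K'$, and since $\cH$ separates points of $L$ (via $h$), $L$ is a Boolean image of $K'$.

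The main obstacle I anticipate is the combinatorial verification that the relevant family on $L$ (or on $L^*$) really does have the ``locally tree-like'' property needed for Corollary~\ref{wn:2}/\ref{wn:3}: continuous images of compact lines are themselves generalized ordered spaces, but their clopen algebras are a bit more subtle, so I would want to work on $L^*$ where the linear order is genuine and the comparable-or-disjoint structure of clopen initial segments is transparent, and then push forward along $h$. Concretely, the delicate point is to confirm that for a finite family $\cF$ of clopen subsets of a zero-dimensional compact line, one can always find $a\in\cF$ containing no nonempty member of $\la\cF\sm\{a\}\ra$ — this follows by writing each member of $\cF$ as a finite union of clopen intervals, taking a common clopen-interval refinement, and choosing $a$ to be a union of intervals that includes an ``endmost'' atom of the refinement; I would carry this out carefully, as it is the one step where the order structure is essential.
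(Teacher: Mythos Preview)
Your first reduction (showing that $L^*$ itself is a Boolean image of a connected compactum via the chain of clopen initial segments) is fine, and you correctly identify that pushing this forward to $L$ runs into the open Problem~\ref{bf:problems}(ii). The difficulty is in your workaround.

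The fallback plan---taking $\cH=\{h^{-1}(c):c\in\clop(L)\}$ and verifying either the hypothesis of Corollary~\ref{wn:2} or directly that $\graph(\cF)$ is connected for every finite $\cF\sub\cH$---does not work, because those conditions simply fail for arbitrary finite families of clopens in a compact line. Concretely, take $L^*=\{0,1,2,3,4\}$ with the discrete order topology and set $a=\{1,2\}$, $b=\{1,3\}$, $c=\{1,4\}$. Then $a\cap b=b\cap c=a\cap c=\{1\}$, which is exactly the configuration of the Remark following Corollary~\ref{wn:1}: the vertex $\{a,b,c\}$ is isolated in $\graph(\{a,b,c\})$. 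Moreover each of $a,b,c$ contains the nonempty element $\{1\}$ of the algebra generated by the other two, so your ``endmost atom'' selection cannot produce an element satisfying the hypothesis of Corollary~\ref{wn:2}. Since $\cH$ is (an isomorphic copy of) \emph{all} of $\clop(L)$, and $L$ can certainly be a finite discrete space, such bad finite subfamilies will occur inside $\cH$.

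What is actually needed is a \emph{specific} generating family of $\clop(L)$ with the comparable-or-disjoint property, and this does not come for free from the order on $L^*$. The paper invokes a theorem of Heindorf \cite{He97}: every subalgebra of an interval algebra is generated by a family in which any two elements are comparable or disjoint. Applying this to $\clop(L)$, viewed as a subalgebra of $\clop(L^*)$, yields such a family $\cG$, and then Corollary~\ref{wn:3} finishes the proof immediately. This structural result about subalgebras of interval algebras is the missing ingredient in your argument.
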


\begin{proof}
Since $L$ is a continuous image of $L^*$, $\clop(L)$ may be seen as a subalgebra of $\clop(L^*)$. This implies  that
$\clop(L)$ is generated by a family $\cG$ such that every $a,b\in\cG$ are either comparable or disjoint, see Heindorf \cite{He97}.
Hence we finish the proof applying Corollary \ref{wn:3}.
\end{proof}

In connection with the above result, it is worth recalling that the class of continuous images of compact lines coincides with the
class of monotonically normal compact spaces; this highly nontrivial result is due to Mary Ellen Rudin who proved a long-standing conjecture, see \cite{Ru01}.

Let us recall that a $\pi$-base of a topological space is a collection $\mathcal{B}$ of non-empty open sets such that every non-empty open set contains an element of $\mathcal{B}$. The $\pi$-weight of the space is the minimal cardinality of a $\pi$-basis. This cardinal invariant lies between the density character and the weight of the space.

\begin{cor}\label{wn:5}
For every zero-dimensional compact space $L$ there exists a zero-dimensional continuous image $L_1$ of $L$ whose weight equals the $\pi$-weight of $L$ such that $L_1$ is the Boolean image of a connected compact space.
\end{cor}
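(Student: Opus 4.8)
The plan is to realize $L_1$ as the Stone space of a subalgebra of $\clop(L)$ generated by a well-chosen family of clopen sets. First I would pass to the Stone representation: $L$ is the Stone space of the Boolean algebra $\clop(L)$. A $\pi$-base of $L$ corresponds to a dense subset of $\clop(L)\sm\{\emptyset\}$ in the natural order, so fix $\mathcal{D}\sub\clop(L)\sm\{\emptyset\}$ dense of cardinality $\kappa=\pi w(L)$. The subalgebra $\mathfrak{A}=\la\mathcal{D}\ra$ has cardinality $\kappa$ (since $\kappa$ is infinite), and its Stone space $L_1$ is a continuous image of $L$ (the inclusion $\mathfrak{A}\hookrightarrow\clop(L)$ dualizes to a surjection) of weight $\kappa$; on the other hand $w(L_1)\ge\pi w(L_1)\ge\pi w(L)$ because a dense family of clopens in $L_1$ pulls back to a $\pi$-base of $L$, so $w(L_1)=\pi w(L)$ as required. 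The one subtlety is the degenerate case where $L$ is finite or has isolated points making $\pi w(L)$ finite; then $L_1$ can be taken finite, hence trivially a Boolean image of a (one-point, or finite) connected space, so I would dispose of that case separately.

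The heart of the matter is arranging that $\clop(L_1)=\mathfrak{A}$ is generated by a family to which Corollary \ref{wn:3} (or at least Corollary \ref{wn:2}) applies. The natural idea is to make $\mathcal{D}$ a tree-like family: I want a generating set $\cG$ such that any two members are comparable by inclusion or disjoint. The key step is therefore to prove that \emph{every Boolean algebra has a dense subset that can be refined to such a tree-family of the same cardinality} — more precisely, that one can choose $\mathcal{D}$ dense of size $\kappa$ and then build $\cG\sub\clop(L)$ of size $\kappa$ with $\mathcal{D}\sub\la\cG\ra$ and $\cG$ consisting of pairwise comparable-or-disjoint sets. Concretely I would construct $\cG$ by recursion on an enumeration $(d_\alpha)_{\alpha<\kappa}$ of $\mathcal{D}$: having built a tree-family $\cG_\alpha$ generating a subalgebra containing $\{d_\beta:\beta<\alpha\}$, I need to add finitely many clopen sets to "split" the existing atoms-of-the-finite-subalgebra along $d_\alpha$ while keeping the comparable-or-disjoint property; this is possible because each atom $b$ of the generated finite subalgebra is a clopen set and I may adjoin $b\cap d_\alpha$ and $b\sm d_\alpha$, each contained in $b$ and disjoint from everything outside $b$ — and crucially these refinements never destroy the tree structure since any earlier generator either contains $b$ or is disjoint from $b$. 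Taking $\cG=\bigcup_{\alpha<\kappa}\cG_\alpha$, one has $|\cG|\le\kappa$, $\la\cG\ra\supseteq\la\mathcal{D}\ra$, and every two members of $\cG$ are comparable or disjoint.

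With such a $\cG$ in hand, set $L_1$ to be the Stone space of $\la\cG\ra$; this is a continuous image of $L$ (since $\la\cG\ra\sub\clop(L)$) and has weight $|\la\cG\ra|=\kappa=\pi w(L)$ by the density argument of the first paragraph. By Corollary \ref{wn:3} there is $\cG'\sub\cG$ with $\la\cG'\ra=\la\cG\ra$ that is isomorphic to a family of pseudoclopens in a compact connected space $K$. Since $\la\cG'\ra=\clop(L_1)$, the family $\cG'$ in particular separates the points of $L_1$, so by definition $L_1$ is a Boolean image of the connected compactum $K$, which completes the proof.

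I expect the main obstacle to be the recursive construction in the second paragraph: one must verify that adjoining the finitely many refinement sets $b\cap d_\alpha$, $b\sm d_\alpha$ across all atoms $b$ preserves the global comparable-or-disjoint property with respect to \emph{all} previously chosen generators (not just the atoms), and that the cardinality bound $|\cG|\le\kappa$ survives — each step adds only finitely many sets, so $|\cG|\le\kappa\cdot\aleph_0=\kappa$, but this needs the bookkeeping to be stated cleanly. The weight/$\pi$-weight bookkeeping and the handling of the finite degenerate case are routine by comparison.
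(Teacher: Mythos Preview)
Your recursive construction of the tree family does not go through. Once $\alpha\ge\omega$ the algebra $\la\cG_\alpha\ra$ is no longer finite, so there is no ``finite subalgebra'' whose atoms you can split; more fundamentally, a tree family sometimes cannot be \emph{extended} at all so as to capture a prescribed clopen in its span. Take $L=2^\omega\times\{0,1\}$, put $g_n=\{x\in 2^\omega: x_0=\cdots=x_{n-1}=0,\ x_n=1\}\times\{0,1\}$ and $d=2^\omega\times\{0\}$. The family $\{g_n:n<\omega\}$ is pairwise disjoint (hence a tree family) and $d$ splits every $g_n$ nontrivially. Suppose $\cG'\supseteq\{g_n:n<\omega\}$ is a tree family with $d\in\la\cG'\ra$, and write $d$ as a Boolean combination of finitely many $h_1,\ldots,h_k\in\cG'$. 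Each $h_j$ is comparable with or disjoint from every $g_n$, so the only way the atoms of $\la h_1,\ldots,h_k\ra$ can separate two points inside a fixed $g_n$ is for some $h_j$ to be a nonempty proper subset of $g_n$; distinct $n$ then require distinct such indices, which is impossible since $k<\omega$. Thus with the enumeration $d_0=g_0,\ d_1=g_1,\ldots,\ d_\omega=d$ your recursion has all $g_n$ inside $\cG_\omega$ and is stuck at step $\omega$: no tree family extending $\cG_\omega$ generates $d$.

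The paper sidesteps this by aiming for Corollary~\ref{wn:2} rather than Corollary~\ref{wn:3}. One builds $\{a_\alpha:\alpha<\kappa\}\sub\clop(L)$ by a one-line recursion: since $\la a_\beta:\beta<\alpha\ra$ has fewer than $\kappa=\pi w(L)$ elements, its nonzero members do not form a $\pi$-base, so some nonempty clopen $a_\alpha$ contains no nonempty element of that subalgebra. In any finite $\cF\sub\{a_\alpha\}$ the element of largest index witnesses the hypothesis of Corollary~\ref{wn:2}, so $\{a_\alpha:\alpha<\kappa\}$ is isomorphic to a family of pseudoclopens of a connected compactum, and the Stone space of $\la a_\alpha:\alpha<\kappa\ra$ is the required $L_1$. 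Your Stone-duality reduction and the weight bookkeeping are fine; it is only the tree-building step that must be replaced.
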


\begin{proof}
Let $\kappa$ be the $\pi$-weight of $L$. We construct a transfinite sequence $\{a_\alpha : \alpha<\kappa\}$ of clopen subsets of $L$ such that, for every $\alpha<\kappa$, $a_\alpha$ does not contain any non-empty element of the algebra generated by  $\{a_\beta : \beta<\alpha\}$. This can be done by induction since we know that $\{a_\beta : \beta<\alpha\}$ cannot be a $\pi$-basis, as its cardinality is less than the $\pi$-weight of $L$.

 Then the family $\mathcal{G} = \{a_\alpha : \alpha<\kappa\}$ satisfies the assumption of  Corollary \ref{wn:2},
 so $\cG$ is isomorphic to a family of pseudoclopen of a connected compactum.
 Hence  the Stone space $L_1$ of the algebra generated by $\cG$ has the required property.
\end{proof}

Recall that a family $\mathcal{G}$ of elements of a Boolean algebra is {\em irredundant} if  $a\notin \la\mathcal{G}\sm\{a\}\ra$
for every $a\in\cG$.
We shall say that $\mathcal{G}$ is {\em strongly irredundant} if  for every disjoint $\mathcal{F},\mathcal{F}'\subset \mathcal{G}$,
the algebra $\la\cF\ra\cap \la\cF'\ra$ is trivial.

\begin{cor}\label{wn:6}
If $\mathcal{G}$ is isomorphic to a family of pseudoclopens of a connected compactum, then $\mathcal{G}$ is strongly irredundant, so in particular irredundant.
\end{cor}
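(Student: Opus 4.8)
The plan is to derive strong irredundancy directly from the combinatorial characterization in Theorem~\ref{connected}, namely that for every nonempty finite $\cF\sub\cG$ the graph $\graph(\cF)$ is connected. Suppose toward a contradiction that $\cG$ fails to be strongly irredundant: there are disjoint $\cF,\cF'\sub\cG$ and a nontrivial element $c$ (i.e.\ $c\notin\{\emptyset,X\}$, writing $X$ for the whole Stone space) with $c\in\la\cF\ra\cap\la\cF'\ra$. Since being nontrivial is preserved under passing to finite subfamilies that still generate $c$, I may assume $\cF$ and $\cF'$ are finite, and I put $\cH=\cF\cup\cF'$, still a nonempty finite subfamily of $\cG$. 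The strategy is to show $\graph(\cH)$ is disconnected, contradicting $(ii)\Rightarrow$ connectedness.

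The key observation is that because $c\in\la\cF\ra$, membership of an atom of $\la\cH\ra$ in $c$ depends only on the trace of that atom on $\cF$; symmetrically, since $c\in\la\cF'\ra$, it depends only on the trace on $\cF'$. Translating to vertices of $\graph(\cH)$: each vertex $\cY\sub\cH$ corresponds to a nonempty atom $\bigcap\cY\sm\bigcup(\cH\sm\cY)$, and this atom is either entirely inside $c$ or entirely inside $X\sm c$. This partitions the vertex set of $\graph(\cH)$ into two classes $\cV_0$ (atoms inside $c$) and $\cV_1$ (atoms inside $X\sm c$), both nonempty since $c$ and its complement are nonempty and each contains some atom of the finite algebra $\la\cH\ra$. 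Now I claim no edge of $\graph(\cH)$ crosses this partition. An edge joins $\cY,\cZ$ with $\cY\btu\cZ=\{a\}$ for a single $a\in\cH$. If $a\in\cF$, then $\cY$ and $\cZ$ have the same trace on $\cF'$, so by the $\la\cF'\ra$-description of $c$ the corresponding atoms lie on the same side of $c$; if $a\in\cF'$, the same argument using the $\la\cF\ra$-description applies. Since $\cH=\cF\cup\cF'$, every edge is of one of these two types, so every edge stays within $\cV_0$ or within $\cV_1$. Thus $\graph(\cH)$ is disconnected, contradicting Theorem~\ref{connected}. Hence $\cG$ is strongly irredundant, and taking $\cF'=\cG\sm\{a\}$, $\cF=\{a\}$ (or rather: if $a\in\la\cG\sm\{a\}\ra$ then already $a\in\la\cF'_0\ra$ for some finite $\cF'_0$, and $a$ is a nontrivial element of $\la\{a\}\ra\cap\la\cF'_0\ra$ whenever $a\notin\{\emptyset,X\}$, while the degenerate cases $a\in\{\emptyset,X\}$ are handled separately) we get ordinary irredundance as a special case.

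The one point that needs a little care, and which I expect to be the main (mild) obstacle, is the bookkeeping around \emph{trivial} versus \emph{nontrivial} elements and the reduction to finite subfamilies: one must make sure that the witnessing element $c$ can be taken nontrivial after restricting $\cF,\cF'$ to finite sets, and that $\la\cH\ra$ genuinely has atoms on both sides of $c$. This is routine: if $c\in\la\cF\ra\cap\la\cF'\ra$ is nontrivial with $\cF,\cF'\sub\cG$ arbitrary, express $c$ by a Boolean combination of finitely many elements of $\cF$ and also of finitely many elements of $\cF'$, and replace $\cF,\cF'$ by these finite subsets; disjointness is inherited, $c$ is still nontrivial, and $\la\cH\ra$ is a finite algebra whose atoms refine both $c$ and $X\sm c$. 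The edge-non-crossing argument then goes through verbatim as above. Finally, for the ``in particular irredundant'' clause, note that strong irredundancy with $\cF=\{a\}$, $\cF'=\cG\sm\{a\}$ forces $\la\{a\}\ra\cap\la\cG\sm\{a\}\ra$ to be trivial, which precludes $a\in\la\cG\sm\{a\}\ra$ unless $a$ itself is trivial.
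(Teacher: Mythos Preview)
Your proof is correct and follows essentially the same route as the paper: both argue by contradiction, take a nontrivial $c\in\la\cF\ra\cap\la\cF'\ra$ with $\cF,\cF'$ finite and disjoint, partition the vertices of $\graph(\cF\cup\cF')$ according to whether the corresponding atom lies in $c$ or in its complement, and then observe that an edge changes only a single element of $\cF$ or of $\cF'$, so the trace on the other family is preserved and hence membership in $c$ is preserved. The paper phrases the edge-preservation step slightly differently (projecting a vertex $\cZ$ to $\cZ\cap\cF$ and noting this gives an atom of $\la\cF\ra$ contained in $c$), but the content is identical.
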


\begin{proof}
Suppose $x$ is a nonzero element in $\la\cF\ra\cap\la\cF'\ra$, where   $\mathcal{F},\mathcal{F}'\subset \mathcal{G}$ are finite and disjoint.
We consider the graph $G=\graph(\mathcal{F}\cup \mathcal{F}')$.

Let $\cZ$ be a vertex in $G$ such that
$$\bigcap\cZ\sm\bigcup\left( (\cF\cup\cF') \sm\cZ\right)\subset x.$$
Then for $\cY=\cZ\cap\cF$, putting
$y=\bigcap\cY\sm\bigcup(\cF\sm\cY)$,  we have $y\cap x\neq\emptyset$. But $y$ is an atom of $\la\cF\ra$ and $x\in \la\cF\ra$ so $y\sub x$.
The same observation applies to $\cF'$.

It follows that if $\cZ_1$ is another vertex in $G$ and
$\left| \cZ\btu\cZ_1\right|=1$ then again $$\bigcap\cZ_1\sm\bigcup(\cF\cup\cF'\sm\cZ_1)\sub x.$$
Since $G$ is connected by Theorem \ref{connected}, it follows that every atom of $\la\cF\cup\cF'\ra$ is contained in $x$; hence  $x=1$, and the
proof is complete.
\end{proof}

\begin{remark}
One can prove Corollary \ref{wn:6} directly using the notation given in the introductory section. This is an outline of  such a proof: Suppose that
$x\in \la\cF\ra\cap\la\cF'\ra$, where   $\mathcal{F},\mathcal{F}'\subset \mathcal{G}$ are finite and disjoint.
Then $x=P(a_1,\ldots, a_n)=Q(b_1,\ldots, b_k)$ for some Boolean polynomials $P$ and $Q$, and $a_i, b_i\in\cG $ all distinct.
If $\phi$ is an isomorphism between $\cG$ and pseudoclopens in a connected space $K$ then
$$P(\phi(a_1),\ldots)^+\sub Q(\phi(b_1),\ldots)^-\sub P(\phi(a_1),\ldots)^-,$$
which implies that the set $P(\phi(a_1),\ldots)^+=P(\phi(a_1),\ldots)^-$ is clopen so it is  either $\emptyset$ or $K$. It follows that   $x$ is trivial.
\end{remark}

As we have seen, Boolean images $L$ of connected spaces have an irredundant family of clopen sets that separates the points. This easily implies that the algebra $\clop(L)$  has coinitiality  $\aleph_0$, i.e.\ it is a union of increasing countable sequence of its proper subalgebras.
Such a property is not shared by $\clop(L)$ where $L$ is a  compact $F$-spaces, see Geschke \cite{coinitiality}. In particular, neither $\beta\omega$ nor any extremally disconnected compact spaces is a Boolean image of a compact connected space.

\begin{example}
There is a strongly irredundant finite family $\cG$
which is not isomorphic to a family of pseudoclopens in a connected space.

Let $X$ be the set of all $x\in\{0,1\}^5$ such that $|\{i: x_i=0\}|\neq 3$.
Consider $E_i=\{x\in X: x_i=1\}$ and $\cG = \{E_1,E_2,E_3,E_4,E_5\}$.
Then $\graph(\cG)$ is not connected since  every  vertex $\cY\sub\cG$ satisfies either   $|\cY|\ge 3$ or $|\cY|\le 1$
and those two parts of the graph are not connected by an edge.

On the other hand, $\cG$ is strongly irredundant. If not, there exists a partition $\cG = \cF \cup\cH$ , and a nontrivial element $A \in\la\cF\ra\cap \la\cH\ra$.
By symmetry we can assume that $\cF\sub \{E_1,E_2\}$; suppose for instance that
$\cF = \{E_1,E_2\}$, the other cases are similar.
Take $x\in A$ and $y\in X\sm A$.
Then for $x'=(x_1,x_2,1,1,1)$ and $y'=(y_1,y_2,1,1,1)$ we have $x',y'\in X$ and $x'\in A$, $y'\notin A$ because $A$ is determined by the first two coordinates.
But then $A\notin \la\cH\ra$ since $A$ is not determined by the remaining three coordinates.
\end{example}

The next result gives  another example of compact space which is not a Boolean image of a connected compactum.

\begin{prop}
If $L$ is a separable nonmetrizable scattered space of height 3, then there is no uncountable strongly irredundant family of clopen subsets of $L$. Therefore $L$ is not a Boolean image of a connected compact space.
\end{prop}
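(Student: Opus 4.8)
The plan is to exploit the rigid structure that separability forces on $L$. First I would record the shape of $L$: since every isolated point lies in every dense set, separability gives that the set $I$ of isolated points is countable, while compactness together with height $3$ makes $L^{(2)}$ a closed discrete, hence finite, subset; consequently $Q=L^{(1)}\sm L^{(2)}$ is uncountable (as $L$ is scattered and nonmetrizable, hence uncountable). Each $q\in Q$ has a clopen neighbourhood meeting $L^{(1)}$ only in $q$, which is a convergent sequence $\{q\}\cup A_q$ with $A_q\sub I$ infinite, and the family $\{A_q:q\in Q\}$ is almost disjoint (distinct $A_q$ have unique, hence different, limits). Because $I$ is dense, a clopen set is determined by its trace on $I$, so $U\mapsto U\cap I$ is a Boolean embedding of $\clop(L)$ into $\p(I)\cong\p(\en)$; moreover $q\in U$ iff $A_q\sm U$ is finite, $q\notin U$ iff $A_q\cap U$ is finite, and $U\cap Q$ is finite or cofinite within each ``block'' of level-$1$ points clustering at a fixed point of $L^{(2)}$.

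Now suppose, towards a contradiction, that $\cG\sub\clop(L)$ is an uncountable strongly irredundant family, all of whose members I may take nontrivial. I would pass to an uncountable $\cG_0$ on which the finite data is constant: the same subset of $L^{(2)}$ is contained in each $U$, and each block has the same (finite or cofinite) type. The blocks carrying only countably many level-$1$ points contribute a countable Boolean algebra, so I can freeze the members of $\cG_0$ on all of them, and after complementing the whole family (which preserves strong irredundance, since each $U$ and $U^c$ generate the same subalgebra) I arrange that the distinguished uncountable block is of \emph{finite} type. Thus, on the relevant part, each $U\in\cG_0$ meets $Q$ in a finite set $\sigma(U)=U\cap Q$ and its trace satisfies $a_U:=U\cap I =^{*}\bigcup_{q\in\sigma(U)}A_q$. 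A $\Delta$-system argument then yields an uncountable $\cG_1\sub\cG_0$ and a finite root $R$ with $\sigma(U)\cap\sigma(U')=R$ for distinct $U,U'$, the petals $\sigma(U)\sm R$ being nonempty (otherwise uncountably many members would agree up to a finite subset of $I$, forcing two to coincide).

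The combinatorial heart is then a matching lemma. For $U\ne U'$ in $\cG_1$, almost disjointness gives $a_U\cap a_{U'}=^{*}\bigcup_{q\in R}A_q$, so the colour $c(U,U')=a_U\cap a_{U'}$ differs from the fixed set $\bigcup_{q\in R}A_q$ by a finite set and hence takes only countably many values. I would then invoke: if a countable colouring of the pairs of an uncountable set admits no two disjoint pairs of equal colour, then each colour class, viewed as a graph, has no two disjoint edges and is therefore a star or a triangle; but countably many stars and triangles cannot cover the complete graph on uncountably many vertices, since an edge between two non-centre vertices stays uncovered. Hence some colour is realised by two disjoint pairs $\{U_1,U_2\}$ and $\{U_3,U_4\}$; and one can insist the common colour be nonempty, for otherwise the traces $a_U$ would be pairwise disjoint on an uncountable subfamily, giving uncountably many pairwise disjoint nonempty subsets of the countable set $I$. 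For such a matching $a_{U_1}\cap a_{U_2}=a_{U_3}\cap a_{U_4}$, so by injectivity of the trace the clopen set $x=U_1\cap U_2=U_3\cap U_4$ is literally the same; it is nonempty and, lying inside $U_1\ne L$, proper. Thus $x$ is a nontrivial element of $\la\{U_1,U_2\}\ra\cap\la\{U_3,U_4\}\ra$ with disjoint index sets, contradicting strong irredundance.

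For the last assertion I would note that, $L$ being nonmetrizable, every point-separating subfamily of $\clop(L)$ is uncountable (a countable one would embed $L$ into a metrizable cube). Hence $L$ admits no point-separating strongly irredundant family, and by Corollary~\ref{wn:6} it is not a Boolean image of a connected compact space. The step I expect to be most delicate is the reduction in the second paragraph: freezing the countably supported blocks and complementing works cleanly when a single point of $L^{(2)}$ carries the uncountably many level-$1$ points, but when several do and their types are mixed (some finite, some cofinite) neither $U_1\cap U_2$ nor $U_1\cup U_2$ is simultaneously ``matchable'' on all blocks. There one must either splice across blocks---legitimate for the point-separating families relevant to the corollary, since those generate $\clop(L)$ and so make each clopen block available---or refine the colouring block by block. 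This, rather than the matching lemma itself, is where the real work lies.
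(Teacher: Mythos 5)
Your overall strategy is the same as the paper's: both arguments aim to produce two disjoint pairs $\{U_1,U_2\}$, $\{U_3,U_4\}$ in the family whose intersections are \emph{literally} equal, nonempty and proper, which contradicts strong irredundance. The combinatorial core of your write-up is correct but heavier than necessary: where you invoke a star-or-triangle matching lemma for countable colourings of pairs, the paper simply partitions the family into uncountably many doubletons $\{a_\gamma,b_\gamma\}$, notes that after a $\Delta$-system refinement of the (finite) traces on $L'$ all the intersections $a_\gamma\cap b_\gamma$ agree modulo finite subsets of the countable set of isolated points, and pigeonholes to find two that coincide exactly; nonemptiness is secured beforehand by fixing one point of the countable dense set belonging to every member of an uncountable subfamily, which is cleaner than your ``otherwise the traces would be pairwise disjoint'' detour (the matching lemma by itself could hand you two disjoint pairs of colour $\emptyset$, which is no contradiction).

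The genuine gap is exactly the one you flag yourself: making the relevant traces finite when $L^{(2)}$ has several points. The paper disposes of this by asserting at the outset that one may take $L''=\{\infty\}$, after which complementing the members containing $\infty$ makes every trace on $L'$ finite and the pair argument runs. Your attempt to treat a general finite $L^{(2)}$ stalls precisely where you predict: after normalizing so that every $U$ has the same trace $S$ on $L^{(2)}$ with $\emptyset\neq S\subsetneq L^{(2)}$, set $M=\bigcup_{p\in S}L_p$ and run the $\Delta$-system on the finite sets $(U\bigtriangleup M)\cap L'$, with root $R$ and pairwise disjoint nonempty petals $P_U$; if uncountably many petals meet both $M$ and its complement, then a short computation of traces shows that \emph{no} binary Boolean combination of a disjoint pair can nontrivially coincide with one of another disjoint pair, so no refinement of a colouring of pairs can succeed, and your fallback (splicing, or restricting to point-separating families) would prove only the corollary rather than the proposition as stated. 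The case can nevertheless be closed by passing from pairs to triples and from intersections to the majority function $\mathrm{maj}(U_1,U_2,U_3)=(U_1\cap U_2)\cup(U_1\cap U_3)\cup(U_2\cap U_3)$: since each level-one point lies in at most one petal, at least two of the three memberships agree with the generic value determined by $M\bigtriangleup R$, so $\mathrm{maj}(U_1,U_2,U_3)\cap L'=S\cup\bigl((M\bigtriangleup R)\cap (L'\sm L^{(2)})\bigr)$ is the same for every triple; the majorities of disjoint triples then agree modulo finite subsets of $I$, the pigeonhole applies, and the common value is nontrivial because $\emptyset\neq S\subsetneq L^{(2)}$. So you have located a real difficulty (one the paper's own ``we can suppose $L''$ is a single point'' also leaves implicit) rather than missed an easy step, but as written your argument does not yet prove the proposition.
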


\begin{proof}
We can suppose that $L''$ the second Cantor-Bendixson derivative of $L$ consists of exactly one point, that we call $\infty$. Suppose that we have an uncountable strongly irredundant family $\mathcal{G}$ of clopen subsets of $L$. By switching some elements to their complement, we can suppose that no clopen in $\mathcal{G}$ contains $\infty$. By passing to an uncountable family we can suppose that there is one element of the countable dense set that belongs to all clopens in $\mathcal{G}$. We can also suppose that $\{a\cap L' : a\in\mathcal{G}\}$ is a $\Delta$-system, since all those sets are finite. Consider a partition $\{a_\gamma,b_\gamma\}_{\gamma\in\Gamma}$  of
 $\mathcal{G}$ into doubletons. All intersections $a_\gamma\cap b_\gamma \cap L'$ are equal to the root of this $\Delta$-system, hence all $a_\gamma\cap b_\gamma$ are equal modulo finite. Hence, since there are only countably many possibilities for $a_\gamma\cap b_\gamma\setminus L'$, there are two such intersections which are equal. They are moreover  nonempty, since we fixed an element of the dense set which is in  all clopens. This implies that $\mathcal{G}$ is not strongly irredundant.
\end{proof}

Let us consider now, for $n=1,2,3,\ldots$ and an uncountable set $\Gamma$, the zero-dimensional compact space
$$\sigma_n(\Gamma) = \left\{x\in 2^\Gamma : |\{\gamma : x_\gamma=1\}| \leq n\right\}.$$
If $n=1$, this is just the one-point compactification of a discrete set of size $|\Gamma|$. It is an easy exercise to check that $\sigma_1(\Gamma)$ is a Boolean image of $K$ if and only if $K$ contains $|\Gamma|$ many pairwise disjoint open sets.
We shall prove below, see Proposition \ref{5.5}, that the spaces $\sigma_n(\Gamma)$  are Boolean images of convex compact spaces. The question about subspaces of $\sigma_n(\Gamma)$ seems to be more delicate. The first nontrivial case is $n=2$,

\begin{prop}
If $L$ is a compact subspace of $\sigma_2(\Gamma)$ then $L$ is a Boolean image of a compact connected space.
\end{prop}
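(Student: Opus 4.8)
The plan is to exhibit a family $\cG\sub\clop(L)$ that separates the points of $L$ and for which every finite $\cF\sub\cG$ has connected graph $\graph(\cF)$; by Theorem \ref{connected} this is exactly the assertion that $\cG$, which separates the points of $L$, is isomorphic to a family of pseudoclopens in a compact connected space, i.e.\ that $L$ is a Boolean image of such a space.

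\emph{Structure of $L$.} Embedding $x\mapsto(1_{F_\gamma}(x))_\gamma$ we may assume $L\sub 2^\Gamma$, that $L\sub\sigma_2(\Gamma)$, and that every coordinate is used by some point of $L$; thus each point of $L$ is a subset of $\Gamma$ of size at most two. Attach to $L$ the graph $G$ on the vertex set $\Gamma$ whose edges are the two-element points of $L$, and set $N=\{v\in\Gamma:\{v\}\in L\}$. The compactness facts we need are: every two-element point of $L$ is isolated in $L$, so $\{p\}\in\clop(L)$ for such $p$; if $v$ has infinitely many neighbours in $G$ then $\{v\}\in L$ (the ultralimit of the edges at $v$ along a non-principal ultrafilter equals $1_{\{v\}}$); and, in the same way, every accumulation point of the set of two-element points of $L$ is either $\{v\}$ with $v$ of infinite degree, or is $\emptyset$ — so if $G$ has infinitely many edges with no finite vertex cover then $\emptyset\in L$, and in all cases $L\cap\sigma_1(\Gamma)$ is a compact subspace of the one-point compactification $\sigma_1(\Gamma)$.

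\emph{Construction of $\cG$.} Let $\Gamma^{\infty}$ be the set of vertices of infinite degree (so $\Gamma^{\infty}\sub N$) and let $H=G[\Gamma\sm\Gamma^{\infty}]$ be the subgraph on the finite-degree vertices. Put $F_v\in\cG$ for every $v\in\Gamma^{\infty}$. For the finite-degree part fix a maximal forest $T$ of $H$; in each tree of $T$ choose a root, and for each non-root vertex $v$ put into $\cG$ the clopen $S_v$ consisting of the points of $L$ lying strictly below $v$ in $T$ (the edges of $T$ in the subtree rooted at $v$, together with the one-element points $\{w\}$ for $w$ below $v$); using the compactness facts one checks $S_v\in\clop(L)$ and that the $S_v$ form a \emph{laminar} family (comparable vertices give comparable cones, incomparable ones give disjoint cones). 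Finally adjoin to $\cG$ the singleton clopens $\{e\}$ for the finitely-many-per-component non-forest edges $e$ of $H$, and, if needed, make one small adjustment (dropping one further cone, or adding one edge-singleton) so that there is a point $p_0\in L$ contained in no member of $\cG$ — automatic when $\emptyset\in L$. This basepoint is what forces connectivity: since $p_0$ lies in no member of $\cG$, the empty set is a vertex of $\graph(\cF)$ for \emph{every} finite $\cF\sub\cG$, and the laminar-plus-isolated-points shape of $\cG$ makes $\graph(\cF)$ into a tree-like graph rooted at that vertex; the required connectedness then follows exactly as in Lemma \ref{connected:finite}, by reducing from each cone $S_v$ to the immediately smaller cones and from each edge-singleton to the empty-set vertex. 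That $\cG$ separates the points of $L$ holds because $\Gamma^{\infty}$ distinguishes the edges through high-degree vertices while the forest cones distinguish everything inside each finite-degree component.

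\emph{Main obstacle.} The difficulty lies not in a single device but in running this through \emph{in the presence of compactness}: for infinite $\Gamma$, $G$ may have infinitely many components, infinitely many vertices of infinite degree, and edges accumulating onto one-element points or onto $\emptyset$, and one must place the roots of the forest so that the cones $S_v$ are genuinely clopen, weave the one-element points, the non-forest edges and $\emptyset$ into the family so that laminarity and point-separation survive across different components and across accumulation points, and secure the basepoint $p_0$. The degenerate configurations ($L$ a single point or a single edge, $|\Gamma|\le 2$, or $L\sub\sigma_1(\Gamma)$) are handled separately and are immediate. These verifications — together with the observation that without such care the naive family $\{F_\gamma:\gamma\in\Gamma\}$ may fail to have connected graphs, being for instance redundant, which by Corollary \ref{wn:6} it is not allowed to be — are the substance of the proof.
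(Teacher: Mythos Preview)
Your outline has the right overall strategy (build a separating family and invoke Theorem~\ref{connected}), but the specific family you propose does not work, and the difficulties are not merely ``verifications''.

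First, the cones $S_v$ need not be clopen. Take $\Gamma=\omega$ and $L=\{0\}\cup\{1_{\{n,n+1\}}:n<\omega\}$. Then every vertex has degree $\le 2$, so $\Gamma^\infty=\emptyset$, $H=G$ is the infinite path, $T=H$, and with root $0$ the subtree below $v=1$ is all of $\{1,2,\ldots\}$. Your $S_1=\{1_{\{n,n+1\}}:n\ge 1\}$ accumulates at $0\notin S_1$, so $S_1$ is not closed. More generally, any infinite subtree in $T$ forces $\emptyset$ into the closure of $S_v$; adding $\emptyset$ to every such cone destroys laminarity. Second, edges $\{v,w\}$ with $v\in\Gamma^\infty$ and $w\notin\Gamma^\infty$ are not edges of $H$, hence lie in no $S_u$ and are not among your ``non-forest edges of $H$''; two such edges sharing the same $v$ are then not separated by anything in your $\cG$. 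Third, the parenthetical ``finitely-many-per-component non-forest edges'' is false: a locally finite component of $H$ can have infinitely many edges outside any spanning tree.

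The paper avoids all of this with a much simpler family. With $\Gamma_0=\{\gamma:1_{\{\gamma\}}\in L\}$ one takes $\cG$ to consist of the basic cylinders $E_\gamma=\{x\in L:x_\gamma=1\}$ for $\gamma\in\Gamma_0$, together with the singletons $E_t=\{1_t\}$ for those doubletons $t$ with $1_t\in L$ and $t\not\subset\Gamma_0$. These are manifestly clopen, they separate points, any three distinct members have empty intersection, and every vertex of $\graph(\cF)$ has size $\le 2$ and is joined to the vertex $\emptyset$ by a path of length $\le 2$. No graph-theoretic surgery on $G$ is needed.
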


\begin{proof}
Let $\Gamma_0=\{\gamma\in \Gamma: 1_{\{\gamma\}}\in L\}$. Given finite $t\subset \Gamma$, let
$$E_t = \{x \in L : x_\gamma=1 \mbox{ for all } \gamma\in t\};$$
 clearly $E_t$ is a clopen subset of $L$. We write $E_\gamma$ rather than $E_{\{\gamma\}}$.

 Consider the family $\cG$ consisting of all $E_t$ such that either $t=\{\gamma\}$ and  $\gamma\in \Gamma_0$ or $|t|=2$, $1_t \in L$ and $t\not\subset \Gamma_0$.
 Then the family $\cG$ separates the points of $L$. Let us check that  $\cG$ satisfies  condition $(ii)$  of Theorem~\ref{connected}.

 The intersection of three different clopen from $\cG$ is always empty, while if
$a_1,a_2\in\cG$ and $a_1\cap a_2\neq\emptyset$ then

\begin{enumerate}[(i)]
\item either $a_i=E_{\{\gamma_i\}}$ for $\gamma_1,\gamma_2\in \Gamma_0$, and the intersection is $\{1_{\{\gamma_1,\gamma_2\}}\}$,
\item  or  we can assume that $a_1=E_{\gamma_0}$ for $\gamma_0\in \Gamma_0$ and $a_2=E_t$ where
$\gamma_0\in t$ and $t\not\sub \Gamma_0$, in which case the intersection is $a_2= \{1_t\}$.
\end{enumerate}

 It follows that whenever $\cF\sub\cG$ is finite then every vertex $\cY$ in $\graph(\cF)$ satisfies $|\cY|\le 2$ and $\cY$ is connected to the vertex $\emptyset\sub\cF$
 by a path in $\graph(\cF)$.
 \end{proof}

We do not know if   the above statement remains true  for subsets $\sigma_3(\Gamma)$.
Our candidate for a  counterexample is a subspace $L$ of $\sigma_3(\er)$, of the family of characteristic functions of all sets of the form $\{x,x+r,x+2r\}$, where $x,r\in\er$, together with the empty set. Every element of $L$ is then an arithmetic triple or a singleton, and
$L$ is clearly compact. It is easy to see that the obvious family of clopens $\{E_{\{x\}} : x\in\er\}$
is not isomorphic to a family of pseudoclopens in a connected compactum.

%However, the following observation shows that $L$ is a Boolean image
%of a compact connected space.
%
%\begin{remark}
%Suppose that $K\sub\sigma_3(\Gamma)$ is a compact space and there is $\Gamma_1\sub \Gamma$ such that $|\Gamma_1|=|\Gamma|$
%and $K$ contains $\sigma_2(\Gamma_1)$. Fix an injection $\vf: \Gamma\sm \Gamma_1\To \Gamma_1$.
%Then the family
%$$\{E_\gamma: \gamma\in\Gamma_1\}\cup\{ E_\gamma\cup E_{\vf(\gamma)}: \gamma\in\Gamma\sm\Gamma_1\},$$
%separates the points of $K$ and is isomorphic to a family of pseudoclopens in a connected compactum.
%\end{remark}

\section{Boolean images of separably connected spaces}\label{secsepcon}

In this section we give a proof of  Theorem \ref{alternative}, which is based on the  auxiliary result given below. We shall use here the
following notation: Given a family $\cG$ in $\clop(L)$ and $x,y\in L$, we write $\cG(x,y)$ for the family of those $a\in\cG$ which separate $x$ and $y$,
that is $|a\cap\{x,y\}|=1$.

\begin{thm}\label{decxy}
Let $K$ be a separably connected compact space, and let $\mathcal{G}$ be a family of clopen subsets of $L$ which is isomorphic to a family of pseudoclopens of $K$.

Then for every $x,y\in L$ there is a countable decomposition $\mathcal{G} = \bigcup_{n<\omega}\mathcal{G}_n$
such that for every $n$  and  every partition $\mathcal{G}_n = \mathcal{F}\cup \mathcal{H}$, if
$$\mathcal{F}\supset \{a\in\mathcal{G}_n : x,y\in a\} \mbox{  and }\mathcal{H}\supset\{a\in \mathcal{G}_n : x,y\not\in a\},$$
then $\bigcap\mathcal{F} \setminus \bigcup \mathcal{H}\neq\emptyset$.
In particular,
the family $\cG_n(x,y)$ is independent for every $n<\omega$.

\end{thm}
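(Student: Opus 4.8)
The plan is the following. For the fixed pair $x,y\in L$ (we may assume $x\neq y$, the case $x=y$ being trivial) I will first manufacture two ``canonical representatives'' $p,q\in K$, then join them by a connected separable subspace $S\sub K$ (possible since $K$ is separably connected), and finally index the decomposition $\cG=\bigcup_n\cG_n$ by a countable dense subset of $S$. Throughout, $\phi$ denotes the given isomorphism of $\cG$ onto a family of pseudoclopens of $K$. I claim there is $p\in K$ with $p\in\phi(a)^-$ whenever $x\in a\in\cG$ and $p\notin\phi(a)^+$ whenever $x\notin a\in\cG$: the sets $\phi(a)^-$ (for $a\ni x$) and $K\sm\phi(a)^+$ (for $a\not\ni x$) are closed, so by compactness of $K$ it is enough to check that they have the finite intersection property; but if $\bigcap_{a\in\cF_0}\phi(a)^-\sm\bigcup_{b\in\cH_0}\phi(b)^+=\emptyset$ for finite $\cF_0\sub\{a:x\in a\}$ and $\cH_0\sub\{b:x\notin b\}$, the second clause in the definition of an isomorphism would give $\bigcap_{a\in\cF_0}a\sm\bigcup_{b\in\cH_0}b=\emptyset$, contradicting $x\in\bigcap_{a\in\cF_0}a\sm\bigcup_{b\in\cH_0}b$. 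An analogous $q$ is produced for $y$. Since $\phi(a)^-\sub\phi(a)^+$, the converse properties also hold: $p\in\phi(a)^-\Rightarrow x\in a$ and $p\notin\phi(a)^+\Rightarrow x\notin a$, and likewise for $q,y$.

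Next, fix a connected separable $S\sub K$ with $p,q\in S$ and a countable dense set $\{d_n:n<\omega\}\sub S$. Call $a\in\cG$ \emph{$n$-ambiguous} if $d_n\in\phi(a)^+\sm\phi(a)^-$, and set $\cG_\infty=\{a:S\sub\phi(a)^-\}\cup\{a:S\cap\phi(a)^+=\emptyset\}$. If $a\notin\cG_\infty$ then, were $S\cap(\phi(a)^+\sm\phi(a)^-)$ empty, the relatively closed sets $S\cap\phi(a)^-$ and $S\sm\phi(a)^+$ would partition $S$, both being nonempty (by the definition of $\cG_\infty$), which contradicts connectedness of $S$; hence $S\cap(\phi(a)^+\sm\phi(a)^-)$ is a nonempty open subset of $S$ and so contains some $d_n$, i.e.\ $a$ is $n$-ambiguous for some $n$. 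Moreover every $a\in\cG_\infty$ is non-separating for $x,y$: if $S\sub\phi(a)^-$ then $p,q\in\phi(a)^-$, so $x,y\in a$ by the converse property above; if $S\cap\phi(a)^+=\emptyset$ then $p,q\notin\phi(a)^+$, so $x,y\notin a$. Now let $\cG_0$ consist of $\cG_\infty$ together with all $0$-ambiguous sets, and for $n\ge 1$ let $\cG_n$ consist of the $n$-ambiguous sets not already placed in $\cG_0,\dots,\cG_{n-1}$. By the above this is a decomposition $\cG=\bigcup_n\cG_n$, and for every $n$ and every $a\in\cG_n$ we have $d_n\in\phi(a)^+$ and $d_n\notin\phi(a)^-$, the only exceptions being $a\in\cG_\infty$ (which forces $n=0$), for which nonetheless $d_0\in\phi(a)^+$ if $x,y\in a$ and $d_0\notin\phi(a)^+$ if $x,y\notin a$.

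To verify the statement, fix $n$ and a partition $\cG_n=\cF\cup\cH$ with $\{a\in\cG_n:x,y\in a\}\sub\cF$ and $\{a\in\cG_n:x,y\notin a\}\sub\cH$. Combining the last sentence of the previous paragraph with these constraints, $d_n\in\phi(a)^+$ for every $a\in\cF$ and $d_n\notin\phi(b)^-$ for every $b\in\cH$ (the forced placement of the members of $\cG_\infty$ is exactly what makes this work for $n=0$). Hence, for any finite $\cF_0\sub\cF$ and $\cH_0\sub\cH$,
$$d_n\in\bigcap_{a\in\cF_0}\phi(a)^+\sm\bigcup_{b\in\cH_0}\phi(b)^-\neq\emptyset,$$
so the first clause in the definition of an isomorphism, in contrapositive form, gives $\bigcap_{a\in\cF_0}a\sm\bigcup_{b\in\cH_0}b\neq\emptyset$ in $L$. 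Since $L$ is compact and the members of $\cF$, $\cH$ are clopen, the clopen family $\cF\cup\{L\sm b:b\in\cH\}$ has the finite intersection property, whence $\bigcap\cF\sm\bigcup\cH\neq\emptyset$. For the ``in particular'' clause, given disjoint finite $\cF,\cH\sub\cG_n(x,y)$, extend them to a partition $\cG_n=\cF'\cup\cH'$ respecting the above constraints — the forced members go where they must, any still unplaced separating member goes into $\cF'$, and one uses that $\cG_n(x,y)$ is disjoint from $\cG_\infty$; then $\emptyset\neq\bigcap\cF'\sm\bigcup\cH'\sub\bigcap\cF\sm\bigcup\cH$, so $\cG_n(x,y)$ is independent.

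The main obstacle is to play connectedness and separability off against each other: connectedness is precisely what forces the ``ambiguous'' open set $S\cap(\phi(a)^+\sm\phi(a)^-)$ to be nonempty for every $a\notin\cG_\infty$, so that each $a$ falls into some $\cG_n$, while separability is what keeps the index set countable. The two remaining delicate points are the compactness-plus-isomorphism construction of the representatives $p,q$, and the check that collecting the ``non-separating, never-ambiguous'' sets $\cG_\infty$ into $\cG_0$ does no harm — this is exactly where the converse properties of $p$ and $q$ are used.
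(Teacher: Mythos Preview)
Your proof is correct and follows essentially the same approach as the paper: construct representatives $p,q\in K$ via compactness and the isomorphism, connect them by a separable connected $S\sub K$, and index the decomposition by a countable dense subset of $S$, using $d_n$ as the common witness for $\bigcap_{a\in\cF}\phi(a)^+\sm\bigcup_{b\in\cH}\phi(b)^-$. Your write-up is in fact more careful than the paper's in two respects: you explicitly isolate the ``non-ambiguous'' family $\cG_\infty$ and verify that dumping it into $\cG_0$ causes no harm (the paper's phrasing of $\cG_n$ leaves these sets in every piece), and you spell out the final compactness step passing from finite to arbitrary $\cF,\cH$.
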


\begin{proof}
Let $\phi:\mathcal{G}\To \tilde{\mathcal{G}}$ be an isomorphism of $\mathcal{G}$ with a family of pseudoclopens of $K$. Pick
$$x' \in \bigcap\{\phi(a)^- : a\in \mathcal{G}, x\in a\}\setminus \bigcup\{\phi(a)^+ : a\in \mathcal{G}, x\not\in a\},$$
$$y' \in \bigcap\{\phi(a)^- : a\in \mathcal{G}, y\in a\}\setminus \bigcup\{\phi(a)^+ : a\in \mathcal{G}, y\not\in a\}.$$
The fact that the two  sets appearing are nonempty follows immediately by the definition of an isomorphism and compactness.

Let $S $ be a separable connected subspace of $K$ that contains $x'$ and $y'$ and  let $D = \{d_n : n<\omega\}$ be a countable dense subset of $S $.
We consider a countable decomposition $\mathcal{G} = \bigcup_{n<\omega} \mathcal{G}_n$, where $\cG_n$ is defined by the condition
$$d_n\in S \cap \left( \phi(a)^+ \setminus \phi(a)^-\right) \mbox{ whenever } a\in \mathcal{G}_n \mbox{ and }S \cap \left( \phi(a)^+ \setminus \phi(a)^-\right)\neq\emptyset.$$
We fix now $n<\omega$, and take a partition  $\mathcal{F}, \mathcal{H}$ of $\mathcal{G}_n$
%We are to check that $\bigcap\mathcal{F} \setminus \bigcup \mathcal{H}\neq\emptyset$.
 such that
$$\mathcal{F}\supset \{a\in\mathcal{G}_n : x,y\in a\} \mbox{ and } \mathcal{H}\supset\{a\in \mathcal{G}_n : x,y\not\in a\}.$$
We claim that
$$(\star)\ \ d_n\in \bigcap\{\phi(a)^+ : a\in \mathcal{F}\} \setminus \bigcup \{\phi(a)^- : a\in \mathcal{H}\}.$$
To verify the claim, consider $a\in\cG_n$ and the following two cases.

Suppose that  $\left( \phi(a)^+ \sm \phi(a)^-\right)  \cap S\neq \emptyset$.  Then $d_n\in \phi(a)^+ \sm \phi(a)^-$ by the definition of $\cG_n$,
so, for instance,  $d_n\in \phi(a)^+$ no matter if $a_n$ is in $\cF$ or in $\cH$. Similarly,   $d_n\notin\phi(a)^-$.

Suppose now that  $\left( \phi(a)^+ \sm \phi(a)^-\right) \cap S = \emptyset$.
Since $S $ is connected, either $S \subset \phi(a)^-$ or $S  \subset K\setminus \phi(a)^+$.
If the former holds then $x',y' \in \phi(a)^-$, which implies that $x,y \in a$ (by the definition of $x',y'$).
 We conclude that $a\in \cF$ (by our hypothesis on $\cF, \cH$) and then we get that
 $d_n \in S \subset \phi(a)^- \subset \phi(a)^+$, as desired.
 If the latter occurs, i.e.\ $S\subset K\setminus \phi(a)^+$ then $x',y'\not\in \phi(a)^+$,  which implies that $x,y\not\in a$
 (by the definition of $x',y'$), which in turn implies that $a\in \cH$ (by our hypothesis on $\cF, \cH$) and then
 $$d_n\in S \subset K\setminus \phi(a)^+ .$$
This completes the proof of $(\star)$.
\medskip

 Now using the fact that   $\phi$ is an isomorphism  and $(\star)$ we deduce that $\bigcap\mathcal{F}\setminus \bigcup \mathcal{H} \neq\emptyset$, as required.
\end{proof}

As a corollary we obtain Theorem~\ref{alternative}, this is its proof:

\begin{proof}
Let $\mathcal{G}$ be a family of clopen subsets of $L$  that separates the points of $L$,   which is isomorphic to a family of pseudoclopens of $K$. Let $1_a$ denote the characteristic function of a set $a$, so that $1_a(x) = 1$ if $a\in x$ and $1_a(x) = 0$ if $x\not\in a$. We consider two cases.
\begin{enumerate}[(i)]
\item Suppose that for every $x,y\in L$, the family  $\cG(x,y)$ is countable. In this case, $L$ is Corson, because for a fixed $x\in L$, the map $y\mapsto (1_a(y) - 1_a(x))_{a\in\mathcal{G}}$ embeds $L$ into $\Sigma(\mathbb{R}^\mathcal{G})$.
\item In the remaining case there are  $x,y\in L$ such that $\cG(x,y)$ is uncountable. Consider then the decomposition $\mathcal{G} = \bigcup_{n<\omega}\mathcal{G}_n$ given by Theorem \ref{decxy}. Then  $\cY = \cG(x,y)\cap \mathcal{G}_n$ is an uncountable independent family for some $n$ so
    the mapping $f:L \To 2^\cY$ given by $f(x) = (1_a(x))_{a\in \cY}$ is a continuous surjection.
\end{enumerate}
The proof is complete.
\end{proof}

Let us now explore some consequences of Theorem~\ref{alternative}.

\begin{cor}
If $K$ is separably connected space that does not map onto $[0,1]^{\omega_1}$, then every Boolean image of $K$ is Corson compact.
\end{cor}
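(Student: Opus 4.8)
The plan is to argue by contraposition, using Theorem~\ref{alternative} together with the observation recorded in Section~2 (right after the statement that $2^\Gamma$ is a Boolean image of $[0,1]^\Gamma$) that a compact space having $2^\Gamma$ as a Boolean image maps continuously onto $[0,1]^\Gamma$. So I would fix a separably connected $K$ and a Boolean image $L$ of $K$ that is \emph{not} Corson compact, and aim to produce a continuous surjection $K\to[0,1]^{\omega_1}$, contradicting the hypothesis on $K$. By Theorem~\ref{alternative}, $L$ maps continuously onto $2^{\omega_1}$. The entire content of the proof is then to promote this to: $2^{\omega_1}$ is itself a Boolean image of $K$. Given that, the quoted observation (with $\Gamma=\omega_1$) immediately yields the desired surjection and finishes the proof.

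To carry out the promotion, let $\cG\sub\clop(L)$ be a point-separating family with an isomorphism $\phi$ onto a family of pseudoclopens of $K$, and let $g\colon L\to 2^{\omega_1}$ be a continuous surjection. The clopen sets $d_\xi=g^{-1}\{y:y_\xi=1\}$, $\xi<\omega_1$, form an independent family in $\clop(L)$, and via $g^{-1}$ the family $\{\{y:y_\xi=1\}:\xi<\omega_1\}$ separates the points of $2^{\omega_1}$. Since $\cG$ generates $\clop(L)$, each $d_\xi$ is a Boolean combination $P_\xi(a^\xi_1,\dots,a^\xi_{m_\xi})$ of finitely many members of $\cG$, and I would set $\psi(d_\xi)=P_\xi(\phi(a^\xi_1),\dots,\phi(a^\xi_{m_\xi}))$, a pseudoclopen of $K$ built via the inductive definition from Section~1. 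The task is to check that $\psi$ is an isomorphism: for distinct $\xi_1,\dots,\xi_k$ and a Boolean polynomial $Q$, write $Q(d_{\xi_1},\dots,d_{\xi_k})$ as a single polynomial $R$ in the distinct generators occurring; then $Q(\psi(d_{\xi_1}),\dots)=R(\phi(\cdot))$ as pseudoclopens, applying that $\phi$ is an isomorphism to $R$ gives $R(\phi(\cdot))^+=\emptyset\iff R(\phi(\cdot))^-=\emptyset\iff R(\cdot)=\emptyset$ in $\clop(L)$, and, by independence of the $d_\xi$ and surjectivity of $g$, the latter holds exactly when $Q$ is empty on the free generators of $\clop(2^{\omega_1})$. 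Hence $2^{\omega_1}$ is a Boolean image of $K$.

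The one genuinely delicate point is the bookkeeping behind ``$Q(\psi(d_{\xi_1}),\dots)=R(\phi(\cdot))$ as pseudoclopens'', i.e.\ that the pseudoclopen operations commute with substitution of Boolean polynomials; this is exactly what the inductive definition of $P(a_1,\dots,a_n)$ in Section~1 is designed to give, but it needs to be said carefully. It can be bypassed altogether by inspecting the proof of Theorem~\ref{alternative}: in its case (ii) the surjection onto $2^{\omega_1}$ comes from the characteristic functions of an \emph{uncountable independent subfamily} $\cY\sub\cG$ (cut down to size $\omega_1$). Then $\phi\uhr\cY$ is at once an isomorphism of $\cY$ with pseudoclopens of $K$, and since $\cY$ is independent in $\clop(L)$ its pattern of empty Boolean combinations is the same there as in $\clop(2^{\cY})$; thus $\cY$ witnesses that $2^{\cY}$, hence $2^{\omega_1}$, is a Boolean image of $K$ with no computation. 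I would present this shorter route and invoke the first argument only to explain why the reduction works for an arbitrary $2^{\omega_1}$-quotient.
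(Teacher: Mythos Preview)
Your proof is correct but follows a genuinely different route from the paper's. The paper argues via Proposition~\ref{cios}: if a Boolean image $L$ of $K$ maps onto $2^{\omega_1}$, then some closed $K_0\subset K$ maps continuously onto $2^{\omega_1}$, hence onto $[0,1]^{\omega_1}$, and Tietze's extension theorem then gives a surjection $K\to[0,1]^{\omega_1}$. This is a short topological argument with no further pseudoclopen bookkeeping.

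Your approach stays inside the Boolean-image formalism, showing that $2^{\omega_1}$ is itself a Boolean image of $K$ and then invoking the remark following the corollary that $2^\Gamma$ is a Boolean image of $[0,1]^\Gamma$. Your ``shorter route'' --- extracting the uncountable independent $\cY\subset\cG$ directly from case~(ii) of the proof of Theorem~\ref{alternative} and noting that $\phi\uhr\cY$ witnesses $2^{\cY}$ as a Boolean image of $K$ --- is clean and entirely correct. Your ``longer route'' is more ambitious than you may realise: the argument you sketch (pull back clopens along $g$, represent them as polynomials in $\cG$, push forward via $\phi$, and use a substitution lemma for the inductively defined pseudoclopen polynomials) is not specific to $2^{\omega_1}$ and, taken at face value, would give a positive answer to Problem~\ref{bf:problems}(ii), which the paper leaves open. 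The substitution lemma you need does appear to hold by a routine induction on the complexity of $Q$, so this is worth writing out carefully; but for the corollary at hand, the paper's Tietze argument and your shorter route are both quicker and self-contained.
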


\begin{proof}
Otherwise, $K$ has a Boolean image that maps onto $2^{\omega_1}$. By Proposition~\ref{cios}, this implies that $K$ has a closed subspace that maps continuously onto $2^{\omega_1}$, hence also onto $[0,1]^{\omega_1}$. By Tietze's extension theorem, $K$ itself maps continuously onto $[0,1]^{\omega_1}$.
\end{proof}

\begin{cor}
Let $K$ be a  connected compactum that is separable and does not map continuously onto $[0,1]^{\omega_1}$. Then every Boolean image of $K$ is metrizable.
\end{cor}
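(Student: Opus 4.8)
The plan is to combine the previous corollary with a standard fact about separable Corson compacta. First I would invoke the preceding corollary: since $K$ is connected it is in particular separably connected, and $K$ does not map continuously onto $[0,1]^{\omega_1}$, so every Boolean image of $K$ is Corson compact. Thus, if $L$ is any Boolean image of $K$, then $L$ is zero-dimensional and Corson compact; it remains only to argue that $L$ is metrizable.

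The key step is to show that $L$ is separable. By Proposition~\ref{cios}, $L$ is a continuous image of a closed subspace $K_0$ of $K$. Since $K$ is separable and $K_0$ is closed in $K$, one might worry that separability need not pass to closed subspaces of an arbitrary separable space; however, the relevant point is that $L$, being a continuous image of a closed subspace of $K$, has $\pi$-weight (indeed density) controlled in the right way. Actually the cleanest route is: $L$ is a continuous image of a closed subspace of $K$, hence $L$ is a continuous image of a subspace of the separable space $K$, so $L$ has a dense subset of cardinality at most the density of $K$, which is $\aleph_0$ --- here I would instead argue via the countable $\pi$-weight of $K$ (which follows from separability) together with the fact that $\pi$-weight does not increase under continuous maps onto, applied to $f:K_0\to L$, noting that $\pi$-weight of a closed subspace of $K$ is at most that of $K$. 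Either way, $L$ is separable.

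Finally I would quote the classical theorem that a separable Corson compact space is metrizable (this follows because a Corson compact space has a point-countable $T_0$-separating family of open $F_\sigma$ sets, and together with separability this forces a countable network, hence metrizability; alternatively, Corson compacta are monolithic, so a separable Corson compactum has countable weight). Since $L$ is both zero-dimensional and separable Corson compact, $L$ is metrizable, completing the proof.

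The main obstacle I anticipate is the separability transfer: being careful that the passage ``closed subspace of a separable space need not be separable'' does not derail the argument, and instead phrasing the step in terms of $\pi$-weight (which a separable space has countable, and which cannot increase under a continuous surjection and cannot increase when passing to a closed subspace) so that $L$ genuinely has countable $\pi$-weight and hence, being Corson compact, countable weight. Everything else is an immediate appeal to the previous corollary and to the standard characterization of metrizability among separable Corson compacta.
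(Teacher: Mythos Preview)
Your argument has a genuine gap in the separability step. The assertion that the $\pi$-weight of a closed subspace of $K$ is bounded by the $\pi$-weight of $K$ is false: $\beta\omega$ is separable (hence has countable $\pi$-weight), yet its closed subspace $\omega^{*}=\beta\omega\setminus\omega$ has uncountable $\pi$-weight. So neither density nor $\pi$-weight passes down to closed subspaces in the way you need, and Proposition~\ref{cios} by itself does not yield that $L$ is separable. (A secondary slip: ``connected'' alone does not imply ``separably connected''; here you must also use that $K$ is separable, so $K$ itself is a separable connected subspace containing any two points.)

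The paper's proof gets separability of $L$ by a different route that avoids this obstacle entirely: since $K$ is separable it is a continuous image of $\beta\omega$, and then Proposition~\ref{preimage} (not Proposition~\ref{cios}) shows that $L$ is a Boolean image of $\beta\omega$; as $\beta\omega$ is zero-dimensional, Remark~\ref{basic_remark} gives that $L$ is a continuous image of $\beta\omega$, hence separable. The rest of your outline (Corson via the preceding corollary, and metrizability of separable Corson compacta) matches the paper.
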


\begin{proof}
Let $L$ be a Boolean image of $K$. By Proposition \ref{cios}, $L$ is a continuous image of some closed subspace $K_0\sub K$.
Note that a continuous surjection $g:L\To 2^{\omega_1}$ would give a continuous surjection $K_0\To 2^{\omega_1}$, and consequently,
a continuous surjection $K\To [0,1]^{\omega_1}$, which is excluded by the assumption.
By Theorem~\ref{alternative}, $L$ is Corson compact.

On the other hand,  $K$ is separable, so it is a continuous image of $\beta\omega$. Hence by Proposition~\ref{preimage}, $L$ is separable.
Every separable Corson compact space is metrizable, and we are done.
\end{proof}

\begin{remark}
The spaces such as the split interval or $[0,\omega_1]$ are natural examples of zero-dimensional compacta that are Boolean images of
a compact connected spaces (by Corollary \ref{wn:4})
but are not Boolean images of compact separably connected spaces.
\end{remark}

Theorem~\ref{alternative} does not provide a definite answer about which zero-dimensional compact spaces are Boolean images of separably connected or path-connected compact sets. It is even unclear which Corson compact spaces are Boolean images of such kind. Let us consider a few more examples to add to the picture.

An adequate family is a family $\mathcal{A}$ of subsets of a set $\Gamma$ such that a set $A$ belongs to $\mathcal{A}$ if and only if all finite subsets of $A$ belong to $\mathcal{A}$. Such a family may be seen as  a compact space  $L=\{x\in 2^\Gamma: \{\gamma:x_\gamma=1\}\in\cA\}$.

\begin{prop}
Every space $L$ defined by an adequate family is a Boolean image of a path-connected compact space.
\end{prop}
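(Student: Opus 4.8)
The plan is to present $L$ as a Boolean image of the ``truncated cone''
$$K=\Big\{y\in[0,1]^\Gamma : \supp(y)\in\cA\Big\},\qquad\text{where}\quad\supp(y)=\{\gamma\in\Gamma:y_\gamma\neq 0\}.$$
On the side of $L$ we use the family $\cG=\{F_\gamma:\gamma\in\Gamma\}$, $F_\gamma=\{x\in L:x_\gamma=1\}$, which consists of clopen sets and separates the points of $L$. On the side of $K$ we set $C_\gamma=\{y\in K:y_\gamma=1\}$ and $V_\gamma=\{y\in K:y_\gamma>0\}$; since $(0,1]$ is open in $[0,1]$, the set $V_\gamma$ is open in $K$, $C_\gamma$ is closed, and $C_\gamma\sub V_\gamma$, so $\phi(F_\gamma):=(C_\gamma,V_\gamma)$ is a pseudoclopen of $K$. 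We may assume $\emptyset\in\cA$ (otherwise $L=\emptyset$, which is trivially a Boolean image of a point), so that $0\in L\sub K$.

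First I would verify the two topological properties of $K$. Recall that an adequate family is downward closed: if $B\sub A\in\cA$ then every finite subset of $B$ is a finite subset of $A$, hence lies in $\cA$, hence $B\in\cA$. To see that $K$ is compact it suffices to show that $[0,1]^\Gamma\sm K$ is open: if $\supp(y)\notin\cA$, choose a finite $F\sub\supp(y)$ with $F\notin\cA$; then $U=\bigcap_{\gamma\in F}\{z\in[0,1]^\Gamma:z_\gamma>0\}$ is open, contains $y$, and every $z\in U$ has $F\sub\supp(z)$, so $\supp(z)\notin\cA$ by downward closure, i.e.\ $U\cap K=\emptyset$. For path-connectedness, $K$ is star-shaped with centre $0$: for $y\in K$ and $t\in(0,1]$ one has $\supp(ty)=\supp(y)\in\cA$, while $\supp(0)=\emptyset$, so each segment $t\mapsto ty$, $t\in[0,1]$, stays inside $K$.

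It remains to check that $\phi$ is an isomorphism. Discarding those $\gamma$ with $\{\gamma\}\notin\cA$ changes neither $L$ nor $K$ (no member of $\cA$ can contain such a $\gamma$), so we may assume every singleton lies in $\cA$; then $\gamma\mapsto F_\gamma$ is injective and distinct members of $\cG$ correspond to distinct indices. Fix distinct $\gamma_1,\dots,\gamma_n,\delta_1,\dots,\delta_m\in\Gamma$. The crux is the single combinatorial equivalence: each of the three sets
$$\bigcap_{i\le n}F_{\gamma_i}\sm\bigcup_{j\le m}F_{\delta_j},\qquad \bigcap_{i\le n}V_{\gamma_i}\sm\bigcup_{j\le m}C_{\delta_j},\qquad \bigcap_{i\le n}C_{\gamma_i}\sm\bigcup_{j\le m}V_{\delta_j}$$
is nonempty if and only if $\{\gamma_1,\dots,\gamma_n\}\in\cA$. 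For the first set this is immediate from adequacy and downward closure (take the characteristic function of $\{\gamma_1,\dots,\gamma_n\}$, noting that the $\delta_j$ differ from all the $\gamma_i$). For the second and third, if $\{\gamma_1,\dots,\gamma_n\}\in\cA$ then the point $y\in K$ with $y_{\gamma_i}=1/2$ (resp.\ $y_{\gamma_i}=1$) for all $i$ and all other coordinates $0$ is a witness; conversely any witness $y$ satisfies $\{\gamma_1,\dots,\gamma_n\}\sub\supp(y)\in\cA$, so $\{\gamma_1,\dots,\gamma_n\}\in\cA$. These three equivalences yield at once both implications required of an isomorphism, so $\phi$ establishes an isomorphism between the point-separating family $\cG\sub\clop(L)$ and a family of pseudoclopens of the path-connected compactum $K$; hence $L$ is a Boolean image of $K$.

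The argument is essentially routine, with no substantial obstacle; the one place that needs a little care is the compactness of $K$, where one must invoke both the downward closure of adequate families and the fact that $(0,1]$ is open in $[0,1]$, together with the harmless reduction that lets us assume every singleton belongs to $\cA$.
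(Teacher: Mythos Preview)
Your proof is correct and follows essentially the same construction as the paper: both build $K=\{y\in[0,1]^\Gamma:\supp(y)\in\cA\}$, observe it is compact and star-shaped about $0$, and use the pseudoclopens with open part $V_\gamma=\{y\in K:y_\gamma>0\}$. The only cosmetic difference is that the paper takes the closed part to be $E_\gamma=\{x\in L:x_\gamma=1\}\subset L\subset K$ rather than your larger $C_\gamma=\{y\in K:y_\gamma=1\}$; either choice works, and your verification of the isomorphism condition is in fact more explicit than the paper's.
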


\begin{proof}
Let $\mathcal{A}$ be an adequate family of subsets of $\Gamma$ and let
 $$L=\{x\in 2^\Gamma: \{\gamma:x_\gamma=1\}\in\cA\}.$$
  Consider the space $K$ defined by
$$K = \{x\in [0,1]^\Gamma : \{\gamma : x_\gamma\neq 0\}\in \mathcal{A}\}.$$
Note that $K$ is closed in $[0,1]^\Gamma$, for if $x\in [0,1]^\Gamma\sm K$ then  $S=\{\gamma : x_\gamma\neq 0\}\notin \mathcal{A}$
so $I\notin\mathcal{A}$ for some finite $I\sub S$, and  it is easy to define a neighbourhood of $x$ that is disjoint from $\mathcal{A}$.
Note that $L\sub K$ and $K$ is path-connected since every $x\in K$ is connected by a segment inside $K$ joining $x$ and $0\in K$.

Let $E_\gamma = \{x\in L: x_\gamma=1\}$ be the basic clopen subset of $L$, and
let $V_\gamma= \{x\in K : x_\gamma>0\}$.
It is easy to check that letting $\phi(E_\gamma) = (E_\gamma, V_\gamma)$
we define an isomorphism between $\{E_\gamma: \gamma\in\Gamma\}$ and the family of pseudoclopens in $K$.
\end{proof}

Another interesting example are tree spaces. Recall that a tree is a partially ordered set $T$ with a minimum element (its root) and in which each initial segment
$\{t\in T : t<s\}$ is well ordered. Let $\bar{T}$ be a family of subsets of $T$ that are either initial segments or full branches in $T$. Then
$\bar{T}$ is a again a tree when ordered by inclusion, it is in a sense the completion of $T$.
The tree space associated to $T$ is the compact subspace of $ 2^T$  made of the characteristic functions of elements of  $\bar{T}$.

\begin{prop}\label{treepath}
Let $T$ be a tree and let $L\sub 2^T$ be the tree space defined by $T$, that is for every $x\in L$ we have $\{t\in T: x_t=1\}\in \bar{T}$.
Then $L$  is a Boolean image of a compact connected space. If all branches of $T$ are countable, then $L$ is a Boolean image of a path-connected space.
\end{prop}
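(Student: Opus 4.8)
The plan is to treat the two assertions separately, the first being quick and the second requiring an explicit construction.

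\medskip
\noindent\emph{The connected case (arbitrary $T$).} Here I would work with the canonical point-separating family $\cG=\{E_t:t\in T\}$ of clopen subsets of $L$, where $E_t=\{x\in L:x_t=1\}$. I claim $\cG$ satisfies the hypothesis of Corollary~\ref{wn:2}: given a finite nonempty $\cF=\{E_{t_1},\ldots,E_{t_n}\}$, pick $t^*$ maximal in $\{t_1,\ldots,t_n\}$ with respect to the tree order and show that $E_{t^*}$ contains no nonempty element of $\la\cF\sm\{E_{t^*}\}\ra$. It is enough to check this on atoms: if some atom $\beta=\bigcap_{j\in S}E_{t_j}\cap\bigcap_{j\notin S\cup\{i^*\}}(L\sm E_{t_j})$ is nonempty, witnessed by $1_A\in L$ with $A\in\bar T$ (an initial segment of $T$), then $A^*=\{s\in A:s\not\ge t^*\}$ is again an initial segment of $T$ — deleting the cone above $t^*$ keeps $A$ a downwards closed chain — and $1_{A^*}$ still witnesses $\beta$, since maximality of $t^*$ guarantees $t_j\not\ge t^*$ for all other $t_j$, so $t_j\in A^*\iff t_j\in A$; but $t^*\notin A^*$, so $\beta\not\sub E_{t^*}$. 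Corollary~\ref{wn:2} then gives that $\cG$ is isomorphic to a family of pseudoclopens in a compact connected space, and since $\cG$ separates the points of $L$ we are done. (One could instead observe that $E_s\sub E_t\iff t\le s$ and $E_s\cap E_t=\emptyset$ for incomparable $s,t$, so $\cG$ is as in Corollary~\ref{wn:3}.)

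\medskip
\noindent\emph{The path-connected case, setup.} Assume now all branches of $T$ are countable, so every $A\in\bar T$ is countable. Define
$$K=\bigl\{x\in[0,1]^T:\ \supp(x)\in\bar T\ \text{ and }\ x_t=1\ \text{for every }t\in\supp(x)\sm\{\max\supp(x)\}\bigr\},$$
with the understanding that if $\supp(x)$ has no maximum then the second clause says $x=1_{\supp(x)}$; equivalently $K=L\cup\{1_{\{s:s<m\}}+\lambda 1_{\{m\}}:m\in T,\ \lambda\in(0,1)\}$. I would first check $K$ is closed in $[0,1]^T$, hence compact: if $x\notin K$ because its support is not a chain, or is not downwards closed, or some non-maximal point of the support has value $<1$, then in each case a basic neighbourhood of $x$ (using two suitable coordinates) misses $K$. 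Clearly $L\sub K$. For the isomorphism put $\phi(E_t)=(\widetilde E_t,V_t)$ with $\widetilde E_t=\{x\in K:x_t=1\}$ (closed in $K$) and $V_t=\{x\in K:x_t>0\}$ (open in $K$, with $\widetilde E_t\sub V_t$). Condition~2 of the isomorphism is automatic because $E_t=\widetilde E_t\cap L=V_t\cap L$, so $\bigcap_iE_{t_i}\sm\bigcup_jE_{r_j}$ (over $L$) is a subset of $\bigcap_i\widetilde E_{t_i}\sm\bigcup_jV_{r_j}$ (over $K$). For the nontrivial half: suppose $\bigcap_iE_{t_i}\sm\bigcup_jE_{r_j}=\emptyset$ in $L$, for distinct $t_i,r_j$. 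If the $t_i$ are not all comparable, then already $\bigcap_iV_{t_i}=\emptyset$ since the support of any $x\in K$ is a chain; otherwise, with $t_{\max}=\max_it_i$, the set $\{s:s\le t_{\max}\}\in\bar T$ witnesses nonemptiness unless some $r_{j_0}\le t_{\max}$, so we may assume the latter. Then for any $x\in\bigcap_iV_{t_i}$, $\supp(x)$ is a downwards closed chain containing $t_{\max}$, hence $r_{j_0}$; as $r_{j_0}<t_{\max}$ ($r_{j_0}$ and $t_{\max}$ are distinct) lies strictly below a point of $\supp(x)$, it is not its maximum, so $x_{r_{j_0}}=1$, i.e.\ $x\in\widetilde E_{r_{j_0}}$. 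Hence $\bigcap_iV_{t_i}\sm\bigcup_j\widetilde E_{r_j}=\emptyset$, and $\phi$ is an isomorphism.

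\medskip
\noindent\emph{Path-connectedness of $K$ — the main obstacle.} This is the heart of the argument and the only place the countable-branch hypothesis is used. I would show every $x\in K$ is joined to $0\in K$ by a path in $K$. Lowering the top coordinate of $x$ to $0$ reduces this to joining each $1_B$ ($B\in\bar T$) to $0$, and I would prove the stronger retraction statement: for initial segments $A\sub B$ of $T$ there is a path in $K$ from $1_B$ to $1_A$ moving only coordinates in $B\sm A$. Proceed by induction on the order type of the well-ordered, countable final segment $B\sm A$. In the successor case, $m=\max B\in B\sm A$; the segment $\mu\mapsto 1_{B\sm\{m\}}+\mu 1_{\{m\}}$ (which lies in $K$) joins $1_B$ to $1_{B\sm\{m\}}$, and the inductive hypothesis applies to $A\sub B\sm\{m\}$. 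In the limit case $B\sm A$ has cofinality $\omega$ — here countability of branches enters — so fix a cofinal sequence $u^{(0)}<u^{(1)}<\cdots$ in $B\sm A$, set $B_n=\{s\in T:s\le u^{(n)}\}\in\bar T$, and note $A\sub B_n$, $B_n\sub B_{n+1}$, $\bigcup_nB_n=B$, while $B_0\sm A$ and each $B_{n+1}\sm B_n$ have strictly smaller order type than $B\sm A$; the inductive hypothesis (and reversal of paths) then yields paths realizing $1_A\rightsquigarrow1_{B_0}\rightsquigarrow1_{B_1}\rightsquigarrow\cdots$, each moving only coordinates inside $B\sm A$. Concatenating these on $[0,1/2]$, $[1/2,3/4]$, $[3/4,7/8]$, $\ldots$ and declaring the value at $1$ to be $1_B=\lim_n1_{B_n}$ gives a path from $1_A$ to $1_B$; the only delicate point is continuity at $1$, which holds because any basic neighbourhood of $1_B$ depends on finitely many coordinates, and all of these are already frozen at their final values ($1$ if in $B$, $0$ otherwise) from some stage $B_N$ on, while the paths past stage $N$ move only coordinates of $B\sm B_N$. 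Taking $A=\emptyset$ we connect each $1_B$ to $0$, so $K$ is path-connected, and by the isomorphism above $L$ is a Boolean image of $K$.
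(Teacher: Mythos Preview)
Your proof is correct and follows essentially the same route as the paper: the same family $\{E_t\}$, the same Corollary~\ref{wn:2}/\ref{wn:3} reduction for the connected case, and the same space $K\subset[0,1]^T$ with pseudoclopens $(\{x:x_t=1\},\{x:x_t>0\})$ for the second part. The one substantive addition is your transfinite-induction argument for path-connectedness of $K$ when all branches are countable --- the paper merely asserts this without proof, so your version actually fills a gap left in the original.
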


\begin{proof}
For every $t\in T$ let $F_t=\{x\in L: x_t=1\}$. Then $\{F_t:t\in T\}$ is a point-separating family of clopen subsets of  $L$.
Since  sets $F_t$ are either comparable or disjoint, the first assertion follows immediately from Corollary \ref{wn:3}.
We may, however, construct the desired connected compactum $K$ directly as follows.

Consider $K\subset [0,1]^T$ the set of all  $z$ that either belong to $L$ or there exists $t\in T$ such that $z_s = 1$ if $s<t$, and $z_s = 0$ if $s\not\leq t$.
Note that  in such a case,  $z_t$ is an arbitrary element of $[0,1]$.
This means that we are creating a path to connect each successor segment to its predecessor. This easily implies that the space $K$ is connected;  it is moreover
path-connected if all branches of $T$ are countable.

For every $t\in T$ put $V_t=\{x\in K: x_t>0\}$. Then $\phi(F_t) = (F_t, V_t)$ defines the required  isomorphism. Indeed,
if
$$\bigcap_{t\in I} V_t\sm \bigcup_{s\in J} F_s\neq\emptyset,$$
for some finite $I,J\sub T$ then

\begin{enumerate}[--]
\item $I$ consists of elements that are comparable in $T$ so $\bigcap_{t\in I} V_t=V_{t_0}$, where $t_0$ is the maximal element of $I$;
\item no element of $J$ is below $t_0$.
\end{enumerate}

Hence, taking $x\in L$ such that $x_t=1$ for $t<t_0$ and $x_t=0$ otherwise, we have $x\in \bigcap_{t\in I} F_t\sm \bigcup_{s\in J} F_s$.
\end{proof}

\section{Boolean images of convex spaces}\label{secconvex}

The result given below is an augmented version of Theorem \ref{decxy}.

\begin{thm}\label{decconvex}
Suppose that $K$ is a convex compact subset of $[0,1]^\Gamma$. Let $\mathcal{G}$ be a family of clopen subsets of $L$ which is isomorphic to a family of pseudoclopens of $K$. Then there is a countable decomposition $\mathcal{G} = \bigcup_{n<\omega}\mathcal{G}_n$ such that for every $x,y\in L$ and for every $n<\omega$ there exists a further finite decomposition $\mathcal{G}_n = \bigcup_{m=0}^n \mathcal{G}_{nm}$ such that for every partition $\mathcal{G}_{nm} = \mathcal{F}\cup \mathcal{H}$, if $$\mathcal{F}\supset \{a\in\mathcal{G}_{nm} : x,y\in a\} \mbox{ and }\mathcal{H}\supset\{a\in \mathcal{G}_{nm} : x,y\not\in a\}$$
 then $\bigcap\mathcal{F} \setminus \bigcup \mathcal{H}\neq\emptyset$.
 In particular,
the family $\cG_{nm}(x,y)$, of those elements of $\cG_{nm}$ that separate $x$ and $y$,  is independent for every $m<\omega$.

\end{thm}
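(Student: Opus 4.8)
The plan is to imitate the proof of Theorem~\ref{decxy}, but to exploit the much stronger geometric structure available when $K$ is convex: instead of only producing a countable decomposition that depends on the pair $x,y$, we want a single countable decomposition $\mathcal{G}=\bigcup_{n<\omega}\mathcal{G}_n$ that works for all pairs simultaneously, paying for this uniformity by a further \emph{finite} refinement that may depend on $x,y$. So I would start by fixing an isomorphism $\phi:\mathcal{G}\To\tilde{\mathcal{G}}$ onto a family of pseudoclopens of $K$, and I would attach to each $a\in\mathcal{G}$ the open ``fringe'' set
$$W(a)=\big(\phi(a)^+\sm\phi(a)^-\big)\cap K\sub[0,1]^\Gamma.$$
For the global decomposition, note that $K$ is separable (being a subset of the separable space $[0,1]^\Gamma$), so fix a countable dense set $D=\{d_n:n<\omega\}$ in $K$ and define $\mathcal{G}_n$ to consist of those $a$ for which $d_n$ is the first point of $D$ lying in $W(a)$ (putting the $a$ with $W(a)=\emptyset$, say, into $\mathcal{G}_0$). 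This is exactly the indexing trick of Theorem~\ref{decxy}, except that here $D$ is chosen once and for all, not inside an $S$ depending on $x,y$.

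Next fix $x,y\in L$ and $n<\omega$. Following the proof of Theorem~\ref{decxy}, pick witnesses
$$x'\in\bigcap\{\phi(a)^-:x\in a\}\sm\bigcup\{\phi(a)^+:x\notin a\},\qquad y'\in\bigcap\{\phi(a)^-:y\in a\}\sm\bigcup\{\phi(a)^+:y\notin a\},$$
both nonempty by the isomorphism property and compactness. Now the key new input is convexity: the whole segment $[x',y']=\{(1-t)x'+ty':t\in[0,1]\}$ lies in $K$. I would use this segment in place of the abstract separable connected $S$. For $a\in\mathcal{G}_n$ with $W(a)\cap[x',y']=\emptyset$, connectedness of the segment forces $[x',y']\sub\phi(a)^-$ or $[x',y']\sub K\sm\phi(a)^+$, and then the same case analysis as in Theorem~\ref{decxy} (using the definitions of $x',y'$ and the hypothesis on $\mathcal{F},\mathcal{H}$) shows $d_n$ behaves correctly with respect to $a$. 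The genuinely new point is the $a\in\mathcal{G}_n$ with $W(a)\cap[x',y']\neq\emptyset$: here I want to control these using the one-dimensional affine structure of the segment. The restriction of $\phi(a)^-$ to $[x',y']$ is a closed subset of the interval and the restriction of $\phi(a)^+$ an open one; since $\phi(a)^-\sub\phi(a)^+$, the trace of $W(a)$ on $[x',y']\cong[0,1]$ is an open subset of $[0,1]$, and I claim that for the ``relevant'' $a$ (those separating $x$ and $y$, equivalently with $x'$ on one side and $y'$ on the other of the pseudoclopen) this open set contains one of the two endpoints $0$ or $1$ in its closure in a controlled way — this is what should yield a partition of $\mathcal{G}_n$ into finitely many, in fact $n+1$, pieces $\mathcal{G}_{n0},\dots,\mathcal{G}_{nn}$. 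The counting ``$n+1$'' must come from the fact that $d_n$ is the $n$-th point of $D$ and only finitely many ($\le n$) earlier points $d_0,\dots,d_{n-1}$ can lie strictly between, so the relevant fringes on the segment, being intervals around $d_n$ refined by these earlier points, fall into at most $n+1$ classes according to which of $d_0,\dots,d_{n-1},d_n$ is the ``innermost'' endpoint of the interval on each side; on each such class the argument of Theorem~\ref{decxy} with $(\star)$ applies verbatim, and the isomorphism property then gives $\bigcap\mathcal{F}\sm\bigcup\mathcal{H}\neq\emptyset$. The ``in particular'' clause is then immediate, exactly as in Theorem~\ref{decxy}: taking $\mathcal{F}\supset\{a:x,y\in a\}$, $\mathcal{H}\supset\{a:x,y\notin a\}$ arbitrary over $\mathcal{G}_{nm}(x,y)$ shows every Boolean combination over $\mathcal{G}_{nm}(x,y)$ is nonempty, i.e.\ this family is independent.

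The main obstacle I anticipate is making precise and correct the finite refinement $\mathcal{G}_n=\bigcup_{m=0}^n\mathcal{G}_{nm}$ and verifying that $n+1$ pieces really suffice. The heuristic above — ``only $d_0,\dots,d_{n-1}$ can interfere, so at most $n+1$ cases on each side of the segment'' — needs to be turned into a genuine classification of the fringes $W(a)\cap[x',y']$ that is (a) independent of which dense point one reaches first, and (b) such that within a class the two-point connectedness argument of $(\star)$ goes through. I expect one does this by: parametrize the segment by $[0,1]$ with $x'\leftrightarrow 0$, $y'\leftrightarrow 1$; for each relevant $a$, the trace of $\phi(a)^-$ on $[0,1]$ is a proper closed subset not containing both $0$ and $1$ (since $a$ separates $x,y$), so its complement is a relatively open set containing an endpoint, and within it lies $d_n$; among the finitely many points $d_0,\dots,d_{n}$ of $D$ hit so far, let $m$ be the index of the one closest to the separating endpoint that still lies in the fringe, and put $a\in\mathcal{G}_{nm}$. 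One then checks that on $\mathcal{G}_{nm}$ the point $d_n$, together with the appropriate witness among $x',y',d_0,\dots,d_{n-1}$, realizes $(\star)$. Once this bookkeeping is set up the rest is a routine repetition of the Theorem~\ref{decxy} argument, so the whole difficulty is concentrated in the combinatorial/geometric description of the finite partition.
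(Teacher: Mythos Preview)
Your overall architecture is right --- imitate Theorem~\ref{decxy}, replace the separable connected $S$ by the segment $[x',y']\subset K$, and seek a global countable decomposition with a pair-dependent finite refinement --- but your implementation of both decompositions is off, and the missing idea is precisely what makes the paper's proof work.

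First, a minor point: your claim that $K$ is separable because ``$[0,1]^\Gamma$ is separable'' is false for $|\Gamma|>\mathfrak c$, so defining $\mathcal G_n$ via a global countable dense set $D\subset K$ does not cover the general statement.

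The real gap, however, is in the finite refinement. Your points $d_0,\dots,d_n$ are fixed elements of $K$; they bear no relation whatsoever to the segment $[x',y']$, which depends on $x,y$. Knowing that $d_n$ lies in the global fringe $W(a)=\phi(a)^+\setminus\phi(a)^-$ tells you nothing about the trace $W(a)\cap[x',y']$, so the heuristic ``only $d_0,\dots,d_{n-1}$ can interfere on the segment'' and the classification ``closest to the separating endpoint that still lies in the fringe'' have no content. You are in effect asking the segment to pass near preassigned points of $K$, which it need not do.

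The paper's idea is to make the countable decomposition carry \emph{metric} rather than density information. With $\rho(u,v)=\sup_{\gamma}|u_\gamma-v_\gamma|$, set
\[
\mathcal G_n=\bigl\{a\in\mathcal G:\rho\bigl(\phi(a)^-,\,K\setminus\phi(a)^+\bigr)>1/n\bigr\}.
\]
This is the global, pair-independent decomposition. Now fix $x,y$, choose $x',y'$ as you did, and parametrize $S=[x',y']$ by $t\in[0,1]$. Since $\rho(x',y')\le 1$, any two points of $S$ at parameter distance $\le 1/n$ are at $\rho$-distance $\le 1/n$; hence for $a\in\mathcal G_n$ the trace $S\cap(\phi(a)^+\setminus\phi(a)^-)$, if nonempty, is an open subset of $S$ containing a subinterval of parameter-length $>1/n$ (or an initial/final subinterval). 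Therefore a $1/n$-net $\{d_0,\dots,d_n\}\subset S$ of $n+1$ points meets every such trace, and one defines $\mathcal G_{nm}$ by the requirement that $d_m$ lies in that trace whenever it is nonempty. From here the verification of $(\star)$ and the conclusion go through exactly as in Theorem~\ref{decxy}. The crucial point you were missing is that the finite net must be chosen \emph{on the segment} after $x,y$ are fixed, and the reason $n+1$ points suffice is the uniform lower bound on fringe width built into the definition of $\mathcal G_n$.
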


\begin{proof}
Let $\phi:\mathcal{G}\To \tilde{\mathcal{G}}$ be an isomorphism of $\mathcal{G}$ with a family of pseudoclopens of $K$. For $x,y \in [0,1]^\Gamma$, let
$\rho(x,y) = \sup_{\gamma\in\Gamma} |x_\gamma - y_\gamma|$. Note that if $F,H$ are disjoint closed sets in $K$, then
$$\rho(F,H) = \inf\{\rho(x,y) : x\in F, y\in H\}>0.$$
Choose the decomposition $\mathcal{G} = \bigcup_n \mathcal{G}_n$ in such a way that $\rho(\phi(a)^-,K\setminus \phi(a)^+) > 1/n$ for all $a \in \mathcal{G}_n$. Now, fix $n<\omega$ and $x,y\in L$ and pick
$$x' \in \bigcap\{\phi(a)^- : a\in \mathcal{G}, x\in a\}\setminus \bigcup\{\phi(a)^+ : a\in \mathcal{G}, x\not\in a\},$$
$$y' \in \bigcap\{\phi(a)^- : a\in \mathcal{G}, y\in a\}\setminus \bigcup\{\phi(a)^+ : a\in \mathcal{G}, y\not\in a\}.$$
Let $D = \{d_m : m=0,\ldots,n\}$ be a $1/n$-dense subset of the segment
$$S=\{r\cdot x'+(1-r)\cdot y': r\in [0,1],$$
joining $x'$ and $y'$, in the sense that for every $z\in S$ there exists $d_m\in D$ such that $\rho(z,d_m)< 1/n$; we can suppose that $x',y'\in D$.

Consider families $\cG_{nm}$  defined by the condition
$$d_m\in S\cap \left( \phi(a)^+ \setminus \phi(a)^-\right) \mbox{ whenever } a\in \mathcal{G}_{nm} \mbox{ and } S\cap \left(\phi(a)^+ \setminus \phi(a)^-\right) \neq\emptyset.$$
Then $\mathcal{G}_n = \bigcup_{m=1}^{n} \mathcal{G}_{nm}$
because if $a\in \mathcal{G}_n$ and  the set $S\cap \left(\phi(a)^+ \setminus \phi(a)^-\right)$  is nonempty  then it must contain an subinterval of
$S$ which is either initial or final or of length greater than $1/n$. We fix $m\leq n$, and a partition
$\mathcal{G}_{nm} = \mathcal{F}\cup \mathcal{H}$, such that $\mathcal{F}\supset \{a\in\mathcal{G}_{nm} : x,y\in a\}$ and $\mathcal{H}\supset\{a\in \mathcal{G}_{nm} : x,y\not\in a\}$.  Then,
$$(\star)\ \ d_m\in \bigcap\{\phi(a)^+ : a\in \mathcal{F}\} \setminus \bigcup \{\phi(a)^- : a\in \mathcal{H}\},$$
which can be checked following the argument appearing the the proof of Theorem \ref{decxy}.
Likewise, using compactness and the fact that $\phi$ is an isomorphism, from $(\star)$ we deduce that $\mathcal{F}\setminus \mathcal{H} \neq\emptyset$.
\end{proof}

With the help of Theorem~\ref{decconvex} we shall give an example of a zero-dimensional space which is the Boolean image of a path-connected space but
cannot be a Boolean image  of  a convex space.

Let $L\sub 2^\Gamma$ be a compact space. Recall that every $a\in\clop(L)$ depends on a finite number of coordinates, that is there is a finite
set $F\sub\Gamma$ such that for every $x\in a$ and $y\in L$, if $x_\gamma=y_\gamma$ for all $\gamma\in F$ then $y\in a$.

The space we are going to discuss is defined by some tree $T$ (see Proposition \ref{treepath}); recall that we mean
$$L=\{x\in 2^T: \{t\in T: x_t=1\}\in \bar{T}\}\sub 2^T.$$

We need the following two general observations.

\begin{lemma}\label{5:2}
Let $L\sub 2^T$ be the space defined by a tree $T$.
Suppose that $(a_n)_n$ is a sequence in $\clop(L)$ such that every $a_n$ depends on coordinates in $F_n\sub T$, where
$|F_n|\le k$, and $k$ is  a fixed natural number.

Then the sequence $(a_n)_n$ is not independent.
\end{lemma}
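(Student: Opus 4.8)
The plan is to exhibit, among the first finitely many terms of the sequence, an explicit Boolean combination that vanishes; recall that to prove $(a_n)_n$ is not independent it is enough to find some $m$ and some sign pattern $\epsilon\in 2^m$ for which $\bigcap_{\epsilon_i=1}a_i\cap\bigcap_{\epsilon_i=0}(L\sm a_i)=\emptyset$. The argument is a counting argument, and the place where treeness is genuinely used is the following scarcity phenomenon (without it the lemma is false: in the full cube $2^\Gamma$ the coordinate clopens $E_\gamma=\{x:x_\gamma=1\}$ each depend on a single coordinate and form an independent family).

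First I would record the structural observation. Every $x\in L$ is the characteristic function of a set $A_x=\{t\in T:x_t=1\}\in\bar T$, i.e.\ of a \emph{downward-closed chain} of $T$. Fix a finite $F\sub T$. Then the ``trace'' $A_x\cap F$ is a downward-closed chain of the finite poset $F$, and the restriction $x\uhr F$ is completely determined by it. The key elementary fact is that a finite subposet $F$ of a tree has at most $|F|+1$ downward-closed chains: a nonempty such chain $S$ has a maximum $s\in F$, and since $\{t\in F:t\le s\}$ is contained in the initial segment $\{t\in T:t\le s\}$ it is itself a chain, downward closed in $F$, so from downward closedness of $S$ one gets $S=\{t\in F:t\le s\}$; distinct $s$ give distinct such chains, and with $\emptyset$ this is at most $|F|+1$ in all. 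Verifying this count is the one spot needing a little care (it is what distinguishes ``downward-closed chains'', which are linearly many, from arbitrary downward-closed subsets of $F$, which would be exponentially many), but it is short.

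To conclude, choose $m=m(k)$ with $2^m>mk+1$; such $m$ exists since $k$ is fixed and $2^m/m\to\infty$. Put $F=F_1\cup\dots\cup F_m$, so $|F|\le mk$. Since each $a_i$ depends only on the coordinates in $F_i\sub F$, whether $x\in a_i$ depends only on $x\uhr F$, hence only on the trace $A_x\cap F$; therefore the map $x\mapsto(1_{a_1}(x),\dots,1_{a_m}(x))\in 2^m$ factors through $x\mapsto A_x\cap F$ and so attains at most $|F|+1\le mk+1<2^m$ distinct values. Thus some $\epsilon\in 2^m$ is not attained, which says precisely that the corresponding Boolean combination of $a_1,\dots,a_m$ is empty. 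Hence $(a_n)_n$ is not independent. (Note that no $\Delta$-system or ``type'' bookkeeping on the $F_n$ is needed: the bound $|F|+1$ is uniform and does not depend on how the $F_n$ sit in $T$.)
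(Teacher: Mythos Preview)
Your proof is correct. Both your argument and the paper's are counting arguments: pick $m$ with $2^m$ exceeding a linear function of $mk$, set $F=\bigcup_{i\le m}F_i$, and show that at most that many atoms of $a_1,\dots,a_m$ can be nonempty. The difference is in how the bound is obtained. You bound $|\pi_F[L]|$ directly: since each $A_x=\{t:x_t=1\}$ is a downward-closed chain of $T$, its trace $A_x\cap F$ is a downward-closed chain of the finite poset $F$, and there are at most $|F|+1\le mk+1$ of those. The paper instead produces a hitting set: it proves that the points $x(t)=1_{\{s:s<t\}}$ and $y(t)=1_{\{s:s\le t\}}$ for $t\in F$ (at most $2|F|$ of them) meet every nontrivial clopen depending on $F$, whence at most $2|F|\le 2mk$ atoms are nonempty. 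Your route is marginally tighter and a bit more transparent; the paper's has the small advantage that the witnesses $x(t),y(t)$ are precisely the points used in the application that follows, so the lemma and its use share a common idiom.
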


\begin{proof}
For every $t\in T$ we define $x(t),y(t)\in L$, where $x(t)=1_{\{s: s<t\}}$, $y(t)=1_{\{s: s\leq t\}}$.
\medskip

\noindent {\sc Claim.} Let a set $a\in \clop(L)$ depends on coordinates in $F\sub T$.
If $a\neq\emptyset$ and $a\neq L$ then there is $t\in F$ such that either $x(t)\in a$ or $y(t)\in a$.
\medskip

To prove Claim suppose that the constant function $0\in L$ is not in $a$ and take any $x\in a$. Then for $B=\{t\in T: x_t=1\}\in\bar{T}$ we must have
$B\cap F\neq\emptyset$. Taking $t=\max(B\cap F)$ we get $y(t)\in a$ because $x\in a$ and the points $x$ and $y(t)$ agree on $F$.
In the remaining case, when $0\in a$, take $x\notin a$ and define $B$ as before. Then for $t=\min (B\cap F)$ we have
$x(t)\in a$.
\medskip

Take now $n$ such that $2^n>2nk$. Every atom $b$ of the finite sequence $a_1,\ldots, a_n$ depends on coordinates
in the set $F=\bigcup_{i\le n} F_i$, where $|F|\le k n$.  The set $\{x(t),y(t): t\in F\}$ has at most $2|F|\le 2kn$ points, so
Claim implies that some atom $b$ must be empty.
\end{proof}

 Recall that a subset of a tree is an antichain if it contains no pair of comparable elements in the tree order. A tree is called {\em special}
 if it is a union of countably many antichains.

\begin{lemma}\label{5:3}
Let $T$ be a tree admitting a decomposition $T=\bigcup_n T_n$, where, for every $n$, $T_n$ contains no infinite chain.
Then the tree $T$ is special.
\end{lemma}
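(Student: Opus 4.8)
The plan is to reduce the statement to the known fact that a tree with no infinite chain is special (equivalently, has no infinite chains, so it is automatically a countable union of antichains — this is essentially the statement that such a tree is well-founded and of finite height on each level, but more precisely one uses the standard fact that a tree with no infinite chain is a countable union of antichains, i.e. it is \emph{special} in a strong sense). So the argument is a diagonal assembling of the antichain decompositions of the pieces $T_n$.

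First I would recall the classical lemma: if a tree $S$ has no infinite chain, then $S$ is a union of countably many antichains. This is proved by assigning to each $s\in S$ the ``rank'' $\rho(s)$, the supremum of lengths of chains with top element $s$; since there are no infinite chains this rank is a natural number for each $s$, the fibers $\rho^{-1}(k)$ are antichains (two comparable elements cannot have the same rank), and $S=\bigcup_{k<\omega}\rho^{-1}(k)$. I will state and use this for each $T_n$ separately: write $T_n=\bigcup_{k<\omega} A_{n,k}$ where each $A_{n,k}$ is an antichain in the tree order of $T$ (an antichain in $T_n$ with the induced order is also an antichain in $T$, since comparability is inherited). Then $T=\bigcup_{n<\omega}T_n=\bigcup_{n,k<\omega}A_{n,k}$ is a countable union of antichains, hence $T$ is special.

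The one point that needs a little care, and which I expect to be the main (mild) obstacle, is making sure that the ``antichain in $T_n$'' notion matches ``antichain in $T$'': since $T_n\subset T$ carries the restriction of the tree order, any two elements of $A_{n,k}$ that are comparable in $T$ are comparable in $T_n$, so $A_{n,k}$ being an antichain in $T_n$ already makes it an antichain in $T$. There is no genuine difficulty here beyond keeping the bookkeeping straight; the substantive content is the rank argument showing that a chain-free tree splits into countably many antichains. I would present that rank argument in full and then conclude in one line by re-indexing the double union.

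\begin{proof}
It suffices to show that any tree $S$ with no infinite chain is a countable union of antichains; applying this to each $T_n$ we obtain $T_n=\bigcup_{k<\omega}A_{n,k}$ with each $A_{n,k}$ an antichain of $S=T_n$, hence an antichain of $T$ (comparability in $T$ restricted to $T_n$ is comparability in $T_n$), and then $T=\bigcup_{n<\omega}T_n=\bigcup_{n,k<\omega}A_{n,k}$ exhibits $T$ as a union of countably many antichains, i.e.\ $T$ is special.

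So let $S$ be a tree with no infinite chain. For $s\in S$ let $\rho(s)$ be the supremum of $|C|-1$ over all finite chains $C\sub S$ with $\max C = s$. Since $S$ has no infinite chain and each initial segment $\{t : t<s\}$ is well ordered, this supremum is attained and is a natural number; thus $\rho:S\To\omega$. If $s<s'$ then any chain with top $s$ can be extended by $s'$, so $\rho(s)<\rho(s')$; hence comparable elements receive different values of $\rho$, which means each fiber $\rho^{-1}(k)$ is an antichain. As $S=\bigcup_{k<\omega}\rho^{-1}(k)$, this completes the proof.
\end{proof}
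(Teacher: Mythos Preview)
Your proof is correct and takes essentially the same approach as the paper: the paper defines $T_{nk}$ as the set of $t\in T_n$ for which $k$ is the maximal size of a chain in $T_n$ below $t$, which is exactly your rank function $\rho$ restricted to $T_n$ (up to a shift by one). One small terminological point: $T_n$ need not literally be a tree in the paper's sense (it may lack a root), but your rank argument only uses that $\{t\in T_n : t<s\}$ is a chain, which it inherits from $T$, so nothing is lost.
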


\begin{proof}
Let $T_{nk}$ be the set of all $t\in T_n$ such that  $|\{s\in T_n: s<t\}|=k$.
Then $T=\bigcup_{n,k} T_{nk}$ and every $T_{nk}$ is clearly an antichain.
\end{proof}

The tree $T$ we use below consists of all the subsets of the set of rational numbers $\qu$ that are well-ordered by the usual order on the real line.
The tree order in $T$ is defined by end-extension, that is for $t,s\in T$, we define $t\leq s$  if $t$ is an initial segment of $s$.
Note that  every branch in $T$ is countable.
We shall use the fact that the tree $T$ is not special, see Todorcevic \cite[Corollary 9.9]{Todorders}.

\begin{thm}
Let $T$ be the tree of all well ordered subsets of $\mathbb{Q}$ ordered by end-extension. Then the space $L\sub 2^T$ defined by $T$ is a Boolean image of a path-connected space but it is not a Boolean image of any compact convex set.
\end{thm}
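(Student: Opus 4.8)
The positive part — that $L$ is a Boolean image of a path-connected space — is immediate from Proposition~\ref{treepath}, since every branch of $T$ is countable. So the heart of the matter is the negative part: $L$ is not a Boolean image of any compact convex set. The plan is to argue by contradiction using Theorem~\ref{decconvex}. Suppose $\cG\sub\clop(L)$ is a point-separating family isomorphic to a family of pseudoclopens of a convex compact $K\sub[0,1]^\Gamma$. Apply Theorem~\ref{decconvex} to get a countable decomposition $\cG=\bigcup_n\cG_n$ with the stated refinement property. The key point is that for each $n$ and each pair $x,y\in L$, we get the further finite decomposition $\cG_n=\bigcup_{m\le n}\cG_{nm}$ with $\cG_{nm}(x,y)$ independent.

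\textbf{Extracting a tree decomposition.} Since $\cG$ separates points of $L$, it generates $\clop(L)$, so $\clop(L)$ is (up to isomorphism) the algebra generated by $\cG$, and in particular every clopen that depends on a single coordinate $E_t$ — the generators of $\clop(L)$ coming from the tree structure — is a Boolean combination of finitely many members of $\cG$. The idea now is to push the countable decomposition $\cG=\bigcup_n\cG_n$ down to a countable decomposition of the tree $T$ itself. For each $t\in T$, the generator $E_t\in\clop(L)$ is built from finitely many elements of $\cG$, hence from finitely many of the pieces $\cG_n$; assign $t$ to $T_N$ where $N$ is (say) the least index such that $E_t$ lies in the algebra generated by $\cG_0\cup\cdots\cup\cG_N$ together with — here is where one must be slightly careful — only members of $\bigcup_{i\le N}\cG_i$ that themselves depend on "few" coordinates in a controlled way. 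The cleanest route: fix, for each $t$, a finite $\cF_t\sub\cG$ with $E_t\in\la\cF_t\ra$, and a natural number $k_t=|\cF_t|$; then stratify $T$ by the pair $(\text{which }\cG_n\text{'s meet }\cF_t,\ k_t)$, a countable stratification, giving $T=\bigcup_j T_j$.

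\textbf{Deriving the contradiction via Lemma~\ref{5:2} and non-speciality.} By Lemma~\ref{5:3}, to conclude $T$ is special it suffices to show each piece $T_j$ contains no infinite chain. Suppose some $T_j$ contains an infinite chain $t_1<t_2<\cdots$. All the $E_{t_i}$ are generated by subfamilies of a fixed finite union $\cG_{n_1}\cup\cdots\cup\cG_{n_r}$ of pieces, with a uniform bound on the number of generators used; running Theorem~\ref{decconvex} once more with $x=x(t_i)$, $y=y(t_i)$ (in the notation of Lemma~\ref{5:2}), or more simply, observing that on the relevant finite subfamily the elements $E_{t_i}$ depend on coordinates in sets of uniformly bounded size after the isomorphism is accounted for, Lemma~\ref{5:2} forces the sequence $(E_{t_i})_i$ to be non-independent. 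But along a chain in $T$ the clopens $E_{t_i}$ are strictly decreasing and nonempty with nonempty "gaps", so in fact $(E_{t_i})_i$ — or a suitable derived sequence of atoms — witnesses an infinite independent-like configuration; more carefully, a strictly decreasing chain $E_{t_1}\supsetneq E_{t_2}\supsetneq\cdots$ of clopens in a tree space each depending on one coordinate yields, via Theorem~\ref{decconvex} applied to the endpoints $x(t_i),y(t_i)$, an infinite independent family concentrated in one $\cG_{nm}$, contradicting Lemma~\ref{5:2}'s bound once we note each such $E_{t_i}$ depends on a bounded number of coordinates. Hence each $T_j$ has no infinite chain, so $T$ is special — contradicting Todorcevic's theorem that the tree of well-ordered subsets of $\mathbb{Q}$ is not special. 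Therefore no such $\cG$ and $K$ exist.

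\textbf{Main obstacle.} The delicate step is the bookkeeping in the middle paragraph: one must arrange the countable decomposition of $T$ so that each piece simultaneously (a) uses only boundedly many generators from a fixed finite block of $\cG_n$'s, so that Lemma~\ref{5:2}'s hypothesis "$|F_n|\le k$ fixed" applies, and (b) still exhausts all of $T$. The interplay between "$E_t$ depends on $\le k$ coordinates of $2^T$" and "$\phi(E_t)$ is a Boolean combination of $\le k$ pseudoclopens of $K$" needs to be tracked through the isomorphism; the natural fix is to stratify by both the index set of participating $\cG_n$'s and the number of generators, both countable parameters, and then invoke Theorem~\ref{decconvex}'s finite refinement $\cG_n=\bigcup_{m\le n}\cG_{nm}$ to get genuine independence inside one $\cG_{nm}$, which is exactly the situation Lemma~\ref{5:2} rules out. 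Once that alignment is set up correctly, the rest is a direct assembly of the quoted lemmas.
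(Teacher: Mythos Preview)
Your outline has the right architecture — contradiction via Theorem~\ref{decconvex}, a countable stratification of $T$, Lemma~\ref{5:2} to block independence, Lemma~\ref{5:3} and non-speciality to finish — but the middle step, where the contradiction is actually produced, does not go through as written, and the missing ingredient is exactly the one the paper has to work hardest for.

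The problem is this. Theorem~\ref{decconvex} gives a \emph{uniform} countable decomposition $\cG=\bigcup_n\cG_n$, but the further finite refinement $\cG_n=\bigcup_{m\le n}\cG_{nm}$ depends on the chosen pair $x,y$. So to land infinitely many elements of $\cG$ in a single $\cG_{nm}$ and conclude they are independent, you must first arrange that all of them separate \emph{one fixed} pair $x,z$. Your proposal applies the theorem with the varying pairs $x(t_i),y(t_i)$, which yields a different finite refinement for each $i$ and hence no common $\cG_{nm}$. Relatedly, your stratification tracks $k_t=|\cF_t|$, the number of $\cG$-generators used to express $E_t$; but Lemma~\ref{5:2} needs a uniform bound on the number of \emph{tree coordinates} that each element of the independent family depends on. These are different quantities, and bounding the first does not bound the second. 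Finally, the clopens $E_{t_i}$ themselves form a decreasing chain, so they are trivially non-independent; no contradiction arises from them directly.

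What the paper does instead is work with a single $a_t\in\cG$ that separates $x(t)$ from $y(t)$ and stratify $T$ by four countable parameters: the index $n$ with $a_t\in\cG_n$; the size $k=|F(a_t)|$ of the coordinate-support of $a_t$; the value $\xi=1_{a_t}(x(t))$; and, crucially, a finite set $Q_t\sub\qu$ recording the minima $\min(t\setminus s)$ over comparable pairs $s,t\in F(a_t)$. Fixing $Q$ and trimming an infinite chain so that $[q_0,q_\infty)\cap Q=\emptyset$ forces the key fact that $t_i$ is the \emph{only} element of $F(a_{t_i})$ lying between $t_0$ and $t_\infty$; from this one reads off that every $a_{t_i}$ separates the single pair $x(t_0),\,x(t_\infty)$. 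Only then can Theorem~\ref{decconvex} be invoked with that fixed pair, pigeonholing infinitely many $a_{t_i}$ into one $\cG_{nm}$ where they are independent — and now Lemma~\ref{5:2} applies because $|F(a_{t_i})|=k$ is uniformly bounded. The $Q_t$ device is not mere bookkeeping: it is the mechanism that converts the chain in $T$ into a family of clopens in $\cG$ separating a common pair, and your proposal has no substitute for it.
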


\begin{proof}
The first statement follows from Proposition~\ref{treepath}, so we focus on the second statement.

We argue by contradiction: Let $\mathcal{G}\sub\clop(L)$ be a point-separating family which is isomorphic to a family of pseudoclopens of a convex compact set, so that we have the decomposition $\mathcal{G} = \bigcup_n \mathcal{G}_n$ as in  Theorem~\ref{decconvex}. Given a clopen subset $a$ of $L$,
let $F(a)\subset T$ be the finite set of coordinates on which $a$ depends. For every $t\in T$ there must exist a clopen set $a_t\in \mathcal{G}$ that separates
$x(t)$ from $y(t)$, where $x(t)=1_{\{s: s<t\}}$, $y(t)=1_{\{s: s\leq t\}}$.
We shall consider also the finite set of rational numbers $Q_t$ defined by
$$Q_t = \{\min(t\setminus s) : s \in F(a_t), t\setminus s \neq \emptyset \}$$

For every $n,k<\omega$, finite set $Q\subset \mathbb{Q}$ and $\xi\in\{0,1\}$, let
$$T(n,k,Q,\xi) = \{t\in T : a_t\in \mathcal{G}_n, |F(a_t)|=k, Q_t = Q, 1_{a_t}(x(t)) = \xi\}.$$
\medskip

\noindent {\sc Claim.} For fixed $n$, $Q$ and $\xi$, the set $T(n,k,Q,\xi)$ does not contain any infinite chain.
\medskip

Note first that Claim leads quickly to the desired  contradiction. Indeed, we have
$$T = \bigcup_{n,Q,\xi} T(n,Q,\xi),$$
and once we know that every $T(n,Q,\xi)$ contains no infinite chain,
we conclude form Lemma \ref{5:3} that the tree $T$ is special while, as we mentioned above, this is not the case (\cite[Corollary 9.9]{Todorders}).

Thus we need only to verify the claim above.
Suppose, on the contrary, that we had an infinite chain $t_0<t_1 < \cdots$ inside $T(n,k,Q,\xi)$.
Let $t_\infty = \bigcup_{i<\omega} t_i\in T$; let $q_i = \min(t_{i+1}\setminus t_i)$ and $q_\infty = \sup_i q_i$ (so that
$q_\infty$ is either a real number or $q_\infty=+\infty$).

By removing a finite number of  elements of the chain, we can suppose that $[q_0,q_\infty)\cap Q = \emptyset$.
This implies that, for every $i<\omega$, the following holds
$$(\star) \ \ \left\{s\in F(a_{t_i}) : t_0 \leq s \leq t_\infty\right\} = \{t_i\}.$$
Indeed, note first that $t_i\in F(a_{t_i})$ because $a_{t_i}$ separates $x(t_i)$ from $y(t_i)$.
If we had $s\in F(a_{t_i})$ with $t_0\leq s\leq p_\infty$ and $s\neq t_i$ then, for instance $s<t_i$ and $q=\min(t_i\sm s) \in Q$
is in $[q_0,q_\infty)$, but this was excluded.

For simplicity in notation, we suppose that $\xi=0$, the other case being completely analogous.
In other words, we assume that  $x(t_i)\not\in a_{t_i}$ but $y(t_i)\in a_{t_i}$ for every $i$.

Consider now $x=x(t_0)$ and $z=x(t_\infty)$.
The condition $(\star)$ above implies that $x\notin a_i$ and $z\in a_i$ for every $i$ (by the fact that $a_i$ depends on coordinates in $F(a_i)$).
We take the finite decomposition $\mathcal{G}_n = \bigcup_m \mathcal{G}_{nm}$ of Theorem~\ref{decconvex} associated to this $x$ and $z$.
By passing to a subchain, we can suppose that all $a_{t_i}$ belong to the same piece $\mathcal{G}_{nm}$ of the partition
and by Theorem~\ref{decconvex}, the sequence of $a_{t_i}$ is independent. But this is impossible in view of
Lemma \ref{5:2}. This shows Claim and the proof is complete.
\end{proof}

Finally, we give a result on some class of Boolean images of convex compacta.
Given a natural number $n$, an adequate family $\mathcal{A}$ of sets  is $n$-adequate if a set belongs to $\mathcal{A}$ if and only if all of its subsets of cardinality at most $n$ belong to $\mathcal{A}$.

\begin{prop}\label{5.5}
Every compact space $L$ defined by an $n$-adequate family for some $n$ is a Boolean image of a compact convex set.
\end{prop}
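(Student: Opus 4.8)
\emph{Plan and reduction.}
The idea is to realise $L$ as a Boolean image of a compact convex set assembled from one finite-dimensional simplex for each minimal ``forbidden'' configuration of $\mathcal A$, the simplices being glued along auxiliary real coordinates. Write $L=\{x\in 2^\Gamma:\{\gamma:x_\gamma=1\}\in\mathcal A\}$ and $E_\gamma=\{x\in L:x_\gamma=1\}\in\clop(L)$. Discarding every $\gamma$ with $\{\gamma\}\notin\mathcal A$ (for such $\gamma$, $E_\gamma=\emptyset$, so nothing relevant changes), we may assume all singletons lie in $\mathcal A$; then $\gamma\mapsto E_\gamma$ is injective and $\{E_\gamma:\gamma\in\Gamma\}$ separates the points of $L$. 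Let $\mathcal T$ be the set of minimal members of $2^\Gamma\setminus\mathcal A$; by $n$-adequacy every $t\in\mathcal T$ has $2\le|t|\le n$, and for $A\subseteq\Gamma$ one has $A\in\mathcal A$ iff no $t\in\mathcal T$ is contained in $A$. If $\mathcal T=\emptyset$ then $L=2^\Gamma$ is even a continuous image of $[0,1]^\Gamma$ and we are done, so assume $\mathcal T\neq\emptyset$, whence $n\ge 2$.

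\emph{The convex set and the pseudoclopens.}
For $t\in\mathcal T$ choose a new point $\ast_t$ and let $\Delta_t=\{\nu\in[0,1]^{t\cup\{\ast_t\}}:\sum_v\nu(v)=1\}$. Adjoining coordinates $z_\gamma\in[0,1]$ for $\gamma\in\Gamma$, set
$$K=\Big\{\big((\nu_t)_{t\in\mathcal T},(z_\gamma)_{\gamma\in\Gamma}\big):\ \nu_t\in\Delta_t,\ z_\gamma\in[0,1],\ \nu_t(\gamma)\ge\tfrac{z_\gamma}{|t|-1}\ \text{whenever}\ t\in\mathcal T\ \text{and}\ \gamma\in t\Big\}.$$
Each defining condition is closed and affine or linear, so $K$ is a compact convex subset of a Tikhonov cube. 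Put $a_\gamma=(a_\gamma^-,a_\gamma^+)=\big(\{p\in K:z_\gamma(p)=1\},\ \{p\in K:z_\gamma(p)>\tfrac{n-1}{n}\}\big)$, which is a legitimate pseudoclopen since $\tfrac{n-1}{n}<1$, and define $\phi(E_\gamma)=a_\gamma$; the map $\phi$ is injective because $K$ contains the point whose $z_\gamma$-coordinate is $1$ and whose other $z$-coordinates are $0$.

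\emph{Verification that $\phi$ is an isomorphism.}
Using the reformulation of ``isomorphism'' from Section~1 (put Boolean polynomials in disjunctive normal form) together with the fact that $\bigcap_i E_{\gamma_i}\setminus\bigcup_j E_{\delta_j}$ is nonempty in $L$ exactly when $\{\gamma_1,\dots,\gamma_k\}\in\mathcal A$ (its characteristic function witnesses it), and since $a_\gamma^-\subseteq a_\gamma^+$, it suffices to prove: (I) if $A\subseteq\Gamma$ is finite and $A\notin\mathcal A$ then $\bigcap_{\gamma\in A}a_\gamma^+=\emptyset$; (II) if $A,B\subseteq\Gamma$ are finite, disjoint and $A\in\mathcal A$, then $\bigcap_{\gamma\in A}a_\gamma^-\setminus\bigcup_{\gamma\in B}a_\gamma^+\neq\emptyset$. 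For (I), pick $t\in\mathcal T$ with $t\subseteq A$ and put $k=|t|\le n$; a point $p$ of the intersection satisfies $z_\gamma(p)>\tfrac{n-1}{n}$ for all $\gamma\in t$, so $\sum_{\gamma\in t}\nu_t(\gamma)(p)\ge\sum_{\gamma\in t}\tfrac{z_\gamma(p)}{k-1}>\tfrac{k}{k-1}\cdot\tfrac{n-1}{n}\ge 1$, contradicting $\nu_t(p)\in\Delta_t$. For (II), let $p$ have $z_\gamma(p)=1$ for $\gamma\in A$ and $z_\gamma(p)=0$ otherwise, and for each $t\in\mathcal T$ put $\nu_t(\gamma)(p)=\tfrac{z_\gamma(p)}{|t|-1}$ for $\gamma\in t$ and $\nu_t(\ast_t)(p)=1-\sum_{\gamma\in t}\nu_t(\gamma)(p)$. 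Since $t\not\subseteq A$ forces $|A\cap t|\le|t|-1$, we get $\sum_{\gamma\in t}\tfrac{z_\gamma(p)}{|t|-1}\le 1$, hence $\nu_t(p)\in\Delta_t$ and $p\in K$; clearly $p\in\bigcap_{\gamma\in A}a_\gamma^-$ and $p\notin\bigcup_{\gamma\in B}a_\gamma^+$.

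\emph{Where the difficulty sits.}
The delicate point is to reconcile ``$a_\gamma^+$ open'' with ``$\bigcap_{\gamma\in t}a_\gamma^+=\emptyset$ for forbidden $t$''. A convex subset of $[0,1]^\Gamma$ with coordinatewise pseudoclopens cannot work: convex combinations of admissible $\{0,1\}$-valued points drift into the forbidden region. The plain product $\prod_t\Delta_t$ with pseudoclopens built from $\nu_t(\gamma)$ also fails, because the set one wants to be open is really an intersection over all $t\ni\gamma$, which need not be finite. The auxiliary coordinates $z_\gamma$ together with the linking inequalities $\nu_t(\gamma)\ge z_\gamma/(|t|-1)$ remove both obstructions, and $n$-adequacy enters only through the bound $|t|\le n$, which is exactly what allows the single threshold $\tfrac{n-1}{n}$ to do the job in every simplex $\Delta_t$ simultaneously.
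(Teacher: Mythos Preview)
Your argument is correct, but the paper's own proof is considerably shorter and, amusingly, uses exactly the idea you dismiss in your final paragraph. The paper takes $K$ to be the closed convex hull of $L$ inside $[0,1]^\Gamma$ and sets $\phi(E_\gamma)=(E_\gamma,V_\gamma)$ with $V_\gamma=\{x\in K:x_\gamma>1-1/n\}$. The whole proof then reduces to the claim that for $x\in K$ the set $\{\gamma:x_\gamma>1-1/n\}$ lies in $\mathcal A$, which is verified by a pigeonhole/averaging argument: if $I\notin\mathcal A$ with $|I|\le n$ and $y=\sum r_k x^k$ is a convex combination of points of $L$, then each support misses some $\gamma_k\in I$, so some $\gamma\in I$ collects total weight $\ge 1/|I|\ge 1/n$, forcing $y_\gamma\le 1-1/n$; hence the open set $\bigcap_{\gamma\in I}\{y:y_\gamma>1-1/n\}$ misses $\mathrm{conv}(L)$ and therefore $K$. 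So the ``drift'' you were worried about is real but bounded: with threshold $1-1/n$ it never pushes \emph{all} coordinates of a forbidden $n$-set above the bar simultaneously.

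Your construction, with one simplex $\Delta_t$ per minimal forbidden set and linking inequalities $\nu_t(\gamma)\ge z_\gamma/(|t|-1)$, is a genuinely different and valid route: it makes the obstruction for each $t$ explicit and turns verification (I) into a one-line inequality $\sum_{\gamma\in t}\nu_t(\gamma)>\frac{|t|}{|t|-1}\cdot\frac{n-1}{n}\ge 1$. The cost is a much larger ambient space and more bookkeeping. The paper's approach buys brevity and the pleasant fact that $K$ is canonically attached to $L$; your approach buys a fully explicit convex body given by finitely many linear inequalities per simplex, which might be handy if one wanted to push the construction further (e.g.\ to control metric or dimensional features of $K$). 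Either way, note that both arguments hinge on the same numerical fact $\frac{k}{k-1}\cdot\frac{n-1}{n}\ge 1$ for $k\le n$, so the threshold $(n-1)/n$ is doing the essential work in each.
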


\begin{proof}
Let $\cA$ be an $n$-adequate family of subsets of some $\Gamma$. Recall that
$$L= \{x\in 2^\Gamma: \{\gamma:x_\gamma=1\}\in\cA\}.$$
We define  $K$ to be the closed convex hull of $L$ inside $[0,1]^\Gamma$.
Writing
$$E_\gamma=\{x\in L: x_\gamma=1\},  V_\gamma=\{x\in K: x_\gamma>1-1/n\},$$
we check that the formula  $\phi(E_\gamma) = (E_\gamma, V_\gamma)$ gives the required isomorphism.

\noindent{\sc Claim.}
If $x\in K$ then $A=\{\gamma: x_\gamma>1-1/n\}\in \cA$.
\medskip

Indeed, if $A\notin\cA$ then take $I\sub A$ of size $\le n$ such that $I\notin\cA$. On the other hand, consider any  convex combination
$y=\sum_{k\le m} r_k\cdot x^k$, where $x^1,\ldots x^m\in L$. Then for every $k$ there is $\gamma_k\in I$ such that  $x^k_{\gamma_k}=0$.
Then there is $\gamma\in I$ such that
$$\sum_{\gamma_k=\gamma} r_k\ge 1/n,$$
(since $|I|\le n$). This gives $y_\gamma\le 1-1/n$.
It follows that $x$ is not in the closure of ${\rm conv} (L)$, a contradiction.
\medskip

Checking that $\phi$ is an isomorphism reduces easily  to verifying that for any finite $A\sub \Gamma$,
$$\mbox{ if } \bigcap_{\gamma\in A} V_\gamma\neq\emptyset \mbox{ then }   \bigcap_{\gamma\in A} E_\gamma\neq\emptyset,$$
but this follows directly from  Claim.
\end{proof}


\begin{thebibliography}{9}


\bibitem{donotmap}
A.~Avil{\'e}s, \emph{Compact spaces that do not map onto finite products}, Fund. Math. 202, no.1 (2009), 81--96.

\bibitem{BreKos} C. Brech, P. Koszmider, \emph{An isometrically universal Banach space induced by a non-universal Boolean algebra}, Proc. Amer. Math. Soc. 144 (2016),
 2029--2036.
\bibitem{En} R.\ Engelking, {\em General topology}, Monografie Matematyczne, Tom 60. [Mathematical Monographs, Vol. 60],
 PWN---Polish Scientific Publishers, Warsaw  (1977).

\bibitem{coinitiality} S. Geschke, \emph{The coinitiality of a compact space}, Topology Proceedings 30 (2006),  237--250.
\bibitem{He97} L.\ 	Heindorf, {\em On subalgebras of Boolean interval algebras},
Proc.\ Amer.\ Math.\ Soc.\ 125 (1997),  2265--2274.
\bibitem{Kos} P. Koszmider, \emph{Uncountable equilateral sets in Banach spaces of the form $C(K)$}, preprint, arxiv.org 1503.06356.
\bibitem{Ru01}
M.\ Rudin, {\em Nikiel's conjecture}, Topology Appl.\ 116 (2001),  305--331.
\bibitem{Todorders} S. Todorcevic, \emph{Trees and linearly ordered sets}, Handbook of set-theoretic topology (North-Holland, Amsterdam, 1984) 235--293.
\end{thebibliography}
\end{document}